\newtheorem{theorem}{Theorem}[section]
\newtheorem{lemma}[theorem]{Lemma}
\newtheorem{proposition}[theorem]{Proposition}
\theoremstyle{definition}
\newtheorem{example}[theorem]{Example}
\newtheorem{remark}[theorem]{Remark}
\newtheorem{definition}[theorem]{Definition}
\newtheorem{question}[theorem]{Question}
\numberwithin{equation}{section}
\renewcommand{\author@andify}{%
  \nxandlist {\unskip ,\penalty-1 \space\ignorespaces}%
    {\unskip {} \@@and~}%
    {\unskip \penalty-2 \space \@@and~}%
}
\renewcommand{\andify}{\nxandlist{\unskip, }{\unskip{} \@@and~}{\unskip \space \@@and~}}
\newcommand{\raiseme}[2][.1em]{\def\r@iseval{#1}\mathpalette\r@iseme{#2}}
\def\r@iseme#1#2{\raise\r@iseval\hbox{$\m@th#1#2$}}
\newcommand{\lowerme}[2][.15em]{\def\lowerv@l{#1}\mathpalette\l@werme{#2}}
\def\l@werme#1#2{\lower\lowerv@l\hbox{$\m@th#1#2$}}
\def\rk{{\rm rk}\,}
\def\N{{\mathbb N}}
\newcommand{\T}{\mathcal{T}}					
\newcommand{\I}{\langle \im I\rangle}					
\renewcommand{\S}{\mathcal{S}}		
\newcommand{\op}{{\mkern 1mu\raiseme{*}}}	
\newcommand{\fullTA}[1][A]{\T_{\mkern -2mu\lowerme{#1}}} 
\DeclareMathOperator{\en}{End}
\DeclareMathOperator{\im}{Im}
\let\ker\relax \DeclareMathOperator{\ker}{Ker} 
\newcommand{\EndA}[1][A]{\mathrm{End}(#1)}
\newcommand{\Rho}{\mathrm{P}}				
\newcommand{\Rhot}{\widetilde{\Rho}}			
\newcommand{\RhoS}[1][S]{\Rho_{\mkern -4mu\lowerme{#1}}} 
\newcommand{\Lambdat}{\widetilde{\Lambda}}	
\newcommand{\LambdaS}[1][S]{\Lambda_{\lowerme{#1}}} 
\newcommand{\OmegaS}[1][S]{\Omega_{\lowerme{#1}}} 
\newcommand{\piR}{\pi_{\lowerme{\mkern -1mu\Rho}}} 
\newcommand{\piL}{\pi_{\lowerme{\mkern -3mu\Lambda}}} 
\newcommand{\chiR}{\chi_{\lowerme{\Rho}}}	
\newcommand{\chiL}{\chi_{\mkern -1mu\lowerme{\Lambda}}} 
\newcommand{\chiO}{\chi_{\mkern 1mu\lowerme{\Omega}}} 
\newcommand{\set}[1]{\ensuremath{\left\lbrace #1 \right\rbrace}\xspace}
\newcommand{\clotX}[1][X]{\langle #1 \rangle}
\newcommand{\Card}[1]{\lvert #1 \rvert}
\newcommand{\isom}{\cong}
\begin{document}
\title[Translational hulls of semigroups of  endomorphisms]{Translational hulls of semigroups of  endomorphisms of an  algebra}
\author[V. Gould]{Victoria Gould}
\address{University of York
}
\email{victoria.gould@york.ac.uk}
\author[A. Grau]{Ambroise Grau}
\address{University of York}
\email{ambroise.grau@alumni.york.ac.uk}
\thanks{}
\author[M. Johnson]{Marianne Johnson}
\address{%
	University of Manchester\\
}
\email{Marianne.Johnson@manchester.ac.uk}

\author[M. Kambites]{Mark Kambites}
\address{%
	University of Manchester\\}
\email{Mark.Kambites@manchester.ac.uk}

\date{\today}
\begin{abstract}
We consider the translational hull $\Omega(I)$ of an arbitrary subsemigroup $I$ of an endomorphism monoid $\en(A)$ where $A$ is a universal algebra. We give  conditions for every bi-translation of $I$ to be realised by transformations,  or by endomorphisms, of $A$. We demonstrate that certain of these conditions are also sufficient to provide natural isomorphisms between the translational hull of $I$ and the  idealiser of $I$ within  $\en(A)$,  which in the case where $I$ is an ideal is simply $\en(A)$. We describe the connection between these conditions and work of Petrich and Gluskin  in the context of densely embedded ideals. Where the conditions fail, we develop a methodology to extract information concerning $\Omega(I)$ from the translational hull $\Omega(I/{\approx})$ of a quotient $I/{\approx}$ of $I$. We illustrate these concepts in detail in the cases where $A$ is: a free algebra; an independence algebra; a finite symmetric group.
\end{abstract}

\keywords{Universal algebra, ideal, translational hull, free algebra}
\subjclass[2020]{20M12, 20M25, 20M30}

\maketitle
\section{Introduction}

The  {\em translational hull} $\Omega(I)$ of  a semigroup $I$ is major tool in the theory of ideal extensions in semigroup theory. If $I$ is an ideal of a larger semigroup $U$, then the left and right actions of the elements of $U$ on those of $I$, given by multiplication in the semigroup $U$, are linked. This notion of linked pairs, or bi-translations, which we explain below, may be applied to any pair of actions, or {\em translations} of $I$, leading to the translational hull $\Omega(I)$. 
There is an extensive literature on the nature of $\Omega(I)$, in particular relating it to the theory of  dense extensions and, more generally,  to the theory of  ideal extensions. Dense extensions originate from the work of Lyapin \cite{Lyapin:1953}, whose aim was to abstractly characterise natural semigroups of transformations. This led  Gluskin \cite{Gluskin:1960,Gluskin:1961} to introduce the notion of translational hull. The early work was surveyed by Petrich \cite{Petrich:1970}, who gives a comprehensive list of references up to that time.
Subsequently the thrust has been to investigate translational hulls of semigroups in certain classes, or with additional structure, see, for example, \cite{Guo:2003, Hildebrandt:1976,Schein:1973}.

Our approach is somewhat different. We focus  on the case where $A$ is a (universal) algebra and $I$ a subsemigroup of the endomorphism monoid $\en(A)$ of $A$ and hence of  the full transformation monoid $\fullTA$ of all maps from $A$ to $A$. The context allows us to use the explicit nature of the elements of $I$, $\en(A)$ 
and $\fullTA$ as being maps on the  underlying set $A$, providing extra leverage.
Our study is motivated by the following question:  \emph{when is
$\Omega(I)$  isomorphic to $\en(A)$?} Where the answer is positive, this tells us that $\en(A)$ is determined by its ideal $I$, together with the natural left and right actions of $\en(A)$ on $I$. 

We sharpen this question as follows.
If $\rho$ is a right translation of $I$ satisfying $\alpha\rho=\alpha f$ for all $\alpha\in I$, for some $f$ in an oversemigroup of $I$, then we say $\rho$ is {\em realised} by $f$, and write $\rho=\rho_f$ to denote this. 
Likewise, if $\lambda$ is a left translation of $I$ satisfying $\lambda(\alpha)=f\alpha $ for all $\alpha\in I$ and some fixed $f$ in an oversemigroup of $I$, we say $\lambda$ is {\em realised} by $f$, and write $\lambda=\lambda_f$. If for a pair of left and right translations $(\lambda, \rho)$ there exists $f$ such that $\lambda =\lambda_f$ and $\rho=\rho_f$, then $(\lambda, \rho)=(\lambda_f,\rho_f)$ is a linked pair, and we say $f$ {\em realises} the bi-translation $(\lambda,\rho)$. As above, our main focus  is on  the case where  $I$ a subsemigroup of the endomorphism monoid $\en(A)$ of an algebra $A$. In Theorem~\ref{thm:maps} we give necessary and sufficient conditions for all bi-translations to be realised by transformations of $A$. That is, we determine when $\Omega(I) = \{(\lambda_f, \rho_f): f \in  T(A, I)\}$ where $T(A,I)$ is the idealiser of $I$ in $\fullTA$.  In many natural examples the union of the images of endomorphisms lying in $I$ generate the algebra $A$: in this case we say that $A$ is $I$-representable. Our main result is Theorem~\ref{thm:main}, which shows that $I$-representability, together with an existing condition called $I$-separability,  guarantee that $\Omega(I)\cong S(A,I)$, where $S(A,I)$ is the idealiser of $I$ in $\en(A)$. Often, we will start with $I$ being an ideal of $\en(A)$, so that 
$\en(A)=S(A,I)$.  In the case where $I$ is an ideal of $\en(A)$ and $I$-representability and $I$-separability hold we therefore have the desired outcome that $\Omega(I)\cong \en(A)$.  In the case where $I$-representability holds but $I$-separability does not, we develop a methodology to extract information concerning $\Omega(I)$ from the translational hull $\Omega(I/{\approx})$ of a quotient $I/{\approx}$ of $I$. 

The early catalyst for this study was several questions posed by Stuart Margolis concerning the nature of the translational hulls of ideals of some specific endomorphism monoids. We are able to answer those questions, and more, using the techniques we have developed. In particular, in Subsection~\ref{sub:free} we consider free algebras, in Subsection~\ref{sub:ind} the special case of independence algebras, and in Subsection~\ref{sec:Sn} the (very different case of) a finite symmetric group. 

The paper is structured as follows. In Section~\ref{sec:induced} we give the necessary background and definitions to allow us to frame our questions.  We introduce $I$-representability and $I$-separability, and explain the connections between our conditions, their variations,  and classical notions such   as left and right reductivity.  In Section~\ref{sec:maps} we present conditions for $\Omega(I) = \{(\lambda_f, \rho_f): f \in F\}$ where $F = T(A, I)$ or $S(A, I)$, and consider conditions under which $\Omega(I)$ is in fact isomorphic to $F$. In particular, we  show that the conditions $I$-representability and $I$-separability together are sufficient to yield $\Omega(I) \cong S(A, I)$.  In Section~\ref{sec:nosep} we investigate the situation where the $I$-representability condition holds but the $I$-separability condition does not. In this case we find  congruences $\sim$ on $A$ and 
$\approx$ on $I$ such that $I/{\approx}$ is a semigroup of endomorphisms of $A/{\sim}$  and $A/{\sim}$ has both  $I/{\approx}\,$-representability and 
$I/{\approx}\,$-separability. Further,
there is a natural morphism from $\Omega(I)$ to $\Omega(I/{\approx})$. Finally, in Section~\ref{sec:free algebras}, we look at several examples in detail, and apply the results of the preceding sections. Throughout we raise a number of questions and indicate avenues for future progress. We hope that this article will be a catalyst for new progress in the area of translational hulls.

We aim to make this article as self-contained as possible. By an {\em algebra} we mean a set $A$ together with a collection of finitary operations.
We denote the subalgebra whose elements are the images of the nullary operations of $A$ by $C$; note that $C=\langle\emptyset\rangle$ and $C=\emptyset$ if and only if there are no (basic) nullary operations.
An {\em endomorphism} of  $A$  is a transformation of  $A$ that preserves those operations; and a {\em congruence } on $A$ is an equivalence that is compatible with those operations. We make the convention that  all ideals and subsemigroups  are non-empty. Functions are applied on the right of their arguments, and composed from left to right, except where otherwise stated. We denote by $\mathbb{N}_0$ ($\mathbb{N}$) the set of (strictly) positive integers.
We introduce all remaining notions required in Section 2, making use of standard terminology.  For further background and details we refer to \cite{CP1}, \cite{CP2} and \cite{Howie:1995} for general semigroup theoretic definitions, to \cite{Burris:1981} for an introduction to universal algebra, and to \cite{Petrich:1970} for a detailed survey of results on translational hulls.

\section{Preliminaries}\label{sec:induced}

\subsection{Translations} \label{sub:definitions} We begin by formally defining the translational hull and setting up the notation for certain semigroups associated with the translational hull of a subsemigroup $I$ of a semigroup $U$, and introducing morphisms which will allow us to state our results in full.

For any semigroup $I$, a {\em right translation} of $I$ is a map $\rho: I \rightarrow I$ (applied on the right of its argument) satisfying $(\alpha\beta)\rho = \alpha(\beta \rho)$ for all $\alpha, \beta \in I$.  Likewise, a {\em left translation} of $I$ is a  map  $\lambda: I \rightarrow I$ (applied on the left of its argument) satisfying $\lambda(\alpha\beta) = \lambda(\alpha)\beta$ for all $\alpha, \beta \in I$.
A pair $(\lambda, \rho)$ is said to be a {\em bi-translation} of $I$ if $\lambda $ is a left translation of $I$, $\rho$ is a right translation of $I$ and the pair $(\lambda, \rho)$ is \textit{linked} via $\alpha\lambda(\beta) = (\alpha\rho)\beta$ for all $\alpha, \beta \in I$.

We write $\Rho(I)$ to denote the set of all right translations of $I$, $\Lambda(I)$ to denote the set of all left translations of $I$ and $\Omega(I)$ to denote the set of all bi-translations of $I$. 
It is immediate that $\Rho(I)$, $\Lambda(I)$ and $\Omega(I)$ are submonoids of $\T_I$, $\T_I^\op$ and $\T_I^\op \times \T_I$, respectively, where $\T_I$ is the full transformation semigroup (of right maps) of $I$ and $\T_I^\op$ its dual (of left maps). 
The semigroup $\Omega(I)$ is the {\em translational hull} of $I$. 

Recall that if $I$ is a subsemigroup of a semigroup $U$, then the {\em left idealiser} of $I$ in $U$ is $\{ s\in U: sI\subseteq I\}$, which is easily seen to be the largest subsemigroup of $U$ in which $I$ is a left ideal. The {\em right idealiser} of $I$ in $U$ is defined dually, and the 
{\em idealiser} of $I$ in $U$ is the intersection of left and right idealisers; it is the largest subsemigroup of $U$ in which $I$ is a two-sided ideal.

For $f\in U$ let $\rho_f:I\rightarrow U$ (applied on the right of its argument) and  $\lambda_f:I\rightarrow U$ (applied on the left of its argument) be the maps induced by right and left multiplication with $f$ within $U$, that is, for all $\alpha \in I$ we define $\alpha\rho_f=\alpha f$ and $\lambda_f \alpha =f\alpha$.
Then $\rho_f\in \Rho(I)$ [respectively, $\lambda_f\in \Lambda(I), \, (\lambda_f,\rho_f)\in \Omega(I)$] if and only if $f$ is in the right [respectively, left, (two-sided)] idealiser of $I$
in $U$ and we say that $f$ {\em induces} or, particularly where $U$ is a semigroup of mappings, {\em realises} $\rho_f$ [respectively, $\lambda_f,\, (\lambda_f,\rho_f)$].   
The set of all   right [respectively, left, bi-] translations induced by elements of $V$ where $V$ is any subsemigroup of the idealiser of $I$ in $U$ with $I \subseteq V$ is denoted by $\RhoS[V](I)$ [respectively, $\LambdaS[V](I)$, $\OmegaS[V](I)$].
We  denote the  projection map of $\Omega(I)$ onto the first [respectively, second] coordinate by $\piL$ [respectively,  $\piR$] and denote the image of $\piR$ [respectively,  $\piL$] by 
$\Rhot(I)$ [respectively,  $\Lambdat(I)$].  We have the inclusions
\[\RhoS[I](I)\subseteq \RhoS[V](I)\subseteq \Rhot(I) \subseteq \Rho(I),\qquad \LambdaS[I](I)\subseteq \LambdaS[V](I)\subseteq \Lambdat(I) \subseteq \Lambda(I)  \]
and  \[\OmegaS[I](I)\subseteq \OmegaS[V](I)\subseteq \Omega(I).\]
Note that all the sets mentioned above are semigroups, and $\piR$ and $\piL$ are morphisms.

In the case where  $I$ is an ideal of a semigroup $V$ we also define
 	\[\chiR^V: V\rightarrow \RhoS[V](I),\quad \chiL^V: V\rightarrow \LambdaS[V](I) \quad
 	\mbox{and} \quad \chiO^V:V\rightarrow \OmegaS[V](I)\]
 	by  
 	\[f\chiR^V = \rho_f, \quad f\chiL^V = \lambda_f \quad\mbox{and}\quad f\chiO^V = (\lambda_f,\rho_f). \]

We refer to  $\piR, \piL, \chiR^V, \chiL^V$ and $\chiO^V$ as {\em natural morphisms}. If, for example, $\piR$ is an isomorphism, we say it is a {\em natural isomorphism} and $\Omega(I)$ is {\em naturally isomorphic  to} $\Rhot(I)$.

\subsection{Idealisers for subsemigroups of $\en(A)$}\label{sub:idealisers}

We now focus on the case where $I$ is a semigroup of endomorphisms of an algebra $A$.

\textbf{Notation:} We have observed that $\rho_f$ [respectively, $\lambda_f,\, (\lambda_f,\rho_f)$] is a right [respectively, left, \,bi-] translation if and only if
$f$ is in the right [respectively, left, (two-sided)] idealiser of $I$ in $\fullTA$.  Later it will be convenient to write $T(A,I)$ to denote the (two-sided) idealiser of $I$ in $\fullTA$ and $S(A,I)$ to denote the (two-sided) idealiser of $I$ in $\en(A)$; where these appear as subscripts or superscripts we abbreviate as $T:=T(A,I)$ and $S:=S(A,I)$.

We now record several observations about right/left/bi-translations induced by composition with a transformation of $A$.

\begin{lemma}\label{lem: right_trans}
Let $A$ be an algebra, $I$ a subsemigroup of $\en(A)$, and let $f, g \in \fullTA$. 
\begin{enumerate}
\item We have that $f$ and $g$ induce the same map by right multiplication on $I$ ($\rho_f = \rho_g$) if and only if $f|_{\im\alpha} = g|_{\im\alpha}$ for all $\alpha \in I$.

\item For any $\alpha\in \en(A)$, we have that $\alpha f\in\en(A)$ if and only if 
$f|_{\im\alpha}$ is a morphism.
\item If $I$ is a right ideal of $\en(A)$ with $I^2=I$, then
$\rho_f$ is a right translation of $I$ if and only if $f|_{\im\alpha}$ is a morphism for all $\alpha \in I$.

\item We have that $f$ and $g$ induce the same map by left multiplication on $I$ ($\lambda_f=\lambda_g$) if and only if $(af,ag)\in \ker \alpha$ for all $a\in A, \alpha\in I$.

\item Let $\alpha\in\en(A)$. Then $f\alpha\in\en(A)$ if and only if for any  term $t$ 
\[t(x_1f,\ldots, x_nf)\alpha = t(x_1,\ldots, x_n)f\alpha .\]

\item If $I$ is a left ideal of $\en(A)$ with $I^2=I$, then 
$\lambda_f$ is a left translation of $I$ if and only if for all $\alpha\in I$ and for all terms $t$ 
\[t(x_1f,\ldots, x_nf)\alpha = t(x_1,\ldots, x_n)f\alpha. \]
\end{enumerate}
\end{lemma}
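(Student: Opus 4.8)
The plan is to reduce the statement to part~(5) together with the left ideal structure, dualising the argument for part~(3). First I would note that the defining identity $\lambda_f(\alpha\beta)=\lambda_f(\alpha)\beta$ for a left translation unwinds to $f(\alpha\beta)=(f\alpha)\beta$, which holds automatically by associativity of composition. Hence the only genuine content in ``$\lambda_f$ is a left translation of $I$'' is that $\lambda_f$ maps $I$ into $I$, that is, $f\alpha\in I$ for every $\alpha\in I$. The task thus becomes to show that $f\alpha\in I$ for all $\alpha\in I$ if and only if the displayed term identity holds for all $\alpha\in I$ and all terms $t$.

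For the forward direction, if $\lambda_f$ is a left translation then $f\alpha\in I\subseteq\en(A)$ for each $\alpha\in I$; applying part~(5) to this $\alpha$ yields the term identity at once. This direction uses neither $I^2=I$ nor that $I$ is a left ideal.

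The reverse direction is where the hypotheses do the work, and is the main obstacle. The term identity together with part~(5) gives only $f\alpha\in\en(A)$ for each $\alpha\in I$, whereas we need the stronger conclusion $f\alpha\in I$. To bridge this gap I would exploit $I^2=I$: given $\alpha\in I$, factorise $\alpha=\beta\gamma$ with $\beta,\gamma\in I$, so that $f\alpha=f(\beta\gamma)=(f\beta)\gamma$, where $f\beta\in\en(A)$ by part~(5) applied to $\beta$. Since $\gamma\in I$ and $I$ is a left ideal of $\en(A)$, we obtain $(f\beta)\gamma\in\en(A)I\subseteq I$, whence $f\alpha\in I$, as required. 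Assembling the two directions completes the proof.
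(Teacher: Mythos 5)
Your proof of part (6) is correct and coincides with the paper's intended argument: the paper disposes of (6) with the single line ``This is dual to the argument in (3)'', and your write-up --- reducing the translation identity to $f\alpha\in I$, getting $f\alpha\in\en(A)$ from part (5), then factorising $\alpha=\beta\gamma$ via $I^2=I$ so that $f\alpha=(f\beta)\gamma\in\en(A)I\subseteq I$ by the left ideal property --- is exactly that dual, with part (5) playing the role that part (2) plays in (3). The observation that the forward direction needs neither hypothesis also matches the structure of the paper's proof of (3).
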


\begin{proof} (1) We have that  $\rho_f = \rho_g$ if and only if $\alpha f=\alpha g$ for every $\alpha\in I$. This is equivalent to 
$f|_{\im\alpha} = g|_{\im\alpha}$ for all $\alpha \in I$.
	
(2) Suppose $\alpha f\in\en (A)$ for some  $\alpha \in \en(A)$. Then for any term $t(a_1 \alpha, \ldots, a_n \alpha)$ where $a_1, \ldots, a_n \in A$ and $\alpha \in I$ we have
$$t(a_1 \alpha, \ldots, a_n \alpha)f = t(a_1, \ldots, a_n)\alpha f = t(a_1\alpha f , \ldots, a_n \alpha f),$$
showing that $f$ restricted to the image of $\alpha$ is a morphism. The converse is clear.

(3) One direction has been proved. Suppose then that $I$ is a right ideal of $\en(A)$ with $I^2=I$ and that $f$  restricted to the image of $\alpha$ is a morphism for all $\alpha \in I$. By (2) this is equivalent to $\alpha f\in \en(A)$ for all $\alpha\in I$.  Let $\alpha \in I$ and write
$\alpha=\beta\gamma$ where $\beta,\gamma\in I$. Then  
$$\alpha\rho_f = (\beta \gamma)\rho_f = (\beta\gamma) f=\beta(\gamma f) \in I,$$
since $\beta \in I$, $\gamma f \in \en(A)$ and $I$ is a right ideal of $\en(A)$.

(4) We have that $\lambda_f=\lambda_g$ if and only if $f\alpha= g\alpha$ for all $\alpha\in I$. This is clearly equivalent to the condition that
$(af,ag)\in \ker \alpha$ for all $a\in A, \alpha\in I$. 
	
(5) Let $\alpha\in\en(A)$. Then $f\alpha\in \en(A)$ if and only if  for all terms $t(x_1,\dots, x_n)$ 
\[t(x_1f,\ldots, x_nf)\alpha = t(x_1f\alpha,\ldots, x_nf\alpha)=t(x_1,\ldots, x_n)f\alpha,\]
the first equality holding 
since $\alpha$ is a morphism.

(6) This is dual to the argument in (3).
\end{proof}

For convenience, we state the following, which is readily verified.

\begin{proposition}\label{prop:realise} Let $I$ be a subsemigroup of $\en(A)$, let $S:=S(A, I)$ denote the idealiser of $I$ in $\en(A)$ and let $T:=T(A, I)$ denote the idealiser of $I$ in $\fullTA$.  Then 
\[\OmegaS(I)=\{ (\lambda_f,\rho_f):f\in S(A,I)\}\mbox{ and }\OmegaS[T](I)=\{ (\lambda_f,\rho_f):f\in T(A,I)\}\]
are subsemigroups of $\Omega(I)$ with $\OmegaS(I)\subseteq \OmegaS[T](I)$. 
\end{proposition}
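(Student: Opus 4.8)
The plan is to reduce everything to the definition of $\OmegaS[V](I)$ together with the multiplicativity of the assignment $f\mapsto(\lambda_f,\rho_f)$. First I would check that $S$ and $T$ are legitimate choices for the subscript $V$ in $\OmegaS[V](I)$; that is, that each is a subsemigroup of the idealiser $T=T(A,I)$ of $I$ in $\fullTA$ containing $I$. For $T$ this is immediate, since $T$ is that idealiser and contains $I$ (as $I^2\subseteq I$ makes $I$ a two-sided ideal of itself). For $S$ one observes $S=T\cap\en(A)$: an endomorphism lying in the idealiser of $I$ in $\en(A)$ is in particular a transformation lying in the idealiser of $I$ in $\fullTA$, so $S\subseteq T$, and clearly $I\subseteq S$. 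This legitimises writing $\OmegaS(I)=\OmegaS[S](I)$ and $\OmegaS[T](I)$.

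Second, the two displayed equalities are then essentially definitional. By the characterisation recalled before Lemma~\ref{lem: right_trans}, $(\lambda_f,\rho_f)\in\Omega(I)$ precisely when $f$ lies in the two-sided idealiser of $I$ in $\fullTA$, i.e. when $f\in T$. Hence for every $f\in S\subseteq T$ (respectively $f\in T$) the pair $(\lambda_f,\rho_f)$ is a genuine bi-translation, and the set of bi-translations induced by elements of $S$ (respectively $T$) is exactly $\{(\lambda_f,\rho_f):f\in S\}$ (respectively $\{(\lambda_f,\rho_f):f\in T\}$), which is precisely what $\OmegaS(I)$ and $\OmegaS[T](I)$ denote.

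Third, to see these sets are subsemigroups of $\Omega(I)$ I would verify that $f\mapsto(\lambda_f,\rho_f)$ respects products. A direct computation gives $\alpha(\rho_f\rho_g)=(\alpha f)g=\alpha(fg)$, so $\rho_f\rho_g=\rho_{fg}$ in $\T_I$, while $\lambda_{fg}(\alpha)=f(g\alpha)=\lambda_f(\lambda_g(\alpha))$, which is exactly the product $\lambda_f\lambda_g$ taken in the dual $\T_I^\op$. Thus $(\lambda_f,\rho_f)(\lambda_g,\rho_g)=(\lambda_{fg},\rho_{fg})$ in $\Omega(I)$. Since $S$ and $T$ are closed under composition, $fg$ lies in $S$ (respectively $T$) whenever $f,g$ do, so the product lands back in the relevant set and closure follows. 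Finally, the containment $S\subseteq T$ established in the first step immediately yields $\OmegaS(I)\subseteq\OmegaS[T](I)$, since any $f\in S$ also lies in $T$.

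The only point requiring care — and the reason the statement is merely ``readily verified'' rather than wholly trivial — is pinning down the handedness in the left-hand coordinate: one must confirm that the natural product $\lambda_f\lambda_g$ in $\T_I^\op$ matches $\lambda_{fg}$ rather than $\lambda_{gf}$, so that $f\mapsto(\lambda_f,\rho_f)$ is a \emph{homomorphism} into $\Omega(I)\leq\T_I^\op\times\T_I$ (equivalently, so that $\chiO^S$ and $\chiO^T$ are genuine morphisms). Once the dual convention on $\T_I^\op$ is made explicit this is immediate.
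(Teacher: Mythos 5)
Your verification is correct and is exactly the routine check the paper has in mind (it omits the proof entirely, labelling the statement as ``readily verified''): the equalities are definitional once one notes $S=T\cap\en(A)$ and $I\subseteq S\subseteq T$, and closure follows from $\rho_f\rho_g=\rho_{fg}$ and $\lambda_f\lambda_g=\lambda_{fg}$ together with $S$ and $T$ being subsemigroups. Your remark about checking the handedness in $\T_I^\op$ is a sensible point of care, but there is no divergence from the paper's (implicit) argument.
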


We wish to determine when $\Omega(I)=\OmegaS(I)$ or $\Omega(I)=\OmegaS[T](I)$ and when the morphisms $\chiO^S$ or  $\chiO^T$ are isomorphisms. 

\subsection{Image and kernel conditions and reductivity}
In this subsection we consider two very natural notions on a pair $(A, I)$ where $A$ is an algebra  and $I$ is a subsemigroup of $\en(A)$ relating to the images and kernels of the maps in $I$. We introduce these conditions below, and explain how they relate to existing notions of reductivity.

\begin{definition}\label{defn:i}\label{defn:ikappa} Let $A$ be an algebra and let $I$ be a subsemigroup of $\mathcal{T}_A$. We define
\[\im I=\bigcup_{\alpha\in I}\im\alpha, \mbox{ and } \ker I= \bigcap_{\alpha\in I}\ker\alpha.\] 

The relations $\equiv_{\im}$ and $\equiv_{\ker}$ on $T(A, I)$ are then defined by
\[f \equiv_{\im} g\mbox{ if and only if }f|_{\im I} = g|_{\im I}\]
and
\[f \equiv_{\ker} g\mbox{ if and only if }(af, ag) \in \ker I\mbox{ for all }a \in A.\]
We denote the intersection of $\equiv_{\im}$ and $\equiv_{\ker}$ by $\equiv$.
\end{definition}

\begin{lemma}\label{lem:congruences}   Let $A$ be an algebra and let  $I$ be subsemigroup of $\en(A)$.  Then $\equiv_{\im}, \equiv_{\ker}$  and $\equiv$
are congruences on $T(A, I)$ and hence also on $S(A,I)$.\end{lemma}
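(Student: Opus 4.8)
The plan is to verify directly that each of the three relations is a congruence. The equivalence-relation axioms are immediate from the definitions (noting that $\ker I$, being an intersection of the equivalence relations $\ker\alpha$, is itself an equivalence relation), so the substance lies entirely in compatibility with composition. The key structural fact I would invoke throughout is that, since $T(A,I)$ is by definition the idealiser of $I$ in $\fullTA$, the set $I$ is a two-sided ideal of $T(A,I)$; hence for every $h\in T(A,I)$ and $\alpha\in I$ we have $h\alpha,\alpha h\in I$. In particular, if $x\in\im I$, say $x=a\alpha$ with $a\in A$ and $\alpha\in I$, then $xh=a(\alpha h)\in\im(\alpha h)\subseteq\im I$, so $\im I$ is closed under right composition by elements of $T(A,I)$.

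For $\equiv_{\im}$ I would treat the two sides separately. Suppose $f\equiv_{\im} g$, so that $xf=xg$ for all $x\in\im I$, and let $h\in T(A,I)$. Right compatibility is immediate: for $x\in\im I$ we have $x(fh)=(xf)h=(xg)h=x(gh)$, whence $fh\equiv_{\im} gh$. Left compatibility is where the idealiser property is needed: for $x\in\im I$ we have $x(hf)=(xh)f$, and since $xh\in\im I$ by the closure observation above, $(xh)f=(xh)g=x(hg)$, giving $hf\equiv_{\im} hg$.

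For $\equiv_{\ker}$ the roles are reversed. Suppose $f\equiv_{\ker} g$, so that $(af,ag)\in\ker I$ for all $a\in A$, and let $h\in T(A,I)$. Left compatibility is immediate, since $a(hf)=(ah)f$ and $a(hg)=(ah)g$, and applying the hypothesis to the element $ah\in A$ gives $((ah)f,(ah)g)\in\ker I$. Right compatibility uses the ideal property: to see $fh\equiv_{\ker} gh$ we must check $((af)h,(ag)h)\in\ker\alpha$ for every $a\in A$ and $\alpha\in I$; but $h\alpha\in I$, so from $(af,ag)\in\ker I\subseteq\ker(h\alpha)$ we obtain $(af)(h\alpha)=(ag)(h\alpha)$, that is $(afh)\alpha=(agh)\alpha$, as required.

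Finally, $\equiv=\equiv_{\im}\cap\equiv_{\ker}$ is a congruence as the intersection of two congruences, and since $S(A,I)\subseteq T(A,I)$ is a subsemigroup, restricting each of $\equiv_{\im},\equiv_{\ker},\equiv$ to $S(A,I)$ yields congruences on $S(A,I)$. The only point requiring genuine care---the main, if modest, obstacle---is recognising that exactly two of the four compatibility checks, namely the left compatibility of $\equiv_{\im}$ and the right compatibility of $\equiv_{\ker}$, require the idealiser hypothesis, while the remaining two hold for purely formal reasons; this asymmetry reflects the image/kernel duality between the two relations.
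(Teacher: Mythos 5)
Your proof is correct and follows essentially the same route as the paper's: right compatibility of $\equiv_{\im}$ is formal, left compatibility uses that $\im I$ is closed under elements of the idealiser, and the $\equiv_{\ker}$ case is the dual argument (which the paper only sketches as ``similar''), with $\equiv$ handled as an intersection of congruences. Your explicit identification of which two of the four checks genuinely need the idealiser hypothesis is a nice clarification but does not change the argument.
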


\begin{proof} Suppose that $f, g, h \in T(A, I)$ and that $f \equiv_{\im} g$. It is clear from definition that $fh \equiv_{\im} gh$, since $(fh)|_{\im I} = f|_{\im I}h = g|_{\im I}h = (gh)|_{\im I}$. For each $b \in \im I$ there exists $a \in A$ and $\alpha \in I$ such that $b=a\alpha$. Since $h \in T(A, I)$ we have that $\alpha h \in I$, and hence also that $a\alpha h \in \im I$. Thus $bhf = a \alpha h f = a\alpha h g = bhg$, and since $b$ was arbitrary, we obtain $hf \equiv_{\im} hg$. A similar argument shows that $\equiv_{\ker}$ is also a congruence and so the result for $\equiv$ follows.\end{proof}
 
Let $I$ be a subsemigroup of $\en(A)$.
It follows easily from the definition of $\equiv_{\im}$ that if $\im I = A$, then $\equiv_{\im}$ is equality on both $T(A,I)$ and $S(A, I)$.  Dually, if $\ker I$ is the trivial relation, then $\equiv_{\ker}$ is equality on both $T(A,I)$ and $S(A, I)$.  Moreover, if $\im I$ generates $A$, that is,
$\I=A$, then $\equiv_{\im}$ is easily seen to be equality on $S(A,I)$, recalling that the elements of $S(A, I)$ are morphisms. This motivates the following definitions.

\begin{definition}\label{def:repandsep} Let $A$ be an algebra and $I$ a subsemigroup of  $\en(A)$. 
	\begin{enumerate}
		\item[$\bullet$] We say that $A$ is {\em $I$-representable} if $\I=A$.
		\item[$\bullet$] We say that  $A$ is {\em $I$-separable}  if $\ker I$ is trivial.
\end{enumerate}
\end{definition}
\begin{remark}\label{remark:strongrep}
Note that $I$-representability is a weak version of the \textit{weak transitivity} condition studied in \cite{Petrich:1970} (the latter meaning that $\im I$ is actually
equal to $A$, rather than merely generating $A$), while $I$-separability is referred to in \cite{Petrich:1970} as being {\em separative}.
\end{remark}
It is not hard to see that the $I$-representability and $I$-separability conditions are related to left/right reductivity. The notion of reductivity plays a significant role in earlier studies of translational hulls. We recall the following terminology \cite{Petrich:1970}, augmented for our purposes. 

\begin{definition}\label{defn:reductive}  A subsemigroup $I$ of $U$ is:
	\begin{enumerate}\item[$\bullet$] {\em left $U$-reductive} if for all $\alpha, \beta\in U$
		\[\gamma\alpha = \gamma\beta \mbox{ for all } \gamma \in I \mbox{ implies that }\alpha=\beta;\]
		\item[$\bullet$] {\em right  $U$-reductive} if for all $\alpha, \beta\in U$\[\alpha \gamma= \beta\gamma \mbox{ for all }\gamma \in I \mbox{ implies that } \alpha=\beta;\]
		\item[$\bullet$] {\em weakly $U$-reductive} if for all $\alpha, \beta\in U$ \[\gamma\alpha = \gamma\beta\mbox{ and }\alpha \gamma= \beta\gamma \mbox{ for all } \gamma \in I \mbox{ implies that } \alpha=\beta.\]
	\end{enumerate}
	
	The semigroup $I$ is  {\em left/right/weakly reductive} if it is left/right/weakly $I$-reductive (in which case the oversemigroup $U$ plays no role).  
\end{definition}

Let $I$ be a subsemigroup of a semigroup $U$. Note that it is immediate that if $I$ is left or right $U$-reductive, then it is weakly $U$-reductive. Further, if $I$ is left/right/weakly $U$-reductive
for some oversemigroup $U$, then it is left/right/weakly reductive. It is immediate from the definitions that if $I$ is an ideal
of $U$, then $\chiR^U$ [respectively,  $\chiL^U$] is a natural isomorphism if and only if $I$ is left [respectively,  right] $U$-reductive, and $\chiO^U$ is a natural isomorphism if and only if $I$ is weakly $U$-reductive. 
Further, from \cite{Petrich:1970} we have that if $I$ is left [respectively,  right] reductive, then $\piR$ [respectively,  $\piL$] is injective,  and so $\Omega(I)$ is naturally isomorphic to  $ \Rhot(I)$  [respectively,  $\Lambdat(I)$].

We now indicate how reductivity plays a role in the situation at hand, summarising the connections between the above conditions and the natural isomorphisms they impose.
 
\begin{theorem}\label{cor:rep}
\label{prop:quotient}\label{lem:reductive_quotients}	\label{lem:reductivity from rep or sep} Let $A$ be an algebra and let $I$ be a subsemigroup of $\en(A)$. Let $S:=S(A, I)$ denote the idealiser of $I$ in $\en(A)$ and let $T:=T(A, I)$ denote the idealiser of $I$ in $\fullTA$. Then 
	\[ \OmegaS[T](I)\cong T(A,I)/{\equiv} \quad\mbox{ and }\quad \OmegaS(I)\cong S(A,I)/{\equiv}.\]
	Further:
	\begin{enumerate}
		\item If $I$ is left reductive: $\OmegaS[T](I) \cong T(A, I)/{\equiv_{\im}}$, $\OmegaS(I) \cong S(A, I)/{\equiv_{\im}}$, and $\Rhot(I) \cong\Omega(I)$.
		\item If $I$ is right reductive: $\OmegaS[T](I) \cong T(A, I)/{\equiv_{\ker}}$, $\OmegaS(I) \cong S(A, I)/{\equiv_{\ker}}$, and $\Lambdat(I) \cong \Omega(I)$.
		\item If $U$  is any semigroup such that  $I \subseteq U \subseteq T(A, I)$, then $\equiv_{\im}$ is equality on $U$ if and only if $I$ is left $U$-reductive.
		In particular, if $I$ is left $T(A, I)$-reductive, then $\OmegaS[T](I) \cong T(A, I)$ and $\OmegaS(I) \cong S(A, I)$.
		\item  If $U$ is any semigroup such that  $I \subseteq U \subseteq T(A, I)$, then $\equiv_{\ker}$ is equality on $U$ if and only if $I$ is right $U$-reductive.
		In particular, if $I$ is right $T(A, I)$-reductive, then $\OmegaS[T](I) \cong T(A, I)$ and $\OmegaS(I) \cong S(A, I)$.
		\item If $A$ is $I$-representable, then $I$ is left $\en(A)$-reductive and hence in particular $\OmegaS[T](I) \cong T(A, I)/{\equiv_{\im}}$ and $\OmegaS(I) \cong S(A, I)$.
		\item $A$ is $I$-separable if and only if $I$ is right $\fullTA$-reductive.
		In this case $\OmegaS[T](I) \cong T(A, I)$ and $\OmegaS(I) \cong S(A, I)$.
	\end{enumerate}
	\end{theorem}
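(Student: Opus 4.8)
The plan is to deduce the whole theorem from the two displayed isomorphisms together with the kernel characterisations of $\equiv_{\im}$ and $\equiv_{\ker}$ already set up. Since $T(A,I)$ and $S(A,I)$ are by definition the largest subsemigroups of $\fullTA$ and $\en(A)$ in which $I$ is a two-sided ideal, the natural morphisms $\chiO^T\colon T(A,I)\to\OmegaS[T](I)$ and $\chiO^S\colon S(A,I)\to\OmegaS(I)$ are defined, and by Proposition~\ref{prop:realise} they are surjective. So I would first compute their kernels: for $f,g$ in the relevant idealiser, $(\lambda_f,\rho_f)=(\lambda_g,\rho_g)$ means $\rho_f=\rho_g$ and $\lambda_f=\lambda_g$, and by parts (1) and (4) of Lemma~\ref{lem: right_trans} these say exactly $f\equiv_{\im}g$ and $f\equiv_{\ker}g$. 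Hence the kernel is $\equiv={\equiv_{\im}}\cap{\equiv_{\ker}}$, and the first isomorphism theorem gives $\OmegaS[T](I)\cong T(A,I)/{\equiv}$ and $\OmegaS(I)\cong S(A,I)/{\equiv}$. Every remaining assertion then reduces to deciding which of $\equiv,\equiv_{\im},\equiv_{\ker}$ coincide, or are equality, on $T(A,I)$ and $S(A,I)$.

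I would prove (3) and (4) next, as they drive the rest and require no computation. For $I\subseteq U\subseteq T(A,I)$, left $U$-reductivity asserts that $\gamma f=\gamma g$ for all $\gamma\in I$ forces $f=g$ for $f,g\in U$; but $\gamma f=\gamma\rho_f$, so the hypothesis is simply $\rho_f=\rho_g$, that is $f\equiv_{\im}g$. Thus left $U$-reductivity is literally the statement that $\equiv_{\im}$ is equality on $U$, and dually for (4) with $f\gamma=\lambda_f\gamma$ and $\equiv_{\ker}$. The ``in particular'' clauses follow since reductivity over $T(A,I)$ restricts to $S(A,I)$, and once $\equiv_{\im}$ (resp.\ $\equiv_{\ker}$) is equality on a domain, so is the smaller relation $\equiv$, collapsing the quotients to $T(A,I)$ and $S(A,I)$.

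For (1) and (2) the task is to show that reductivity forces one of $\equiv_{\im},\equiv_{\ker}$ to refine the other; this is the one genuine calculation and it is where the linking condition enters. Assuming $I$ left reductive, take $f\equiv_{\im}g$ in $T(A,I)$, so $\rho_f=\rho_g$. Since $(\lambda_f,\rho_f)$ and $(\lambda_g,\rho_g)$ are bi-translations, the linking identity gives $\gamma\lambda_f(\alpha)=(\gamma\rho_f)\alpha=(\gamma\rho_g)\alpha=\gamma\lambda_g(\alpha)$, i.e.\ $\gamma(f\alpha)=\gamma(g\alpha)$ for all $\gamma,\alpha\in I$; as $f\alpha,g\alpha\in I$, left reductivity yields $f\alpha=g\alpha$, whence $\lambda_f=\lambda_g$ and $f\equiv_{\ker}g$. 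So $\equiv=\equiv_{\im}$ on both $T(A,I)$ and $S(A,I)$, giving the stated quotients; (2) is the exact dual. The assertions $\Rhot(I)\cong\Omega(I)$ and $\Lambdat(I)\cong\Omega(I)$ are just the injectivity of $\piR,\piL$ under left, right reductivity, recorded earlier from \cite{Petrich:1970}.

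Finally I would specialise to the image and kernel conditions. For (5): if $A$ is $I$-representable then two endomorphisms agreeing on $\im I$ agree on $\langle\im I\rangle=A$, so $\gamma f=\gamma g$ for all $\gamma\in I$ forces $f=g$ in $\en(A)$, which is left $\en(A)$-reductivity, hence left reductivity; part (1) then gives $\OmegaS[T](I)\cong T(A,I)/{\equiv_{\im}}$, while part (3) applied to $U=S(A,I)\subseteq\en(A)$ makes $\equiv_{\im}$, and so $\equiv$, equality on $S(A,I)$, giving $\OmegaS(I)\cong S(A,I)$. For (6): $I$-separability says $\ker I$ is trivial, and unwinding definitions, $f\gamma=g\gamma$ for all $\gamma\in I$ means $(af,ag)\in\ker I$ for every $a$, hence $af=ag$ and $f=g$; this is right $\fullTA$-reductivity, and the converse is obtained by feeding any pair $(a,b)\in\ker I$ into the two constant maps onto $a$ and $b$. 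Part (4) then collapses $\equiv_{\ker}$ and $\equiv$ to equality on $T(A,I)\subseteq\fullTA$ and on $S(A,I)$. The main obstacle I anticipate is the asymmetry in (5): $I$-representability controls only \emph{endomorphisms}, so it does not make $\equiv_{\im}$ equality on the full transformation idealiser $T(A,I)$ --- one genuinely obtains only $T(A,I)/{\equiv_{\im}}$ there --- and one must be careful to invoke part (1) for the $T(A,I)$-statement but the stronger part (3) for $S(A,I)$.
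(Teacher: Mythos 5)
Your proposal is correct and follows essentially the same route as the paper: identify $\equiv$ as the kernel of the natural surjections onto $\OmegaS[T](I)$ and $\OmegaS(I)$, read off (3)–(4) as restatements of the definitions, derive (1)–(2) by the associativity/linking computation showing $\equiv_{\im}$ refines $\equiv_{\ker}$ under left reductivity (and dually), and reduce (5)–(6) to reductivity over $\en(A)$ and $\fullTA$ respectively. Your handling of the asymmetry in (5) — that $I$-representability only controls morphisms, so one gets $T(A,I)/{\equiv_{\im}}$ but the full $S(A,I)$ — matches the paper exactly.
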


\begin{proof} The first statement is a matter of observing that
	\[(\lambda_f,\rho_f)=(\lambda_g,\rho_g)\mbox{ if and only if }f \equiv g.\]
	
	(1) Suppose that $I$ is left reductive and $f,g\in T$ are such that $f\equiv_{\im} g$. Then for any $\alpha\in I$ we have
	$\alpha f=\alpha g$. Now for any $\alpha, \beta \in I$ we certainly have
	\[\alpha (f\beta)=(\alpha f)\beta=(\alpha g)\beta=\alpha (g\beta).\]
	But $f\beta, g\beta\in I$ and $I$ is left reductive, so that $f\beta=g\beta$. Since this holds for any
	$\beta\in I$ we have $f\equiv_{\ker} g$. The result now follows from the definition of $\equiv$ and the comments preceding the theorem. 
	
	(2) This is dual to (1). 

(3) and (4) follow immediately from the definitions.

(5) Suppose that $A$ is $I$-representable. Let $\alpha, \beta \in \en(A)$ and suppose that $\gamma\alpha =\gamma\beta$ for all $\gamma \in I$. We aim to show that $\alpha = \beta$. Since $A$ is $I$-representable, we may express each $b \in A$ as $b=t(x_1\alpha_1, \ldots, x_k \alpha_k)$ for some term $t$, $x_i \in A$ and $\alpha_i \in I$. Then
	\begin{eqnarray*}
		b\alpha &=& t(x_1\alpha_1, \ldots, x_k \alpha_k)\alpha\\
		&=& t(x_1\alpha_1\alpha, \ldots, x_k \alpha_k\alpha)\\
		&=& t(x_1\alpha_1\beta, \ldots, x_k \alpha_k\beta)\\
		&=& t(x_1\alpha_1, \ldots, x_k \alpha_k)\beta\\
		&=&b\beta. 
	\end{eqnarray*}
	Since the above argument holds for all $b \in A$, we conclude that $\alpha=\beta$.
	
(6) First suppose that $A$ is $I$-separable. Let $\alpha, \beta \in \fullTA$ and suppose that $\alpha \gamma=\beta\gamma$ for all $\gamma \in I$. We aim to show that $\alpha = \beta$. Suppose not; then there exists $a \in A$ with $a\alpha \neq a\beta$. Since $A$ is $I$-separable there exists $\gamma \in I$ such that $a\alpha\gamma \neq a\beta\gamma$, contradicting the assumption that $\alpha\gamma = \beta\gamma$. Conversely, suppose that $I$ is right $\fullTA$-reductive. If there exist $a\neq b\in A$ such that $a\gamma = b\gamma$ for all $\gamma \in I$, then taking $\alpha$ to be the map sending all elements to $a$ and $\beta$ the map sending all elements to $b$ gives a contradiction. 

\end{proof}

\begin{remark}\label{remark:weaksep}
Part (5) of Theorem~\ref{cor:rep} says that $A$ being $I$-representable is a sufficient condition for $I$ be to left $\en(A)$-reductive. It is straightforward to show that the stronger condition of weak transitivity (see Remark~\ref{remark:strongrep} above) is in fact equivalent to $I$ being left $\T_A$-reductive.

On the other hand, part (6) of the theorem says that $A$ being $I$-separable is equivalent to $I$ being right $\T_A$-reductive. One can naturally define a notion of {\em weak $I$-separability}: namely $A$ is {\em weakly $I$-separable} if for all $\beta,\gamma\in\EndA$, we have that $(a\beta,a\gamma) \in \ker I$  implies  $a\beta=a\gamma$. It is easy to see
that this is a sufficient condition for $I$ to be right $\en(A)$-reductive.
\end{remark}

\section{Translations induced by transformations and endomorphisms}\label{sec:maps}

\subsection{Translations induced by transformations}\label{sub:maps}

We now introduce conditions for right  and left translations that are equivalent to being realised by mappings of $A$. 
These conditions are phrased in terms of  the behaviour of the translations themselves so are not as `pure' as one might wish.  Nevertheless, we put them to good use later in  applications to natural cases.

\begin{definition}\label{defn:balanced} Let $I$ be a subsemigroup of $\en(A)$.
\begin{enumerate}
	\item We say that a right translation $\rho \in \Rho(I)$  is {\em right-balanced} if 
	\[a\alpha=b\beta\mbox{ implies that }a(\alpha\rho)=b(\beta\rho)\]
	for all $a,b\in A$ and $\alpha,\beta\in I$.
	\item We say that a left translation $\lambda \in \Lambda(I)$ is {\em left-balanced} if 
	for all $a\in A$ there exists  $x_a\in A$ such that 
	\[a\lambda(\alpha)=x_a\alpha\mbox{ for all }\alpha\in I.\]
\end{enumerate}
\end{definition}

\begin{proposition}\label{prop:rtranslationsbymaps} Let $I$ be a subsemigroup of $\en(A)$and let $\rho\in \Rho(I)$ and $\lambda \in \Lambda(I)$.
	
\begin{enumerate}
\item   We have $\rho=\rho_f$ for some $f\in\fullTA$ if and only if $\rho$ is right-balanced.
\item We have $\lambda=\lambda_f$ for some $f\in\fullTA$ if and only if $\lambda$ is left-balanced.
\end{enumerate}
\end{proposition}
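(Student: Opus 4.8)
The plan is to prove each biconditional by constructing the required transformation $f\in\fullTA$ in the nontrivial direction and verifying the balance condition directly in the other. For part (1), the forward implication is immediate: if $\rho=\rho_f$ then $\alpha\rho=\alpha f$ for all $\alpha\in I$, so whenever $a\alpha=b\beta$ we obtain $a(\alpha\rho)=(a\alpha)f=(b\beta)f=b(\beta\rho)$, which is exactly right-balancedness. For the converse I would build $f$ on $\im I$ by declaring $(a\alpha)f:=a(\alpha\rho)$ for $a\in A$ and $\alpha\in I$; the right-balanced hypothesis is precisely what guarantees this is well defined, since two representations $a\alpha=b\beta$ of the same element of $\im I$ force $a(\alpha\rho)=b(\beta\rho)$. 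As $\rho$ maps into $I\subseteq\en(A)$, each value $a(\alpha\rho)$ lies in $A$, so $f$ is a genuine map $\im I\to A$; extending it arbitrarily on the complement of $\im I$ produces an element of $\fullTA$, and by construction $a(\alpha f)=(a\alpha)f=a(\alpha\rho)$ for every $a\in A$ and $\alpha\in I$, giving $\alpha f=\alpha\rho$ and hence $\rho_f=\rho$.

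For part (2), the forward direction again simply unwinds definitions: if $\lambda=\lambda_f$ then $\lambda(\alpha)=f\alpha$, so setting $x_a:=af$ yields $a\lambda(\alpha)=(af)\alpha=x_a\alpha$ for all $\alpha\in I$, witnessing left-balancedness. For the converse, left-balancedness supplies for each $a\in A$ some $x_a\in A$ with $a\lambda(\alpha)=x_a\alpha$ for all $\alpha\in I$, and I would define $f\in\fullTA$ by $af:=x_a$ (choosing one such witness per point). Then $a(f\alpha)=(af)\alpha=x_a\alpha=a\lambda(\alpha)$ for all $a\in A$ and $\alpha\in I$, so $f\alpha=\lambda(\alpha)$ and therefore $\lambda_f=\lambda$.

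The only delicate point is the well-definedness of $f$ on $\im I$ in part (1), and this is exactly the content of the right-balanced condition; everything else is a routine check. It is worth recording the structural reason the two conditions take such different shapes. For right translations the value of $f$ is forced on all of $\im I$ — its value on $a\alpha$ must be $a(\alpha\rho)$ — so the hypothesis must rule out clashes between distinct representations of the same point, which is why right-balancedness is phrased as an implication between equalities. For left translations, by contrast, $f$ is specified one point $a$ at a time, and the hypothesis need only guarantee that a single value $x_a$ works simultaneously for every $\alpha\in I$; no consistency constraint linking different points $a$ arises, since these are handled independently.
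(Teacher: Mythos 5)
Your proof is correct and follows essentially the same route as the paper: define $f$ on $\im I$ by $(a\alpha)f = a(\alpha\rho)$, observe that right-balancedness is exactly the well-definedness of this assignment, extend arbitrarily off $\im I$, and for part (2) set $af = x_a$. The closing remarks on why the two balance conditions take different shapes are accurate but not needed for the argument.
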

\begin{proof} 
(1) Clearly, if $\rho=\rho_f$ for some $f\in \fullTA$, then $\rho$ is right-balanced. 

For the converse, suppose that $\rho$ is right-balanced. Recall that $\im I=\bigcup_{\alpha\in I}\im \alpha$. Define $f:A\rightarrow A$ by 
\[xf=a(\alpha\rho) \mbox{ where }x=a\alpha\in \im I, \alpha\in I\]
and where $f$ takes any value for the elements of $A\setminus \im I$. 
 Since $\rho$ is right-balanced, $f$ is well-defined. By definition, $\rho=\rho_f$.
 
(2)  If $\lambda=\lambda_f$ for some $f\in \fullTA$, then putting $x_a=af$ we see that
$\lambda$ is left-balanced.  

Conversely, if $\lambda$ is left-balanced, then setting $af=x_a$ for all $a\in A$ yields $f\in\fullTA$ such that
$\lambda=\lambda_f$.\end{proof}

We note that (2) in Proposition \ref{prop:rtranslationsbymaps}  is essentially a matter of formalism. Nevertheless, the formulation of the notion of being left-balanced is occasionally useful.
\begin{theorem} \label{thm:maps} Let $I$ be a subsemigroup of $\en(A)$. Then
\[\Omega(I)=\{ (\lambda_f,\rho_f): f\in T(A,I)\}\]
if and only if every $\lambda\in \Lambdat(I)$ is left-balanced and every $\rho\in \Rhot(I)$ is right-balanced.
\end{theorem}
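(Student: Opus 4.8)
The plan is to prove the two implications separately, noting that by Proposition~\ref{prop:realise} the right-hand side is exactly $\OmegaS[T](I)$, and that $\OmegaS[T](I)\subseteq\Omega(I)$ always holds. The forward implication is routine and the reverse one carries all the content.

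For the forward direction, suppose $\Omega(I)=\OmegaS[T](I)$. Given $\rho\in\Rhot(I)$, by the definition of $\Rhot(I)$ as the image of $\piR$ there is a bi-translation $(\lambda,\rho)\in\Omega(I)$; by hypothesis $(\lambda,\rho)=(\lambda_f,\rho_f)$ for some $f\in T(A,I)\subseteq\fullTA$, so $\rho=\rho_f$ and Proposition~\ref{prop:rtranslationsbymaps}(1) yields that $\rho$ is right-balanced. The dual argument, using $\piL$ and Proposition~\ref{prop:rtranslationsbymaps}(2), shows every $\lambda\in\Lambdat(I)$ is left-balanced.

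For the reverse direction, take $(\lambda,\rho)\in\Omega(I)$; I must produce a \emph{single} $h\in T(A,I)$ with $\lambda=\lambda_h$ and $\rho=\rho_h$. Here $\rho\in\Rhot(I)$ is right-balanced and $\lambda\in\Lambdat(I)$ is left-balanced, so Proposition~\ref{prop:rtranslationsbymaps} realises each translation by a transformation, but a priori by two different ones. Reconciling them into one map is the main obstacle. I would define $h$ piecewise: on $\im I$ set $(b\beta)h:=b(\beta\rho)$, which is well-defined since $\rho$ is right-balanced (this is the construction used in the proof of Proposition~\ref{prop:rtranslationsbymaps}(1)); and on $A\setminus\im I$ set $ah:=x_a$, where $x_a$ is the element supplied by left-balancedness of $\lambda$, so that $x_a\alpha=a\lambda(\alpha)$ for all $\alpha\in I$.

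It then remains to verify three points. First, $\rho=\rho_h$: for $\alpha\in I$ and $a\in A$ we have $a\alpha\in\im I$, so $(a\alpha)h=a(\alpha\rho)$ by construction, giving $\alpha h=\alpha\rho$. Second, $\lambda=\lambda_h$, that is $(ah)\alpha=a\lambda(\alpha)$ for all $a\in A$ and $\alpha\in I$: for $a\notin\im I$ this is the defining property of $x_a$, while for $a=b\beta\in\im I$ the crucial computation is $(ah)\alpha=(b(\beta\rho))\alpha=b((\beta\rho)\alpha)=b(\beta\lambda(\alpha))=a\lambda(\alpha)$, where the middle equality is exactly the linkedness $(\beta\rho)\alpha=\beta\lambda(\alpha)$ of the bi-translation. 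This is the heart of the argument: linkedness forces the $\rho$-driven values on $\im I$ to be automatically compatible with $\lambda$, so no conflict arises between the two defining conditions. Third, $h\in T(A,I)$, which is immediate since $\alpha h=\alpha\rho\in I$ and $h\alpha=\lambda(\alpha)\in I$ for all $\alpha\in I$. Together these give $(\lambda,\rho)=(\lambda_h,\rho_h)\in\OmegaS[T](I)$, completing the proof.
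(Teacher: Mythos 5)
Your proposal is correct and follows essentially the same route as the paper: the forward direction via Proposition~\ref{prop:rtranslationsbymaps}, and the reverse direction by defining a single transformation piecewise — via $\rho$ on $\im I$ (well-defined by right-balancedness) and via the elements $x_a$ on $A\setminus\im I$ — with linkedness supplying exactly the compatibility $(\beta\rho)\alpha=\beta\lambda(\alpha)$ needed on $\im I$. Your explicit check that $h\in T(A,I)$ is a minor (and welcome) addition that the paper leaves implicit.
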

\begin{proof} 
Clearly, if $\Omega(I) =\{ (\lambda_f,\rho_f): f\in T(A,I)\}$, then by Proposition~\ref{prop:rtranslationsbymaps}, we immediately obtain that every right translation $\rho\in\Rhot(I)$ is right-balanced and every left translation $\lambda\in\Lambdat(I)$ is left-balanced.

Conversely, from Proposition~\ref{prop:realise} we know that 
$\{ (\lambda_f,\rho_f): f\in T(A,I)\}$ is a subsemigroup of $\Omega(I)$. It remains to show that every $(\lambda,\rho)\in \Omega(I)$
may be realised by some $f\in T(A,I)$.

Suppose that every $\lambda\in \widetilde{\Lambda}(I)$ is left-balanced and every $\rho\in \widetilde{\Rho}(I)$ is right-balanced.
Let $(\lambda,\rho)\in \Omega(I)$. We choose $f\in \fullTA$ as follows:
\[a f=x(\alpha\rho)\mbox{ for }a=x\alpha\in \im I, \alpha\in I\]
and for all $a\in A\setminus \im I$,
\[af=x_a \]
where $x_a\in A$ is chosen so that 
\[a\lambda(\alpha)=x_a\alpha\mbox{ for all }\alpha\in I.\]
 By the proof of Proposition \ref{prop:rtranslationsbymaps} we have that $\rho=\rho_f$. Let  $a\in A$. If  
$a=x\alpha$ for some $x\in A,\alpha\in I$, then for all $\beta\in I$,
\[a\lambda(\beta)=(x\alpha)\lambda(\beta)=x(\alpha \lambda(\beta))=x((\alpha\rho)\beta)=x(\alpha\rho)\beta= af\beta.\]
Otherwise,
\[a\lambda(\beta)=x_a\beta=af\beta.\]
Thus $\lambda(\beta)=f\beta$. Consequently,  $(\lambda,\rho)=(\lambda_f,\rho_f)$ and 
$\Omega(I)$ has the required form.\end{proof}

\subsection{Translations induced by morphisms}

We next consider when the translational hull of $I$ is realised by endomorphisms. To this end we make the first connections between $I$-representability
and $I$-separability and the notions of being left-balanced and right-balanced that guarantee realisation by mappings. 

\begin{lemma}\label{lem:lbalanced} Let $A$ be an algebra and let $I$ be a subsemigroup of $\en(A)$. If $A$ is $I$-representable and $(\lambda,\rho)\in\Omega(I)$, then $\lambda$ is left-balanced.
Specifically, if for  each $a\in A$ we choose and fix an expression 
$a=t(x_1\alpha_1,\dots,x_n\alpha_n)$ where $\alpha_i\in I$, $1\leq i\leq n$, then we can take
\[x_a=t(x_1(\alpha_1\rho),\dots,x_n(\alpha_n\rho)).\]
\end{lemma}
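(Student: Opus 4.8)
The plan is to verify directly that the proposed element $x_a$ witnesses left-balancedness, via a short chain of term-rewrites exploiting that every map in sight is a morphism. Fix $a\in A$ together with its chosen expression $a=t(x_1\alpha_1,\dots,x_n\alpha_n)$ with each $\alpha_i\in I$, and fix an arbitrary $\alpha\in I$; the goal is to establish $a\lambda(\alpha)=x_a\alpha$ for the stated $x_a$, which manifestly does not depend on $\alpha$.

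First I would use that $\lambda(\alpha)\in I\subseteq\en(A)$ is a morphism, so that applying it to $a$ distributes over the term $t$:
\[a\lambda(\alpha)=t(x_1\alpha_1,\dots,x_n\alpha_n)\lambda(\alpha)=t\big(x_1\alpha_1\lambda(\alpha),\dots,x_n\alpha_n\lambda(\alpha)\big).\]
The second step is to invoke the linking condition of the bi-translation $(\lambda,\rho)$, namely $\alpha_i\lambda(\alpha)=(\alpha_i\rho)\alpha$ for each $i$ (taking the two entries of the linking identity to be $\alpha_i$ and $\alpha$), so that each argument becomes $x_i(\alpha_i\rho)\alpha$. The third step is to use that $\alpha$ is itself a morphism to pull it back out of the term, giving
\[t\big(x_1(\alpha_1\rho)\alpha,\dots,x_n(\alpha_n\rho)\alpha\big)=t\big(x_1(\alpha_1\rho),\dots,x_n(\alpha_n\rho)\big)\alpha=x_a\alpha,\]
which is exactly the required identity.

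Since $I$-representability, i.e.\ $\langle\im I\rangle=A$, is precisely what guarantees that such an expression $a=t(x_1\alpha_1,\dots,x_n\alpha_n)$ exists for every $a\in A$, this shows $\lambda$ is left-balanced. I do not anticipate a genuine obstacle here: the argument is a three-line computation, and the only points needing care are (i) applying the two endomorphism properties in the correct directions---expanding along $\lambda(\alpha)$ at the start and contracting along $\alpha$ at the end---and (ii) observing that the displayed $x_a$ contains no occurrence of $\alpha$, so that a single $x_a$ serves uniformly for all $\alpha\in I$, as the definition of left-balanced requires.
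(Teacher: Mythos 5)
Your proposal is correct and is essentially identical to the paper's proof: both expand $a\lambda(\alpha)$ over the term using that $\lambda(\alpha)\in I\subseteq\en(A)$ is a morphism, apply the linking identity $\alpha_i\lambda(\alpha)=(\alpha_i\rho)\alpha$, and factor $\alpha$ back out to read off $x_a=t(x_1(\alpha_1\rho),\dots,x_n(\alpha_n\rho))$. Your closing observations about the direction of the two morphism steps and the independence of $x_a$ from $\alpha$ are accurate.
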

\begin{proof}  Let $a\in A$ and write $a=t(x_1\alpha_1,\dots,x_n\alpha_n)\in A$, where $\alpha_i\in I$, $1\leq i\leq n$, making use of $I$-representability.
Then for all $\alpha\in I$ we have 
\[a\lambda(\alpha)=t(x_1\alpha_1,\dots,x_n\alpha_n)\lambda(\alpha)=t(x_1\alpha_1\lambda(\alpha),\dots,x_n\alpha_n\lambda(\alpha))=t(x_1(\alpha_1\rho),\dots,x_n(\alpha_n\rho))\alpha,\]
so the result holds with $x_a=t(x_1(\alpha_1\rho),\dots,x_n(\alpha_n\rho))$.
\end{proof}

\begin{lemma}\label{lem:balanced} Let $A$ be an algebra and let $I$ be a subsemigroup of $\en(A)$. If $A$ is $I$-separable and $(\lambda,\rho)\in\Omega(I)$, then $\rho$ is right-balanced.
\end{lemma}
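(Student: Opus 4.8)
The plan is to establish right-balancedness directly from the definition by exploiting $I$-separability as a cancellation-type criterion: to prove two elements of $A$ are equal, it suffices to check that they remain equal after acting by every $\gamma \in I$. Concretely, suppose $a\alpha = b\beta$ with $a, b \in A$ and $\alpha, \beta \in I$; the goal is to show $a(\alpha\rho) = b(\beta\rho)$. Since $\rho$ maps $I$ into $I$, both $\alpha\rho$ and $\beta\rho$ lie in $I$, so $a(\alpha\rho)$ and $b(\beta\rho)$ are genuine elements of $A$. Because $A$ is $I$-separable, it is enough to verify that $\bigl(a(\alpha\rho)\bigr)\gamma = \bigl(b(\beta\rho)\bigr)\gamma$ for every $\gamma \in I$.

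The key step is to convert the right translation into a left translation using the linking identity. Associativity of composition gives $\bigl(a(\alpha\rho)\bigr)\gamma = a\bigl((\alpha\rho)\gamma\bigr)$, and the linked condition $\alpha\lambda(\gamma) = (\alpha\rho)\gamma$ (with second argument $\gamma$) lets me rewrite this as $a\bigl(\alpha\lambda(\gamma)\bigr) = (a\alpha)\lambda(\gamma)$. The identical computation applied to $b$ and $\beta$ yields $\bigl(b(\beta\rho)\bigr)\gamma = (b\beta)\lambda(\gamma)$. Now the hypothesis $a\alpha = b\beta$ gives $(a\alpha)\lambda(\gamma) = (b\beta)\lambda(\gamma)$ immediately, so the two expressions agree for all $\gamma \in I$.

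Invoking $I$-separability one final time, the equality $\bigl(a(\alpha\rho)\bigr)\gamma = \bigl(b(\beta\rho)\bigr)\gamma$ for all $\gamma \in I$ forces $a(\alpha\rho) = b(\beta\rho)$, which is exactly what right-balancedness of $\rho$ requires. I do not anticipate a genuine obstacle here: the argument is short and the only point needing care is keeping the bookkeeping of left versus right actions straight, in particular applying the linked identity $\alpha\lambda(\gamma) = (\alpha\rho)\gamma$ in the correct direction and using associativity of the (right-applied, left-to-right composed) maps to move $\gamma$ inside the product. The essential idea is that $I$-separability turns the problem into one of comparing elements after testing against $I$, at which point the linking condition reduces the right-balancedness statement to the given equality $a\alpha = b\beta$.
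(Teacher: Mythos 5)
Your proof is correct and is essentially identical to the paper's: both rewrite $\bigl(a(\alpha\rho)\bigr)\gamma$ as $(a\alpha)\lambda(\gamma)$ via the linking identity, use $a\alpha=b\beta$, and conclude by $I$-separability.
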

\begin{proof} Suppose that $a\alpha=b\beta$ for some $a,b\in A$ and $\alpha,\beta\in I$. Then for all $\gamma\in I$ we have
\[a(\alpha\rho)\gamma=a\alpha\lambda(\gamma)=b\beta\lambda(\gamma)=
b(\beta\rho)\gamma\]
so that $I$-separability gives $a\alpha\rho=b\beta\rho$.
\end{proof}

We now show that combining the results of Subsection ~\ref{sub:maps} with Lemma~\ref{lem:lbalanced} provides sufficient conditions on an algebra $A$ and subsemigroup $I$ of $\en(A)$ in order for the translational hull to be realised by mappings. In Section~\ref{sec:free algebras} we shall see that these conditions are satisfied in a number of interesting cases.
\begin{proposition}
	\label{prop:suffmaps}
	Let $A$ be an algebra with generating set $X$ and let $I$ be a subsemigroup of $\en(A)$ such that $I^2=I$. Suppose that for each $c \in \im I:=\bigcup_{\gamma \in I} \im \gamma$ there exists a morphism $\gamma_c \in I$ with the property that $x \gamma_c = c$ for all $x \in X$. Then:
	\begin{enumerate}
		\item every right translation of $I$ is right-balanced;
		\item if $A$ is $I$-representable then \[\Omega(I) = \OmegaS[T](I)= \{(\lambda_f, \rho_f): f \in T(A, I)\} \cong T(A, I)/{\equiv_{\im}}.\]
	\end{enumerate} 
\end{proposition}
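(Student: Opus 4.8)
The plan is to prove part~(1) directly---this is where the hypotheses do the real work---and then to obtain part~(2) by assembling it with results already established in Sections~\ref{sec:induced} and~\ref{sec:maps}. The guiding idea for part~(1) is that, although the definition of right-balanced (Definition~\ref{defn:balanced}) quantifies over arbitrary $a,b\in A$, the hypothesis $I^2=I$ lets us push the ``input points'' into $\im I$, where the constant morphisms $\gamma_c$ become available; one can then exploit the elementary principle that two endomorphisms of $A$ that agree on the generating set $X$ must coincide.

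Concretely, let $\rho\in\Rho(I)$ and suppose $a\alpha=b\beta=c$ with $a,b\in A$ and $\alpha,\beta\in I$. Using $I^2=I$ I would factor $\alpha=\mu\nu$ and $\beta=\sigma\tau$ with $\mu,\nu,\sigma,\tau\in I$. The right-translation identity then gives
\[a(\alpha\rho)=a\bigl(\mu(\nu\rho)\bigr)=(a\mu)(\nu\rho)=p(\nu\rho),\qquad b(\beta\rho)=(b\sigma)(\tau\rho)=q(\tau\rho),\]
where $p:=a\mu\in\im I$ and $q:=b\sigma\in\im I$ satisfy $p\nu=q\tau=c$. Now invoke the hypothesis to pick $\gamma_p,\gamma_q\in I$ with $x\gamma_p=p$ and $x\gamma_q=q$ for all $x\in X$. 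The two endomorphisms $\gamma_p\nu$ and $\gamma_q\tau$ both send every generator $x\in X$ to $c$ (since $(x\gamma_p)\nu=p\nu=c$, and similarly for the other), so they are equal. Applying $\rho$ and using the right-translation identity again yields $\gamma_p(\nu\rho)=\gamma_q(\tau\rho)$; evaluating this equality of endomorphisms at any fixed generator $x_0\in X$ gives $p(\nu\rho)=q(\tau\rho)$, that is, $a(\alpha\rho)=b(\beta\rho)$. This establishes~(1).

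For part~(2), under $I$-representability each $\lambda\in\Lambdat(I)$ arises as the first component of some $(\lambda,\rho)\in\Omega(I)$, so Lemma~\ref{lem:lbalanced} shows it is left-balanced; and by part~(1) every $\rho\in\Rhot(I)$ is right-balanced. Theorem~\ref{thm:maps} then gives $\Omega(I)=\{(\lambda_f,\rho_f):f\in T(A,I)\}$, which equals $\OmegaS[T](I)$ by Proposition~\ref{prop:realise}. Finally, the isomorphism $\OmegaS[T](I)\cong T(A,I)/{\equiv_{\im}}$ is precisely part~(5) of Theorem~\ref{cor:rep}, whose hypothesis ($I$-representability) is in force.

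The main obstacle is the opening move of part~(1): handling general $a,b\in A$ that need not lie in $\im I$, where the constant morphisms are not directly available. The reduction via $I^2=I$ is exactly what brings the relevant points into $\im I$; once there, the fact that a morphism is determined by its restriction to $X$ converts the set-theoretic equation $a\alpha=b\beta$ into an honest equality of endomorphisms to which $\rho$ may legitimately be applied. In writing this up I would double-check that all intermediate elements ($\gamma_p\nu$, $\gamma_q\tau$, $\nu\rho$, $\tau\rho$, and the relevant composites) genuinely lie in $I$, so that every application of $\rho$ and of the right-translation identity is valid.
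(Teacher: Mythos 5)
Your proposal is correct and follows essentially the same route as the paper: factor $\alpha$ and $\beta$ using $I^2=I$ to land in $\im I$, replace the resulting points by the constant morphisms $\gamma_p,\gamma_q$, use agreement on the generating set $X$ to upgrade $a\alpha=b\beta$ to an equality of elements of $I$, apply $\rho$, and evaluate at a generator; part (2) is then the same assembly of Lemma~\ref{lem:lbalanced}, Theorem~\ref{thm:maps}, Proposition~\ref{prop:realise} and Theorem~\ref{cor:rep}. All the composites you flag for checking ($\gamma_p\nu$, $\nu\rho$, etc.) do lie in $I$ since $I$ is a subsemigroup and $\rho$ maps $I$ to $I$, so every application of the translation identity is legitimate.
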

\begin{proof} 		
	(1) We show that any $\rho\in \Rho(I)$ is right-balanced. Let $a,b\in A$ and $\alpha,\beta\in I$ with $a\alpha=b\beta$. Since $I=I^2$ there exist $\gamma_1, \gamma_2, \delta_1, \delta_2 \in I$ such that $\alpha = \gamma_1\gamma_2$ and $\beta = \delta_1\delta_2$. Set $c=a\gamma_1$ and $d=b\delta_1$. Then $c, d \in \im I$, and by assumption there exists $\gamma_c, \gamma_d \in I$ such that $x\gamma_c = c$ and $x\gamma_d = d$ for all $x \in X$. It then follows easily that  $\gamma_c\gamma_2=\gamma_d\delta_2$. 
	Now \[\gamma_c(\gamma_2\rho)=(\gamma_c\gamma_2)\rho=(\gamma_d\delta_2)\rho=\gamma_d(\delta_2\rho).\]
	Thus 
	\begin{align*}
a(\alpha\rho)&=a(\gamma_1\gamma_2\rho) = a \gamma_1(\gamma_2\rho) = c(\gamma_2\rho) \\
&= x\gamma_c(\gamma_2\rho)=x\gamma_d(\delta_2\rho)=d(\delta_2\rho)\\ 
&=b\delta_1(\delta_2\rho)=b(\delta_1\delta_2\rho)=b(\beta\rho)
\end{align*}
	where $x$ is taken to be any element of $X$, hence showing that $\rho$ is right-balanced.
	
(2) If $A$ is $I$-representable then Lemma \ref{lem:lbalanced} gives that every linked left translation is left-balanced and the result then follows from Theorem \ref{thm:maps} and part (6) of Theorem \ref{lem:reductive_quotients}.
\end{proof}

The following result is also now immediate from Theorems \ref{cor:rep} and \ref{thm:maps},  Lemmas  \ref{lem:lbalanced} and \ref{lem:balanced}.

\begin{theorem}
Let $A$ be an algebra and $I$ a subsemigroup of $\en(A)$. If $A$ is both $I$-representable and $I$-separable then 
$\Omega(I) = \{(\lambda_f, \rho_f): f \in T(A, I)\}\cong T(A,I)$.
\end{theorem}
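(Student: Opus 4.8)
The plan is to prove the two assertions in turn: first the set-theoretic equality $\Omega(I) = \{(\lambda_f,\rho_f) : f \in T(A,I)\}$, and then the isomorphism $\Omega(I) \cong T(A,I)$. Both will follow by assembling results already established in the excerpt, so the work is essentially bookkeeping about which hypothesis powers which lemma rather than any fresh computation.

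For the equality I would invoke Theorem~\ref{thm:maps}, which reduces the claim to showing that every $\lambda \in \Lambdat(I)$ is left-balanced and that every $\rho \in \Rhot(I)$ is right-balanced. The key observation is that, by definition, $\Lambdat(I)$ and $\Rhot(I)$ are the images of $\piL$ and $\piR$, so any $\lambda \in \Lambdat(I)$ occurs as the first coordinate of some bi-translation $(\lambda,\rho) \in \Omega(I)$, and dually any $\rho \in \Rhot(I)$ occurs as the second coordinate of some $(\lambda,\rho) \in \Omega(I)$. Applying Lemma~\ref{lem:lbalanced} (using $I$-representability) to such a pair shows that $\lambda$ is left-balanced, and applying Lemma~\ref{lem:balanced} (using $I$-separability) shows that $\rho$ is right-balanced. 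Feeding these two facts back into Theorem~\ref{thm:maps} yields the desired equality.

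For the isomorphism I would use part~(6) of Theorem~\ref{cor:rep}: since $A$ is $I$-separable, $I$ is right $\fullTA$-reductive, and hence $\OmegaS[T](I) \cong T(A,I)$ via the natural morphism $\chiO^T$. It then remains only to note that $\OmegaS[T](I) = \{(\lambda_f,\rho_f) : f \in T(A,I)\}$ by Proposition~\ref{prop:realise}, which by the first part coincides with $\Omega(I)$ itself. Chaining these identifications gives $\Omega(I) = \OmegaS[T](I) \cong T(A,I)$, as required.

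The step requiring the most care is the first one, where one must confirm that checking the balanced conditions on $\Lambdat(I)$ and $\Rhot(I)$ (rather than on all of $\Lambda(I)$ or $\Rho(I)$) is precisely what Theorem~\ref{thm:maps} demands, and that each $\lambda$ or $\rho$ in these images genuinely arises from a bi-translation so that Lemmas~\ref{lem:lbalanced} and~\ref{lem:balanced} apply. Once that alignment is confirmed, no substantive obstacle remains, since the constituent lemmas and theorems have already done the real work.
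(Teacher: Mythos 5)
Your proposal is correct and follows exactly the route the paper intends: the paper states this result is "immediate from Theorems~\ref{cor:rep} and \ref{thm:maps}, Lemmas~\ref{lem:lbalanced} and \ref{lem:balanced}", and your argument assembles precisely those ingredients, including the (correct) observation that every element of $\Lambdat(I)$ or $\Rhot(I)$ arises as a coordinate of some bi-translation so that the two lemmas apply, and the use of part~(6) of Theorem~\ref{cor:rep} together with Proposition~\ref{prop:realise} for the isomorphism.
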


The interaction of $I$-representability and $I$-separability is very subtle, and it is together that they have the most power. We shall soon see (in Theorem~\ref{thm:main} below) that when both conditions hold we have the stronger result that every bi-translation of $I$ is realised by \emph{endomorphisms} of $A$, that is, $\Omega(I) = \{(\lambda_f, \rho_f): f \in S(A, I)\}$, (and moreover, $\Omega(I)$ is isomorphic to $S(A, I)$ in this case). 
\begin{remark}\label{rem:repandsep} 
In Subsection~\ref{sub:maps} we considered transformations of $A$. Writing as before $\I$ for the algebra generated by all images of maps in $I$ it is clear that each $\alpha \in I$ (including $\alpha\rho$ for each $\alpha \in I$ and $\rho\in P(I)$) can be viewed as a map $\alpha : A \rightarrow \I$. Hence, if $f \in \fullTA[\I]$ the composition $\alpha f$ makes sense, and moreover if $f$ is a morphism then so is $\alpha f$. If $f \in \fullTA[\I]$ has the property that $\alpha f \in I$ for all $\alpha \in I$ we shall, by a slight abuse of notation, write $\rho_f$ to denote the right translation given by $\alpha \rightarrow \alpha f$. We shall show that when the $I$-separability condition holds, for each linked right translation $\rho \in \Rhot(I)$  we can construct a well-defined \textit{endomorphism} $f$ of $\I$ that encodes the behaviour of the right translation via $\rho = \rho_f$; moreover, there is {\em no choice} for what this endomorphism should be. Then, in the case where the $I$-representability condition also holds, so that $A=\I$,  in this case the endomorphism constructed will be in the idealiser $S$ of $I$. Towards this goal, we make the following two definitions.
\end{remark}

\begin{definition}\label{defn:sbalanced} Let $I$ be a subsemigroup of $\en(A)$.
	
\begin{enumerate} 
\item We say that a right translation $\rho\in \Rho(I)$ is {\em strongly  right-balanced} if for all \[t(x_1\alpha_1,\ldots ,x_m\alpha_m)=
s(y_1\beta_1,\ldots ,y_n\beta_n)\in \I\] where $\alpha_i, \beta_j\in I$ and $x_i, y_j \in A$ for $1\leq i\leq m, 1\leq j\leq n$, we have
\[t(x_1\alpha_1\rho,\ldots ,x_m\alpha_m\rho)=
s(y_1\beta_1\rho,\ldots ,y_n\beta_n\rho).\]
\item  We say that a left translation $\lambda \in \Lambda(I)$ is {\em strongly left-balanced} if 
for all $a\in A$ there exists  $x_a\in A$ such that 
\[a\lambda(\alpha)=x_a\alpha\mbox{ for all }\alpha\in I\]
and the $x_a$ {\em can be chosen} so that the map $a\mapsto x_a$ is a morphism.
\end{enumerate}
\end{definition}

\begin{proposition}\label{prop:rtranslationsbymorphisms} Let $I$ be a subsemigroup of $\en(A)$ and let  $\rho \in \Rho(I)$.  There exists a morphism $f:\I\rightarrow \I$  such that $\rho=\rho_f$ if and only if $\rho$ is strongly right-balanced. Moreover, for all
	\[a=t(x_1\alpha_1,\ldots ,x_m\alpha_m)\in \I\]
	where $\alpha_i\in I$ and $x_i\in A$, for  $1\leq i\leq m$, we have that $af$ must be given by
	\[af=t(x_1\alpha_1\rho,\ldots ,x_m\alpha_m\rho).\]

Consequently,  if $A$ is $I$-representable, then  $\rho=\rho_f$ for some $f\in\en(A)$ if and only if $\rho$ is strongly right-balanced, and in this case
$f$ must be defined as given.
\end{proposition}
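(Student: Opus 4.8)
The plan is to prove the biconditional characterising when a right translation $\rho$ is realised by a morphism $f:\I\to\I$, and to extract the forced formula for $f$ along the way. The easy direction is the forward one: if $\rho=\rho_f$ for some morphism $f:\I\to\I$, then for any equality $t(x_1\alpha_1,\ldots,x_m\alpha_m)=s(y_1\beta_1,\ldots,y_n\beta_n)$ in $\I$, I would apply $f$ to both sides and use that $f$ is a morphism to push it inside the terms, giving $t(x_1\alpha_1 f,\ldots,x_m\alpha_m f)=s(y_1\beta_1 f,\ldots,y_n\beta_n f)$; since each $\alpha_i f=\alpha_i\rho$ (and likewise for the $\beta_j$), this is exactly the strongly right-balanced condition.

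For the converse I would assume $\rho$ is strongly right-balanced and \emph{construct} $f$ by the forced formula. Every element $a\in\I$ can be written (by $I$-representability of $\I$ over itself, i.e.\ since $\I=\langle\im I\rangle$) as $a=t(x_1\alpha_1,\ldots,x_m\alpha_m)$ with $\alpha_i\in I$, $x_i\in A$, and I would \emph{define}
\[af=t(x_1\alpha_1\rho,\ldots,x_m\alpha_m\rho).\]
The first key step is checking this is well-defined: two term-expressions for the same element $a$ yield two right-hand sides, and these agree precisely because $\rho$ is strongly right-balanced — this is the sole content of that hypothesis. The second key step is verifying that $f$ so defined is a morphism of $\I$: given a basic operation applied to elements $a^{(1)},\ldots,a^{(k)}$, I would combine their chosen term-expressions into a single term-expression for the result and read off compatibility from the definition, again using well-definedness to be free in the choice of expression. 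Finally I would confirm $\rho=\rho_f$ by taking $a=x\alpha$ (a single generator of $\im I$) and checking $af=x(\alpha\rho)$, so that $\alpha f=\alpha\rho$ for all $\alpha\in I$; this also gives that $\alpha f\in I$, so $\rho_f$ in the sense of Remark~\ref{rem:repandsep} is legitimate.

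The forced-uniqueness clause is immediate once well-definedness is in hand: any morphism $g$ realising $\rho$ must, by the forward direction's computation, satisfy $ag=t(x_1\alpha_1\rho,\ldots,x_m\alpha_m\rho)=af$ for every $a\in\I$, so $g=f$. The final sentence of the statement is then just the specialisation $A=\I$: when $A$ is $I$-representable we have $\I=A$, so a morphism $f:\I\to\I$ is exactly an endomorphism $f\in\en(A)$, and since $\alpha f\in I$ for all $\alpha\in I$ we have $f\in S(A,I)$.

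I expect the main obstacle to be the well-definedness and morphism-compatibility bookkeeping in the converse: one must be careful that the strongly right-balanced condition, as stated for equalities of terms, is exactly strong enough to handle \emph{all} the competing expressions that arise both from different factorisations $a=x\alpha$ and from applying operations, and that no additional hypothesis (such as $I$-separability, which governs the \emph{left} side and appears only to guarantee that such a $\rho$ is right-balanced in the first place) is silently needed here. The cleanest route is to isolate well-definedness as the crux and let morphism-preservation and $\rho=\rho_f$ follow formally from it.
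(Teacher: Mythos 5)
Your proof is correct and follows essentially the same route as the paper: the forward direction pushes the morphism $f$ through the terms to read off the strongly right-balanced condition, and the converse defines $f$ by the forced formula, with well-definedness being exactly the content of that condition; the paper leaves the morphism-check and $\rho=\rho_f$ as "easy to check", which you fill in correctly. One caveat: your closing claim that $f\in S(A,I)$ is not justified (and not asserted in the proposition) — knowing $\alpha f=\alpha\rho\in I$ only places $f$ in the \emph{right} idealiser, and membership in the two-sided idealiser $S(A,I)$ would additionally require $f\alpha\in I$ for all $\alpha\in I$, which does not follow from the hypotheses on $\rho$ alone.
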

\begin{proof}
Suppose first that $\rho=\rho_f$ for some morphism $f:\I \rightarrow \I$. Let $t(x_1\alpha_1,\ldots ,x_m\alpha_m)=
s(y_1\beta_1,\ldots ,y_n\beta_n)\in\I$. Then 
\[\begin{array}{rcl}
t(x_1\alpha_1\rho,\ldots ,x_m\alpha_m\rho)&=&t(x_1\alpha_1f,\ldots ,x_m\alpha_mf)\\
&=&t(x_1\alpha_1,\ldots ,x_m\alpha_m)f\\
&=&s(y_1\beta_1,\ldots ,y_n\beta_n)f\\
&=&s(y_1\beta_1f,\ldots ,y_n\beta_nf)\\
&=&s(y_1\beta_1\rho,\ldots ,y_n\beta_n\rho),
\end{array}\]
so that $\rho$ is strongly right-balanced.

Conversely, if $\rho$ is strongly right-balanced, then there is a well-defined  map $f:\I \rightarrow \I$ given by 
$af=t(x_1\alpha_1\rho,\ldots ,x_m\alpha_m\rho)$, where $a=t(x_1\alpha_1,\ldots ,x_m\alpha_m)$ {\em for any}
such expression of $a$. It is easy to check from this formulation that $\rho = \rho_f$ and $f\in \en(\I)$.

Finally, if $A$ is $I$-representable then $A=\I$ and we immediately see that $\rho=\rho_f$ for some (the given) $f \in \en(A)$ if and only if $\rho $ is strongly right-balanced.
\end{proof}

\begin{proposition}\label{prop:ltranslationsbymorphisms} Let $I$ be a subsemigroup of $\en(A)$ and let $\lambda\in \Lambda(I)$. 
	Then $\lambda=\lambda_f$ for some $f\in\en(A)$  if and only if $\lambda$ is {\em strongly left-balanced}.

	Further, if $A$ is $I$-separable and $(\lambda,\rho)\in \Omega(I)$, then $\lambda=\lambda_f$  for some $f\in\en(A)$ if and only if $\lambda$ is  left-balanced.
\end{proposition}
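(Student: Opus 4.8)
The plan is to establish the two equivalences in turn, deriving the second from the first.

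For the first equivalence, I would argue as in Proposition~\ref{prop:rtranslationsbymaps}(2) while tracking the extra morphism structure. If $\lambda=\lambda_f$ for some $f\in\en(A)$, then putting $x_a=af$ gives $a\lambda(\alpha)=a(f\alpha)=(af)\alpha=x_a\alpha$ for all $\alpha\in I$, and the assignment $a\mapsto x_a$ is precisely the morphism $f$; hence $\lambda$ is strongly left-balanced. Conversely, if $\lambda$ is strongly left-balanced, I would choose the witnesses $x_a$ so that $f\colon a\mapsto x_a$ is a morphism. Then for every $a\in A$ and $\alpha\in I$ we have $a(f\alpha)=(af)\alpha=x_a\alpha=a\lambda(\alpha)$, so $f\alpha$ and $\lambda(\alpha)$ agree as maps on $A$; thus $\lambda(\alpha)=f\alpha$ for all $\alpha\in I$, giving $\lambda=\lambda_f$ with $f\in\en(A)$ (and $f$ automatically lies in the left idealiser, since $f\alpha=\lambda(\alpha)\in I$).

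For the second equivalence I assume $A$ is $I$-separable and $(\lambda,\rho)\in\Omega(I)$. The forward direction is immediate: an endomorphism is in particular a transformation, so if $\lambda=\lambda_f$ with $f\in\en(A)$ then Proposition~\ref{prop:rtranslationsbymaps}(2) shows $\lambda$ is left-balanced. The content is the converse, where I plan to show that $I$-separability promotes ``left-balanced'' to ``strongly left-balanced'', after which the first equivalence closes the argument. So suppose $\lambda$ is left-balanced, with witnesses $x_a$ satisfying $a\lambda(\alpha)=x_a\alpha$ for all $\alpha\in I$. The first step is to note that each $x_a$ is \emph{uniquely} determined: if $x_a$ and $x_a'$ both satisfy the defining equation then $x_a\alpha=x_a'\alpha$ for all $\alpha\in I$, i.e.\ $(x_a,x_a')\in\ker I$, and $I$-separability forces $x_a=x_a'$. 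This makes $f\colon a\mapsto x_a$ a well-defined map on $A$.

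The main step is to verify that $f$ is a morphism, and this is where I expect the real work to lie. Fixing a term $t$ and elements $a_1,\dots,a_n\in A$, by the uniqueness just established it suffices to check that $t(x_{a_1},\dots,x_{a_n})$ is a valid witness for $t(a_1,\dots,a_n)$. Using successively that $\alpha$ is a morphism, the left-balanced equations $x_{a_i}\alpha=a_i\lambda(\alpha)$, and that $\lambda(\alpha)\in I$ is a morphism, one computes, for each $\alpha\in I$,
\[ t(x_{a_1},\dots,x_{a_n})\alpha = t(x_{a_1}\alpha,\dots,x_{a_n}\alpha) = t(a_1\lambda(\alpha),\dots,a_n\lambda(\alpha)) = t(a_1,\dots,a_n)\lambda(\alpha). \]
Hence $t(x_{a_1},\dots,x_{a_n})$ witnesses $t(a_1,\dots,a_n)$, and uniqueness gives $x_{t(a_1,\dots,a_n)}=t(x_{a_1},\dots,x_{a_n})$, so $f$ respects $t$. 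Therefore $\lambda$ is strongly left-balanced, and the first equivalence yields $\lambda=\lambda_f$ for some $f\in\en(A)$. The crux is precisely this morphism check: $I$-separability is indispensable, since without it the witnesses $x_a$ need not be unique and there would be no canonical candidate map to test, whereas the remaining manipulations are routine given the definitions.
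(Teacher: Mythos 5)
Your proof is correct and follows essentially the same route as the paper's: the first equivalence is unwound directly from the definitions, and for the second the key step is the chain $t(x_{a_1},\dots,x_{a_n})\alpha = t(a_1\lambda(\alpha),\dots,a_n\lambda(\alpha)) = t(a_1,\dots,a_n)\lambda(\alpha) = x_{t(a_1,\dots,a_n)}\alpha$ followed by an appeal to $I$-separability, which is exactly the paper's computation (the paper writes $af$ where you write $x_a$ and concludes $f\in\en(A)$ directly rather than routing back through strong left-balancedness, but this is only a repackaging).
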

\begin{proof} The first statement follows  from the definitions. 
	
For the final statement suppose that $A$ is $I$-separable and let $(\lambda,\rho)\in \Omega(I)$.
By Proposition~\ref{prop:rtranslationsbymaps}, $\lambda$ is left-balanced if and only if $\lambda=\lambda_f$ for some $f\in\fullTA$.  We show that the $I$-separability condition forces $f$ to be a morphism.
	For any $t(x_1,\dots,x_n)\in A$ and $\alpha\in I$ we have
	\[\begin{array}{rcl}
	t(x_1,\dots,x_n)f\alpha&=&t(x_1,\dots,x_n)\lambda(\alpha)\\
	&=&t(x_1\lambda(\alpha),\dots,x_n\lambda(\alpha))\\
	&=&t(x_1f\alpha,\dots,x_nf\alpha)\\
	&=&t(x_1f,\dots,x_nf)\alpha.
	\end{array}\]
	As this is true for all $\alpha\in I$, we deduce from $I$-separability of $A$ that $t(x_1,\dots,x_n)f=t(x_1f,\dots,x_nf)$ so that $f\in\en(A)$, as
	desired.
\end{proof} 

We can put together the results of Propositions~\ref{prop:rtranslationsbymorphisms} and \ref{prop:ltranslationsbymorphisms}  to obtain an
analogue of Theorem~\ref{thm:maps}. 

\begin{theorem}\label{cor:morphisms} Let $I$ be a subsemigroup of $\en(A)$. Then
	\[\Omega(I)=\{ (\lambda_f,\rho_f): f\in S(A,I)\}\]
	if and only if for all $(\lambda,\rho)\in \Omega(I)$ we have that
	$\rho$ is strongly right-balanced and $\lambda$ is strongly left-balanced, with 
	\[x_{t(x_1\alpha_1,\dots,x_n\alpha_n)}=t(x_1\alpha_1\rho,\dots,x_n\alpha_n\rho)\]
	for all $t(x_1\alpha_1,\dots,x_n\alpha_n)\in \I$. 
\end{theorem}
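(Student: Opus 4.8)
The plan is to obtain the theorem by assembling Propositions~\ref{prop:rtranslationsbymorphisms} and~\ref{prop:ltranslationsbymorphisms}, with the displayed compatibility condition playing the role of the glue that forces a \emph{single} endomorphism to realise both coordinates of a given bi-translation. Since Proposition~\ref{prop:realise} already gives $\{(\lambda_f,\rho_f):f\in S(A,I)\}\subseteq\Omega(I)$, the only substantive point throughout is whether every $(\lambda,\rho)\in\Omega(I)$ is realised by some $f\in S(A,I)$.

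For the forward implication, I would suppose $\Omega(I)=\{(\lambda_f,\rho_f):f\in S(A,I)\}$ and fix $(\lambda,\rho)\in\Omega(I)$, writing $(\lambda,\rho)=(\lambda_f,\rho_f)$ with $f\in S(A,I)\subseteq\en(A)$. The first step is to check that $f$ maps $\I$ into $\I$: a typical element of $\I$ has the form $t(c_1,\dots,c_k)$ with each $c_i=a_i\alpha_i\in\im I$, and as $f$ is a morphism with $\alpha_i f=\alpha_i\rho\in I$ we get $c_i f=a_i(\alpha_i\rho)\in\im I$, whence $t(c_1,\dots,c_k)f\in\I$. Thus $f|_{\I}\in\en(\I)$ realises $\rho$, so Proposition~\ref{prop:rtranslationsbymorphisms} yields that $\rho$ is strongly right-balanced and that $af=t(x_1\alpha_1\rho,\dots,x_m\alpha_m\rho)$ for $a=t(x_1\alpha_1,\dots,x_m\alpha_m)\in\I$. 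Proposition~\ref{prop:ltranslationsbymorphisms} gives that $\lambda$ is strongly left-balanced, and the witnessing morphism can be taken to be $f$ itself (so $x_a=af$); the displayed identity then reads off directly from the formula for $af$ on $\I$.

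For the converse, I would take an arbitrary $(\lambda,\rho)\in\Omega(I)$ satisfying the stated conditions. Strong left-balancedness lets me choose the $x_a$ so that $f\colon a\mapsto x_a$ is a morphism; then $a\lambda(\alpha)=x_a\alpha=a(f\alpha)$ for all $a\in A$ and $\alpha\in I$ gives $\lambda=\lambda_f$ with $f\in\en(A)$. It remains to verify that this \emph{same} $f$ realises $\rho$. For $\alpha\in I$ and $b\in A$ we have $b\alpha\in\im I\subseteq\I$, and viewing $b\alpha$ through the one-variable projection term the compatibility condition gives $(b\alpha)f=x_{b\alpha}=b(\alpha\rho)$; as $b$ is arbitrary this says $\alpha f=\alpha\rho$, i.e. $\rho=\rho_f$. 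Finally $\alpha f=\alpha\rho\in I$ and $f\alpha=\lambda(\alpha)\in I$ for all $\alpha\in I$ place $f$ in the idealiser, so $f\in S(A,I)$ and $(\lambda,\rho)=(\lambda_f,\rho_f)$, completing the required equality.

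The crux, and the step I expect to need the most care, is precisely that $\I$ may be a proper subalgebra of $A$: the morphism produced by strong left-balancedness is defined on all of $A$, whereas the right-translation data only constrains behaviour on $\I$. The displayed condition $x_{t(x_1\alpha_1,\dots,x_n\alpha_n)}=t(x_1\alpha_1\rho,\dots,x_n\alpha_n\rho)$ is exactly what certifies that the single morphism built from $\lambda$ agrees on $\I$ with the endomorphism encoding $\rho$ — which by Proposition~\ref{prop:rtranslationsbymorphisms} is forced to be unique on $\I$ — so that one $f$ serves both coordinates. I would also remark, as a consistency check, that well-definedness of the right-hand side of this identity across different expressions of an element of $\I$ is itself strong right-balancedness, so the two stated hypotheses are mutually consistent rather than independent demands.
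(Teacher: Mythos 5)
Your proposal is correct and follows essentially the same route as the paper: both directions are assembled from Propositions~\ref{prop:rtranslationsbymorphisms} and~\ref{prop:ltranslationsbymorphisms}, with the displayed compatibility condition serving to force the morphism arising from strong left-balancedness to agree on $\I$ with the endomorphism encoding $\rho$. The only (harmless) variation is in the converse, where you specialise the compatibility identity to elements $b\alpha \in \im I$ via the projection term rather than first producing $g\in\en(\I)$ with $\rho=\rho_g$ and then checking $g=f|_{\I}$; since $\rho_f$ depends only on the restriction of $f$ to $\im I$, this shortcut is valid.
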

\begin{proof} 
Suppose first that  $\Omega(I)=\{ (\lambda_f,\rho_f): f\in S(A, I)\}$. 
Let $(\lambda,\rho) \in \Omega(I)$. By assumption, we have that there exists $f\in\EndA$ such that $\lambda= \lambda_f$ and $\rho=\rho_f$. Notice that for each $b \in \im I$ we have $b= a \alpha$ for some $a \in A$, $\alpha \in I$, and so $bf = a\alpha f = a\alpha\rho \in \im I$, from which it follows easily that $f|_{\I}\in\EndA[\I]$. Since $f \in \en(A)$ we have $\lambda$ is strongly left-balanced by Proposition~\ref{prop:ltranslationsbymorphisms}, and noting that $\rho_f=\rho_{f|_{\I}}$, we get that $\rho$ is strongly right-balanced by Proposition~\ref{prop:rtranslationsbymorphisms}.
Moreover, for any $a=t(x_1\alpha_1,\dots,x_n\alpha_n)\in \I$, we have that 
\[af=t(x_1\alpha_1,\dots,x_n\alpha_n)f=t(x_1\alpha_1f,\dots,x_n\alpha_nf)=t(x_1\alpha_1\rho,\dots,x_n\alpha_n\rho),\]
using Proposition~\ref{prop:rtranslationsbymorphisms} again, so that the $x_a$ may be chosen as indicated.

Conversely, suppose that $(\lambda,\rho) \in \Omega(I)$ with $\lambda$ strongly left-balanced and $\rho$ strongly right-balanced with the $x_a$ given as above for all $a\in\I$.
Then by Propositions~\ref{prop:rtranslationsbymorphisms} and \ref{prop:ltranslationsbymorphisms}, there exist $f\in\EndA$ and $g\in\EndA[\I]$ such that $\lambda = \lambda_f$ and $\rho = \rho_g$.
It remains to show that $\rho = \rho_f$. 

Recalling that $x_a = af$, 
this follows from the fact that for all $a = t(x_1\alpha_1,\dots,x_n\alpha_n)\in \I$ we have
$$ ag = t(x_1\alpha_1,\dots,x_n\alpha_n)g = t(x_1\alpha_1 g,\dots,x_n\alpha_n g) = t(x_1\alpha_1\rho,\dots,x_n\alpha_n\rho) = x_{a} = af, $$
and thus $g = f|_{\I}$, giving that $\rho = \rho_{f|_{\I}} = \rho_f$.  In particular, we have that $f\in S(A,I)$.

\end{proof} 

The previous result gives a characterisation (via properties of linked left and right translations) for the translational hull to be realised by endomorphisms. We conclude this section by demonstrating that $I$-representability and $I$-separability together provide a sufficient condition (via properties of the endomorphisms in $I$) for this to hold.

To state our next result, it is convenient to have a slight variation of the $I$-separability condition.
\begin{definition}
Let $C$ be a subalgebra of $A$, and let $\ker(C, I)$ be the relation on $C$ defined by 
\[\ker(C, I)=\{(a,b)\in C\times C: a\gamma = b\gamma \; \mbox{for all } \gamma \in I\}.\]
We say that {\em $C$ is $I$-separable} if $\ker(C, I)$ is trivial.
\end{definition}

\begin{proposition}\label{prop:BSEP}
Let $I$ be a subsemigroup of $\en(A)$ and let $\I$  denote the subalgebra of $A$ generated by $\bigcup_{\alpha \in I} \im\alpha$. Suppose that $\I$ is $I$-separable.

\begin{enumerate}
	\item Every linked right translation of $I$ is strongly right-balanced, that is
	$$\Rhot(I) \subseteq \{\rho_f: f \in \en{\I}\}.$$
\item If $\rho_f \in \Rhot(I)$  for some $ f \in \en(\I)$, then the left translations $\lambda$ linked to $\rho_f$ are those satisfying $\lambda(\alpha)|_{\lowerme{\I}} = (f\alpha)|_{\lowerme{\I}}$ for all $\alpha \in I$.
\end{enumerate}  
\end{proposition}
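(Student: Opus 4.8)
The plan is to treat the two parts separately, with part~(1) carrying the real weight (it is the ``strong'' strengthening of Lemma~\ref{lem:balanced}) and part~(2) being a direct verification of an equivalence.

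For part~(1), let $\rho\in\Rhot(I)$ and fix a left translation $\lambda$ with $(\lambda,\rho)\in\Omega(I)$, which exists by the definition of $\Rhot(I)$. To show $\rho$ is strongly right-balanced I would take an arbitrary equality $a=t(x_1\alpha_1,\dots,x_m\alpha_m)=s(y_1\beta_1,\dots,y_n\beta_n)\in\I$ and set $P=t(x_1\alpha_1\rho,\dots,x_m\alpha_m\rho)$ and $Q=s(y_1\beta_1\rho,\dots,y_n\beta_n\rho)$. Since each $\alpha_i\rho,\beta_j\rho\in I$, both $P$ and $Q$ lie in $\I$, so by $I$-separability of $\I$ it suffices to prove $P\gamma=Q\gamma$ for every $\gamma\in I$. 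The computation I would carry out uses that $\gamma$ is a morphism to distribute it across the term, then the linked identity $(\alpha_i\rho)\gamma=\alpha_i\lambda(\gamma)$ at each leaf, and finally that $\lambda(\gamma)\in I$ is a morphism to recollect the term:
\[P\gamma=t(x_1\alpha_1\lambda(\gamma),\dots,x_m\alpha_m\lambda(\gamma))=t(x_1\alpha_1,\dots,x_m\alpha_m)\lambda(\gamma)=a\lambda(\gamma),\]
and symmetrically $Q\gamma=a\lambda(\gamma)$. Hence $P\gamma=Q\gamma$ for all $\gamma\in I$, and $I$-separability of $\I$ gives $P=Q$. By Proposition~\ref{prop:rtranslationsbymorphisms} a strongly right-balanced $\rho$ equals $\rho_f$ for a morphism $f\in\en(\I)$, giving the stated inclusion $\Rhot(I)\subseteq\{\rho_f:f\in\en(\I)\}$.

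For part~(2) I would prove the two implications directly. Suppose first that $(\lambda,\rho_f)\in\Omega(I)$. For any $c=a\gamma\in\im I$ the linked identity $\gamma\lambda(\alpha)=(\gamma\rho_f)\alpha=(\gamma f)\alpha$ gives $c\lambda(\alpha)=a\gamma\lambda(\alpha)=a(\gamma f)\alpha=c(f\alpha)$, so $\lambda(\alpha)$ and $f\alpha$ agree on $\im I$. Both restrict to morphisms on $\I$ (the first since $\lambda(\alpha)\in I\subseteq\en(A)$, the second as a composite of the morphisms $f$ and $\alpha|_{\lowerme{\I}}$), and $\im I$ generates $\I$; since the equalizer of two morphisms is a subalgebra, agreement on the generating set propagates to all of $\I$, whence $\lambda(\alpha)|_{\lowerme{\I}}=(f\alpha)|_{\lowerme{\I}}$. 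Conversely, assume $\lambda(\alpha)|_{\lowerme{\I}}=(f\alpha)|_{\lowerme{\I}}$ for all $\alpha\in I$. To verify the linked identity, fix $\alpha,\beta\in I$ and $a\in A$; since $a\alpha\in\im I\subseteq\I$ the hypothesis yields $a\alpha\lambda(\beta)=(a\alpha)(f\beta)=a\alpha f\beta=a(\alpha\rho_f)\beta$, and as $a$ was arbitrary this gives $\alpha\lambda(\beta)=(\alpha\rho_f)\beta$, so $(\lambda,\rho_f)\in\Omega(I)$.

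The main obstacle lies in part~(1). One cannot read off $P=Q$ from the syntactic identity $t(\dots)=s(\dots)$, because a priori two distinct expressions of the same element of $\I$ could be sent by the ``$\rho$-twisting'' to different values; that they are not is precisely the content of strong right-balancedness. The device that breaks the deadlock is to compare $P$ and $Q$ only after post-composing with an arbitrary $\gamma\in I$, thereby converting a question about equality in $\I$ into the relation $\ker(\I,I)$, which is trivial by hypothesis. The price is the bookkeeping needed to push $\gamma$ through the term and pull $\lambda(\gamma)$ back out, which works only because both $\gamma$ and $\lambda(\gamma)$ are endomorphisms. Part~(2) is then routine; the only point meriting a word is that agreement of two morphisms on the generating set $\im I$ propagates to all of $\I$, and one notes that $I$-separability is in fact not needed for this equivalence beyond being the standing hypothesis of the proposition.
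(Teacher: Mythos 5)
Your proof is correct and follows essentially the same route as the paper's: part (1) by post-composing with an arbitrary $\gamma\in I$, using the linked identity to rewrite both expressions as $a\lambda(\gamma)$, and invoking $I$-separability of $\I$; part (2) by the same direct computations (your phrasing via agreement on the generating set $\im I$ is just the paper's explicit term calculation in disguise). Your closing observation that separability is not needed for part (2) is also accurate.
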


\begin{proof}
(1) Suppose that $\I$ is $I$-separable. Let $(\lambda,\rho)\in \Omega(I)$. We show that $\rho$ is strongly right-balanced. To this end suppose that 
$t(x_1\alpha_1,\ldots ,x_m\alpha_m)=
s(y_1\beta_1,\ldots ,y_n\beta_n)\in A$. Then for all $\alpha\in I$ we have
\[\begin{array}{rcl}
t(x_1\alpha_1\rho,\ldots ,x_m\alpha_m\rho)\alpha&=t(x_1(\alpha_1\rho)\alpha,\ldots ,x_m(\alpha_m\rho)\alpha)&\\&=t(x_1\alpha_1\lambda(\alpha),\ldots ,x_m\alpha_m\lambda(\alpha))
&=t(x_1\alpha_1,\ldots ,x_m\alpha_m)\lambda(\alpha)\\
&=s(y_1\beta_1,\ldots ,y_n\beta_n)\lambda(\alpha)\qquad \;\; &=s(y_1\beta_1\lambda(\alpha),\ldots ,y_n\beta_n\lambda(\alpha))\\
&=s(y_1(\beta_1\rho)\alpha,\ldots ,y_n(\beta_n\rho)\alpha)\;\;\,&=s(y_1(\beta_1\rho),\ldots ,y_n(\beta_n\rho))\alpha,\end{array}\]
and since this is true for all $\alpha\in I$, the fact that $\I$ is $I$-separable gives 
\[t(x_1\alpha_1\rho,\ldots ,x_m\alpha_m\rho)=s(y_1(\beta_1\rho),\ldots ,y_n(\beta_n\rho))\]
so that $\rho$ is strongly right-balanced. 

(2) By the previous part each element of  $\Rhot(I)$ is strongly right-balanced, and so by Proposition \ref{prop:rtranslationsbymorphisms} is of the form $\rho_f$ for some  $f \in \en(\I)$. Suppose that $(\lambda, \rho_f) \in \Omega(I)$. For all $t(x_1\alpha_1, \ldots, x_m \alpha_m) \in \I$ and all $\alpha \in I$ we clearly have
\begin{eqnarray*}
t(x_1\alpha_1, \ldots, x_m \alpha_m)\lambda(\alpha) &=& t(x_1\alpha_1\lambda(\alpha) , \ldots, x_m \alpha_m\lambda(\alpha) ) = t(x_1(\alpha_1\rho_f)\alpha , \ldots, x_m (\alpha_m\rho_f)\alpha)\\
&=& t(x_1\alpha_1f\alpha , \ldots, x_m \alpha_mf\alpha) = t(x_1\alpha_1, \ldots, x_m \alpha_m)f\alpha, 
\end{eqnarray*}
demonstrating that each left translation $\lambda$ linked to $\rho_f$ satisfies $\lambda(\alpha)|_{\I} = (f\alpha)|_{\I}$ for all $\alpha \in I$. Conversely, suppose that $\lambda$ is a left translation satisfying $\lambda(\alpha)|_{\I} = (f\alpha)|_{\I}$ for all $\alpha \in I$. Then for all $\alpha, \beta \in I$ and all $a \in A$ we have $a(\alpha\rho_f)\beta = a(\alpha f) \beta = (a\alpha)(f \beta) = (a\alpha)\lambda(\beta)$, demonstrating that $\lambda$ is linked to $\rho_f$.
\end{proof}

The next result is a strengthening, in several directions, of a result of \cite{Gluskin:1959}, available as \cite[Corollary 1, p. 312]{Petrich:1970}. 

\begin{theorem}

	\label{thm:main}
	Let $A$ be an algebra and $I$ a subsemigroup of $\en(A)$ such that $A$ is both $I$-representable and $I$-separable. Then
	with $S=S(A,I)$, the idealiser of $I$ in $\en(A)$, we have:
	\[
	\Rhot(I) = \{\rho_f: f \in S\},\quad
	\Lambdat(I)= \{\lambda_f: f \in S\}\quad \text{and} \quad
	\Omega(I) =\{(\lambda_f, \rho_f): f \in S\}.\]
	
	In particular,
	\[\Rhot(I)  \cong \Omega(I)\cong \Lambdat(I) \quad \mbox{ and } \quad \Rhot(I)  \cong S\cong \Lambdat(I),\] via natural isomorphisms.

\end{theorem}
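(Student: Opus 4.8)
The plan is to prove the central equality $\Omega(I)=\{(\lambda_f,\rho_f):f\in S\}$ first, and then read off the two projection identities and all of the claimed isomorphisms from it. The observation driving everything is that $I$-representability means precisely $\I=A$, so the hypothesis that $A$ is $I$-separable coincides with the statement that $\I$ is $I$-separable — exactly the running hypothesis of Proposition~\ref{prop:BSEP}, which will carry the main load.

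For the central equality the reverse inclusion is Proposition~\ref{prop:realise}, so I would concentrate on $\Omega(I)\subseteq\{(\lambda_f,\rho_f):f\in S\}$. Given $(\lambda,\rho)\in\Omega(I)$, Proposition~\ref{prop:BSEP}(1) shows that $\rho$ is strongly right-balanced, and then Proposition~\ref{prop:rtranslationsbymorphisms} (using $A=\I$) produces a unique $f\in\en(A)$ with $\rho=\rho_f$; since $\alpha f=\alpha\rho\in I$ for all $\alpha\in I$, this $f$ lies in the right idealiser. For the left coordinate I would feed $\rho_f\in\Rhot(I)$ into Proposition~\ref{prop:BSEP}(2): because $A=\I$, every left translation linked to $\rho_f$, and in particular $\lambda$, satisfies $\lambda(\alpha)=f\alpha$ for all $\alpha\in I$, so $\lambda=\lambda_f$ and simultaneously $fI\subseteq I$, giving $f\in S$. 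Applying $\piR$ and $\piL$ to the resulting equality then yields $\Rhot(I)=\{\rho_f:f\in S\}$ and $\Lambdat(I)=\{\lambda_f:f\in S\}$ at once.

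For the isomorphisms I would lean on the reductivity bookkeeping already assembled. Since $A$ is $I$-representable, Theorem~\ref{cor:rep}(5) gives that $I$ is left $\en(A)$-reductive, hence left reductive, so $\piR$ is injective and $\Omega(I)\cong\Rhot(I)$ naturally; since $A$ is $I$-separable, Theorem~\ref{cor:rep}(6) gives that $I$ is right $\fullTA$-reductive, hence right reductive, so $\piL$ is injective and $\Omega(I)\cong\Lambdat(I)$. Finally the central equality reads $\Omega(I)=\OmegaS(I)$, and Theorem~\ref{cor:rep}(5) already records $\OmegaS(I)\cong S$ via $\chiO^S$; composing these gives $\Rhot(I)\cong S\cong\Lambdat(I)$ through natural morphisms.

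The step I expect to be the crux is the central equality, and within it the point that a single endomorphism $f$ realises both coordinates of $(\lambda,\rho)$ and actually lands in $S$. This is where the two hypotheses interlock: $I$-separability is what produces and pins down $f$ as a morphism through strong right-balancedness, while $I$-representability ($\I=A$) is what upgrades $f$ to an endomorphism of all of $A$ and, via Proposition~\ref{prop:BSEP}(2), forces the linked $\lambda$ to be exactly $\lambda_f$ — this last step being what also delivers membership in the idealiser. Everything downstream is then routine assembly of the cited results.
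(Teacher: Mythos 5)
Your proposal is correct and follows essentially the same route as the paper's own proof: both use $I$-representability to identify $\I$ with $A$, invoke Proposition~\ref{prop:BSEP} (via $I$-separability) to obtain a single endomorphism $f$ realising both coordinates and hence lying in $S$, and then appeal to Theorem~\ref{cor:rep} for the natural isomorphisms. Your write-up merely makes explicit some steps the paper leaves terse.
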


\begin{proof}
Suppose that $A$ is both $I$-representable and $I$-separable, and let $(\lambda,\rho)\in \Omega(I)$. Since $A$ is $I$-representable we have $\I=A$, and since $A$ is $I$-separable Proposition \ref{prop:BSEP} gives that $\rho=\rho_f$ for some $f \in \en(A)$, and moreover (using again the fact that $\I=A$) that $\lambda=\lambda_f$. 
Hence $f\in S$ and we see that each of the stated equalities holds. In light of Theorem~\ref{lem:reductivity from rep or sep}, noting in particular that $\Rhot(I)=\RhoS(I)$ and $\Lambdat(I) =\LambdaS(I)$ here, the natural isomorphisms exist as given.  
\end{proof}

\begin{remark}
It is clear from the definitions that for any algebra $A$, if $I$ is taken to be equal to $\rm {End}(A)$ (or indeed, any subsemigroup containing the identity morphism), then $A$ is both $I$-representable and $I$-separable. It is also easy to see that if $A$ is $I$-representable (respectively, $I$-separable) and $J$ is a subsemigroup of $\en(A)$ satisfying $I \subseteq J \subseteq \en(A)$, then the pair $A$ is also $J$-representable (respectively, $J$-separable). Hence we pose the following questions. 
\end{remark}
\begin{question}\label{qn:min} For a given algebra $A$:
\begin{itemize}
	\item What are the minimal (with respect to inclusion) subsemigroups $I$ of $\en(A)$ such that $\en{A}$ is both $I$-representable and $I$-separable?
	\item Do there exist subsemigroups $I$ of $\en(A)$ with the property that the bi-translations are realised by endomorphisms of $A$, yet $A$ is not both
	$I$-representable and $I$-separable?	
\end{itemize}
\end{question}

\section{Representability without separability}\label{sec:nosep}
Throughout this section let $A$ be an algebra and let $I$ be a subsemigroup of $\EndA$. Our aim  is to define a congruence $\sim$ on $A$ and a congruence
$\approx$ on $I$ so that $I/{\approx}$ is a subsemigroup of $\en(A/{\sim})$, and such that the quotient $A/{\sim}$ is $I/{\approx}\,$-separable, and is also $I/{\approx}\,$-representable in the case that $A$ is $I$-representable. We then proceed to show how in certain circumstances
we can deduce information for $\Omega(I)$ from that of $\Omega(I/{\approx})$. 

\begin{definition}\label{defn:sim}  Let $k\in \mathbb{N}$ and let $\sim_k$ denote the relation on $A$ defined by 
	\[x \sim_k y\mbox{ if and only if } x\alpha=y\alpha\mbox{ for all }\alpha\in I^k.\]
Furthermore, let ${\sim}=\bigcup_{k\in \mathbb{N}}\sim_k$.
\end{definition}

It is clear that each $\sim_k$ is an equivalence relation on $A$.

\begin{lemma}\label{lem:start}
The relations $\sim_k$ where $k \in \mathbb{N}$ and the relation $\sim$ are all congruences on $A$. 
\end{lemma}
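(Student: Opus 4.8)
The plan is to handle the individual relations $\sim_k$ first and then obtain the statement for $\sim$ by a directed-union argument. For a fixed $k$, we already know $\sim_k$ is an equivalence relation, so the only thing to check is compatibility with the basic operations of $A$. The decisive observation is that every $\alpha\in I^k$ lies in $\en(A)$ (as $I$ is a subsemigroup of $\en(A)$), hence preserves all operations. So suppose $x_i\sim_k y_i$ for $1\le i\le n$ and let $t$ be any basic $n$-ary operation. For each $\alpha\in I^k$ we compute
\[
t(x_1,\dots,x_n)\alpha = t(x_1\alpha,\dots,x_n\alpha) = t(y_1\alpha,\dots,y_n\alpha) = t(y_1,\dots,y_n)\alpha,
\]
where the outer equalities use that $\alpha$ is an endomorphism and the middle equality uses $x_i\alpha=y_i\alpha$, which holds precisely because $x_i\sim_k y_i$ and $\alpha\in I^k$. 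As $\alpha\in I^k$ was arbitrary, this gives $t(x_1,\dots,x_n)\sim_k t(y_1,\dots,y_n)$, so $\sim_k$ is a congruence.

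For $\sim=\bigcup_{k\in\mathbb N}\sim_k$ I would first show that the $\sim_k$ form an increasing chain, namely $\sim_k\subseteq\sim_{k+1}$. This rests on the identity $I^{k+1}=I^k I$: any $\beta\in I^{k+1}$ factors as $\beta=\gamma\delta$ with $\gamma\in I^k$ and $\delta\in I$, so if $x\sim_k y$ then $x\gamma=y\gamma$ and hence $x\beta=(x\gamma)\delta=(y\gamma)\delta=y\beta$, giving $x\sim_{k+1}y$. With the chain established, $\sim$ is a directed union of the $\sim_k$, and standard arguments show that such a union of a chain of congruences is again a congruence: reflexivity and symmetry are inherited immediately, while for transitivity and for operation-compatibility one simply collects finitely many witnessing indices and passes to their maximum, using the chain property to align them in a single $\sim_k$ (which is a congruence by the previous paragraph).

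The proof is largely routine, and I expect the only genuine point requiring care to be the chain inclusion $\sim_k\subseteq\sim_{k+1}$, since it is what guarantees that the family $\{\sim_k\}$ is directed and hence that the union behaves well; without it, a union of congruences need not even be transitive. Everything else reduces to the fact that elements of $I^k$ are endomorphisms together with the bookkeeping of taking maxima of finitely many indices.
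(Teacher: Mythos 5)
Your proof is correct and follows essentially the same route as the paper: show each $\sim_k$ is a congruence by applying an $n$-ary term to both sides and using that elements of $I^k$ are endomorphisms, then observe that ${\sim_1}\subseteq{\sim_2}\subseteq\cdots$ so that $\sim$ is a union of a chain of congruences. Your explicit verification of the chain inclusion via the factorisation $I^{k+1}=I^kI$ is a small addition the paper leaves implicit, but the argument is otherwise the same.
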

\begin{proof}
To see that $\sim_k$ is a congruence, 
	suppose that $a_i\sim_k b_i$ for $i=1, \ldots, n$. Then for any $n$-ary term $t$ and any $\alpha \in I^k$ we have:
	$$t(a_1, \ldots, a_n) \alpha= t(a_1\alpha, \ldots, a_n\alpha) =  t(b_1\alpha, \ldots, b_n\alpha) = t(b_1, \ldots, b_n) \alpha$$
	so that $t(a_1, \ldots, a_n) \sim_k t(b_1, \ldots, b_n)$.
	
Finally notice that 
	\[{\sim_1} \subseteq {\sim_2} \subseteq \cdots\subseteq {\sim}\]
	so that $\sim$ (being the union of a chain of congruences) is a congruence.
\end{proof}

In what follows we denote the $\sim_k$ and $\sim$-classes of $a\in A$ by $[a]_k$ and $[a]$, respectively. 

\begin{lemma}\label{lem:morphismsagain}
	 
	\begin{enumerate} \item For all $k$ the  semigroup  $I$ acts on $A/{\sim_k}$ by morphisms, under $[a]_k\alpha=[a\alpha]_k$.
		\item The semigroup  $I$ acts on $A/{\sim}$ by morphisms, under $[a]\alpha=[a\alpha]$.
		\item If ${\sim_{k}} = {\sim_{k+1}}$ for some $k\in \mathbb{N}$, then ${\sim_k} = {\sim_{k+n}} = {\sim}$, for every $n\in\mathbb{N}$.
		\item We have ${\sim_{k}} = {\sim_{k+1}}$ if and only if $[a]_k\alpha=[b]_k\alpha$ for all $\alpha\in I$ implies that $[a]_k=[b]_k$. 
		\item $A$ is $I$-separable if and only if $\sim$ is equality.
	\end{enumerate} 
\end{lemma}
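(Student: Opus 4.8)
The plan is to prove Lemma~\ref{lem:morphismsagain} as a sequence of short verifications, most of which follow directly from the definitions of $\sim_k$, $\sim$, and $I$-separability, with the genuine content concentrated in the stabilisation statement (3).

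First I would establish the well-definedness and morphism properties (1) and (2). For (1), I must check that $[a]_k\alpha = [a\alpha]_k$ does not depend on the representative: if $a\sim_k a'$, then $a\beta = a'\beta$ for all $\beta\in I^k$, and I want $a\alpha \sim_k a'\alpha$, i.e. $a\alpha\gamma = a'\alpha\gamma$ for all $\gamma\in I^k$. Since $\alpha\gamma\in I^{k+1}\subseteq I^k$ (as $I^{k+1}\subseteq I^k$ because $I$ is a semigroup), this is immediate. That this action is by morphisms follows because each $\alpha\in\en(A)$ is a morphism on $A$ and the operations on $A/{\sim_k}$ are computed representative-wise via Lemma~\ref{lem:start}; I would simply note $t([a_1]_k,\dots,[a_n]_k)\alpha = [t(a_1,\dots,a_n)\alpha]_k = [t(a_1\alpha,\dots,a_n\alpha)]_k = t([a_1]_k\alpha,\dots,[a_n]_k\alpha)$. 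Part (2) is identical, working with $\sim$ in place of $\sim_k$ and using that $\sim$ is a congruence (Lemma~\ref{lem:start}) together with the fact that if $a\sim a'$ then $a\sim_k a'$ for some $k$, whence $a\alpha\sim_{k} a'\alpha$ as above, so $a\alpha\sim a'\alpha$.

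The main obstacle, and the step requiring actual argument, is (3): the stabilisation of the chain. I would prove the single-step claim that ${\sim_k}={\sim_{k+1}}$ implies ${\sim_{k+1}}={\sim_{k+2}}$, and then iterate. So suppose ${\sim_k}={\sim_{k+1}}$ and let $x\sim_{k+1}y$, meaning $x\alpha=y\alpha$ for all $\alpha\in I^{k+1}$; I want $x\sim_{k+2}y$. Take any $\beta\in I^{k+2}$ and write $\beta = \gamma\delta$ with $\gamma\in I$ and $\delta\in I^{k+1}$. It suffices to show $x\gamma\sim_{k+1}y\gamma$, for then $(x\gamma)\delta=(y\gamma)\delta$, i.e. $x\beta=y\beta$. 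Now $x\gamma$ and $y\gamma$ satisfy $(x\gamma)\varepsilon=(y\gamma)\varepsilon$ for all $\varepsilon\in I^{k+1}$ precisely when $x(\gamma\varepsilon)=y(\gamma\varepsilon)$, and $\gamma\varepsilon\in I^{k+2}\subseteq I^{k+1}$, so this holds by $x\sim_{k+1}y$; hence $x\gamma\sim_{k+1}y\gamma$. This shows $x\sim_{k+2}y$, giving ${\sim_{k+1}}\subseteq{\sim_{k+2}}$; the reverse inclusion is the general containment ${\sim_{k+1}}\subseteq{\sim_{k+2}}$ already noted (larger powers impose fewer constraints), so in fact I should be careful about the direction. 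Writing it cleanly: ${\sim_{k}}\subseteq{\sim_{k+1}}$ always, so I only need the reverse at the stabilised level, and the computation above delivers ${\sim_{k+1}}\subseteq {\sim_{k+2}}$, combining with the trivial ${\sim_{k+1}}\supseteq\cdots$ to give equality. Iterating yields ${\sim_k}={\sim_{k+n}}$ for all $n$, and since $\sim = \bigcup_m \sim_m$ is the union of this eventually-constant chain, $\sim = \sim_k$.

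Finally I would dispatch (4) and (5), which are reformulations. For (4), the condition ${\sim_k}={\sim_{k+1}}$ says exactly that $x\alpha=y\alpha$ for all $\alpha\in I^{k+1}$ implies $x\alpha=y\alpha$ for all $\alpha\in I^{k}$; rewriting the hypothesis as $x\gamma\delta=y\gamma\delta$ for all $\gamma\in I,\delta\in I^k$, i.e. $[x]_k\gamma = [y]_k\gamma$ for all $\gamma\in I$ (using part (1)), and the conclusion as $[x]_k=[y]_k$, gives precisely the stated injectivity-type criterion, so I would just translate between the two phrasings. For (5), $A$ is $I$-separable means $\ker I=\bigcap_{\alpha\in I}\ker\alpha$ is trivial, i.e. $x\alpha=y\alpha$ for all $\alpha\in I$ forces $x=y$; this is exactly the statement that $\sim_1$ is equality. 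I would observe that $\sim_1$ equality forces $\sim_k$ equality for all $k$ (as ${\sim_k}\subseteq{\sim_1}$ since $I^k\subseteq I$ gives more constraints), hence $\sim=\sim_1$ is equality; conversely if $\sim$ is equality then so is the smaller relation obtained by intersecting, and in particular $\ker I$, identified with $\sim_1\supseteq\sim$, collapses. The cleanest formulation is to note $\sim_1 = \ker I$ directly and that $\sim$ equality is equivalent to $\sim_1$ equality via the containments, completing the proof.
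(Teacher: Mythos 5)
Parts (1), (2) and (4) of your proposal are correct and match the paper's proofs. However, there are genuine errors in (3) and (5), both stemming from confusion about the direction of the containments. Since $I^{k+1}\subseteq I^k$, the relation $\sim_{k+1}$ is defined by \emph{fewer} constraints than $\sim_k$, so the chain increases: ${\sim_1}\subseteq{\sim_2}\subseteq\cdots$. In (3) the nontrivial inclusion is therefore ${\sim_{k+2}}\subseteq{\sim_{k+1}}$, but your computation establishes only ${\sim_{k+1}}\subseteq{\sim_{k+2}}$, which is the automatic one --- and tellingly, your argument never invokes the hypothesis ${\sim_k}={\sim_{k+1}}$. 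Your closing sentence asserts that ${\sim_{k+1}}\supseteq{\sim_{k+2}}$ is ``trivial'', but that is precisely the inclusion requiring proof. The paper's argument runs: given $a\sim_{k+2}b$ and $\alpha\in I$, one has $a\alpha\gamma=b\alpha\gamma$ for all $\gamma\in I^{k+1}$, so $a\alpha\sim_{k+1}b\alpha$; the hypothesis then gives $a\alpha\sim_k b\alpha$, i.e.\ $a\alpha\delta=b\alpha\delta$ for all $\delta\in I^k$, and since every element of $I^{k+1}$ has the form $\alpha\delta$ with $\alpha\in I$, $\delta\in I^k$, this yields $a\sim_{k+1}b$.

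The same reversal sinks your (5): you claim ${\sim_k}\subseteq{\sim_1}$ because ``$I^k\subseteq I$ gives more constraints'', but $I^k\subseteq I$ gives \emph{fewer} constraints, so in fact ${\sim_1}\subseteq{\sim_k}$. Consequently ``$\sim_1$ is equality'' does not by itself force $\sim_k$ (or $\sim$) to be equality, and the implication ``$A$ is $I$-separable $\Rightarrow$ $\sim$ is equality'' genuinely requires work. The paper proves it by finite descent: if $a\sim_k b$ with $k>1$, then $a\alpha\beta=b\alpha\beta$ for all $\alpha\in I^{k-1}$ and $\beta\in I$, so separability gives $a\alpha=b\alpha$ for all $\alpha\in I^{k-1}$, i.e.\ $a\sim_{k-1}b$; iterating down to $\sim_1$ and applying separability once more yields $a=b$. (Your argument for the converse direction, that $\sim$ being equality forces $\ker I={\sim_1}$ to be trivial, is correct once the containment is written the right way round, namely ${\sim_1}\subseteq{\sim}$.)
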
 
\begin{proof} (1) Let $a,b\in A$ with $a\,\sim_1\, b$. Then $a\alpha=b\alpha$ for all $\alpha\in I$ so  that certainly the action is well-defined. Suppose then that $\alpha\in I$ and $a\sim_k b$ for some $k >1$. Then  $a\alpha\gamma=b\alpha\gamma$ for all $\gamma\in I^{k-1}\supseteq I^k$. It is then clear that $a\alpha\sim_k b\alpha$ so that the action is again well defined. It is then routine to check that it is an action by morphisms.
	
	(2) Suppose that $\alpha\in I$ and $a\sim b$. Then $a\sim_k b$ for some $k\in\mathbb{N}$, so that 
	as above  $a\alpha\sim_k b\alpha$ and hence $a\alpha\sim b\alpha$ so that the action is well defined. Again, one sees that  $I$ acts by morphisms.
	
	(3)  Suppose that for some  $k$ we have ${\sim_k} ={\sim_{k+1}}$. It suffices to show that ${\sim_{k+1}}={\sim_{k+2}}$.
	To this end, let $a,b\in A$ with $a\sim_{k+2}b$ and $\alpha\in I$. Then for
	any $\gamma\in I^{k+1}$ we have that $a\alpha\gamma=b\alpha\gamma$ so that $a\alpha\sim_{k+1}b\alpha$.
	By assumption, $a\alpha\sim_{k} b\alpha$ so that $a\alpha\delta=b\alpha\delta$ for any $\delta\in I^k$. As this holds for any
	$\alpha\in I$, we deduce that $a \sim_{k+1} b$. 
	
	(4) We have the following series of equivalent statements:
	\[\begin{array}{rcll}
	{\sim_{k}} = {\sim_{k+1}} &\Leftrightarrow& a\sim_{k+1} b\mbox{ implies that }a\sim_k b\\
	&\Leftrightarrow& a\alpha=b\alpha\mbox{ for all }\alpha\in I^{k+1} \mbox{ implies that }a\alpha=b\alpha\mbox{ for all }\alpha\in I^{k}\\
	&\Leftrightarrow& [a]_k\beta=[b]_k\beta\mbox{ for all }\beta\in I \mbox{ implies that }[a]_k=[b]_k.
	\end{array}\]
	
	(5) If $\sim$ is equality, and $a\alpha=b\alpha$ for all $a,b\in A$, then certainly $a\sim_1 b$ so that $a\sim b$ and hence $a=b$. Thus $A$ is $I$-separable.
	
	For the converse, suppose $A$ is $I$-separable and $a \sim b$. Then $a \sim_k b$ for some $k\in \mathbb{N}$ and so 
	$a\gamma=b\gamma$ for all $\gamma\in I^k$.  If $k>1$, then taking any $\alpha\in I^{k-1}$ and $\beta\in I$, we
	see that $a\alpha\beta=b\alpha\beta$, so that using $I$-separability, $a\alpha=b\alpha$. Since this is true for {\em any} $\alpha \in I^{k-1}$ this yields that $a \sim_{k-1} b$ and by finite induction, that
	$a \sim_1 b$. Again by $I$-separability, we deduce that $a=b$.
\end{proof}

\begin{definition}\label{defn:eventual} We say $A$ is {\em eventually $I$-separable} if ${\sim_k} ={\sim_{k+1}}$ for some
	$k\in \mathbb{N}$. In this case, we let $k(I)$ be the least such $k$ for which this occurs, so that ${\sim}={ \sim_{k(I)}}$.
\end{definition}

Clearly, if $I$ is periodic in the power semigroup of $\EndA$, that is, $I^h=I^{h+\ell}$ for some $h,\ell\in\mathbb{N}$, then  $A$ is eventually $I$-separable. This is certainly the case if $I=I^2$, in which case $k(I)=1$. Examples of this behaviour include the situation where $I$ is regular (that is, for all $\alpha\in I$ there exists $\beta\in I$ such that $\alpha=\alpha\beta\alpha$) or, more generally, if every $\alpha\in I$ has a right or a left identity in $I$. We will see another  situation where $I=I^2$ in Section 
\ref{sec:free algebras}.

We have seen that $I$ acts on $A/{\sim}$, but there is no reason to suppose that it acts faithfully. 

\begin{definition}\label{defn:approx} Let $\approx$ denote the congruence on $I$ induced by the
	action of $I$ on $A/{\sim}$, that is, $\alpha \approx \beta$ if and only if $a\alpha \sim a \beta$ for all $a \in A$. We write $[[\alpha]]$ to denote the $\approx$-class
	containing an element $\alpha \in I$. 
\end{definition}

\begin{remark}
	Since $I/{\approx}$ acts faithfully on $A/{\sim}$ by morphisms, via $[a][[\alpha]]=[a]\alpha=[a\alpha]$, we can view $I/{\approx}$ as embedded in  $\en(A/{\sim})$.
\end{remark}

\begin{theorem}\label{prop:fixedrepandsep} Regarding $I/{\approx}$ as a subsemigroup of $\en(A/{\sim})$, 
the quotient algebra $A/{\sim}$ is $I/{\approx}\,$-separable. If $A$ is $I$-representable then $A/{\sim}$ is $I/{\approx}\,$-representable.
\end{theorem}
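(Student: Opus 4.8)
The statement has two halves, and I would treat them separately: the representability claim is straightforward, while the separability claim carries essentially all of the difficulty. For representability the plan is simply to push everything through the quotient morphism. By Lemma~\ref{lem:start} the relation $\sim$ is a congruence, so $q\colon A\to A/{\sim}$, $a\mapsto[a]$, is a surjective morphism. Since the action is $[a]\,[[\alpha]]=[a]\alpha=[a\alpha]$ (Lemma~\ref{lem:morphismsagain}(2)), we have $\im([[\alpha]])=q(\im\alpha)$ and hence $\im(I/{\approx})=q(\im I)$. Using that a morphism satisfies $q(\langle S\rangle)=\langle q(S)\rangle$ (a one-line induction over term formation), I would conclude
\[\langle \im(I/{\approx})\rangle=\langle q(\im I)\rangle=q(\langle \im I\rangle)=q(A)=A/{\sim},\]
the third equality invoking $I$-representability in the form $\I=A$. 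Thus $A/{\sim}$ is $I/{\approx}$-representable.

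For separability the tool I would isolate first is the ``one-step'' identity
\[a\sim_{k+1}b\iff a\gamma\sim_{k}b\gamma\ \text{for all }\gamma\in I,\]
which drops out of $I^{k+1}=I\cdot I^{k}$: every element of $I^{k+1}$ is a product $\gamma\delta$ with $\gamma\in I$ and $\delta\in I^{k}$, so requiring $a\eta=b\eta$ for all $\eta\in I^{k+1}$ is the same as requiring $a\gamma\sim_k b\gamma$ for every $\gamma$. Unwinding Definition~\ref{def:repandsep}, separability of $A/{\sim}$ holds precisely when $[a]\,[[\gamma]]=[b]\,[[\gamma]]$ for all $\gamma\in I$ forces $[a]=[b]$; since $[a]\,[[\gamma]]=[a\gamma]$, this is exactly the implication
\[a\gamma\sim b\gamma\ \text{for all }\gamma\in I\ \Longrightarrow\ a\sim b,\]
and the plan is to establish this implication.

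The crux — and what I expect to be the main obstacle — is one of uniformity. The hypothesis only supplies, for each $\gamma$ separately, \emph{some} level $m_\gamma$ with $a\gamma\sim_{m_\gamma}b\gamma$, i.e.\ the ``$\forall\gamma\,\exists k$'' statement $a\gamma\sim b\gamma$; but $a\sim b$ is the strictly stronger ``$\exists k\,\forall\gamma$'' statement $a\sim_{k+1}b$ of the one-step identity, and the two genuinely differ when $\sim$ is an increasing union whose levels $m_\gamma$ are unbounded. The way I would close the gap is through the stabilisation results Lemma~\ref{lem:morphismsagain}(3),(4): as soon as ${\sim_{k_0}}={\sim_{k_0+1}}$ one has ${\sim}={\sim_{k_0}}$, i.e.\ a \emph{single} top level $k_0$, which is exactly eventual $I$-separability (Definition~\ref{defn:eventual}). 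Granting this, the hypothesis reads $a\gamma\sim_{k_0}b\gamma$ for every $\gamma$, the one-step identity yields $a\sim_{k_0+1}b$, and stabilisation collapses this to $a\sim_{k_0}b$, that is $a\sim b$, as required. I would therefore expect the whole argument to rest on controlling this stabilisation level: the passage from the pointwise condition to a uniform one is the delicate point, and some finiteness of this kind (eventual $I$-separability) appears to be precisely what makes the final collapse legitimate.
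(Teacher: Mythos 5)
Your representability argument is correct and is essentially the paper's own: the paper writes each $x\in A$ as $x=t(a_1\alpha_1,\ldots,a_n\alpha_n)$ and observes that $[x]=t([a_1][[\alpha_1]],\ldots,[a_n][[\alpha_n]])$ in $A/{\sim}$, which is exactly your computation phrased via the quotient morphism. The problem is the separability half, and it is a genuine gap. You have correctly isolated the crux, namely the passage from the pointwise statement ``for every $\gamma\in I$ there is some $k_\gamma$ with $x\gamma\sim_{k_\gamma}y\gamma$'' to the uniform statement ``there is a single $k$ with $x\gamma\sim_{k}y\gamma$ for all $\gamma$''. But you do not close that gap: you resolve it by \emph{granting} eventual $I$-separability, which is not a hypothesis of the theorem. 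Definition~\ref{defn:eventual} introduces eventual $I$-separability precisely as an extra condition that may or may not hold (the surrounding discussion gives sufficient conditions such as $I=I^2$), and other results in the section explicitly add it, or finite generation, as a hypothesis where needed. So what you have written is a correct proof of the weaker statement ``if $A$ is eventually $I$-separable, then $A/{\sim}$ is $I/{\approx}\,$-separable''; as a proof of the theorem as stated, the step ``Granting this'' is exactly where it breaks.

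For comparison, the paper's own proof runs head-on into the same point: from $x\gamma\sim y\gamma$ for all $\gamma\in I$ it asserts that ``there is a $k$ such that $x\gamma\delta=y\gamma\delta$ for all $\gamma\in I$ and $\delta\in I^k$'', i.e.\ it takes the uniform $k$ for granted, and then concludes $x\sim_{k+1}y$. Your diagnosis is therefore sharper than the paper's write-up, and the uniformity is not automatic: if the minimal levels $k_\gamma$ are unbounded as $\gamma$ ranges over $I$ (which can happen, e.g.\ for a free subsemigroup of $\T_A$ on infinitely many generators acting on a set $A$ so that $x\gamma_j$ and $y\gamma_j$ merge only after about $j$ further steps), then $x\gamma\sim y\gamma$ for every $\gamma$ while $x\not\sim y$. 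So the missing uniformity is a substantive issue, not a formality. To make your argument complete you must either supply a proof that the $k_\gamma$ can be bounded in the generality claimed (neither you nor the paper does this), or accept the additional hypothesis of eventual $I$-separability (or some comparable finiteness condition) and prove the correspondingly weakened statement, which is what your argument actually establishes.
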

\begin{proof} Suppose that $[x], [y] \in A/{\sim}$ such that
	$[x][[\gamma]] = [y][[\gamma]]$ for every $[[\gamma]] \in I /{\approx}$. The latter by definition of the action means that $[x\gamma] = [y\gamma]$ for
	every $\gamma \in I$, in other words,
	$x \gamma \sim y \gamma$ for every $\gamma \in I$. By the definition of $\sim$ this means that there is a $k$ such that $x\gamma\delta = y\gamma\delta$ for
	all $\gamma \in I$ and $\delta \in I^k$. But this means exactly that $x \alpha = y \alpha$ for every $\alpha \in I^{k+1}$, which implies that $x \sim y$,
	so that $[x]=[y]$. 
	
If $A$ is $I$-representable then we may express each $x \in A$ as $x=t(a_1\alpha_1,\ldots, a_n\alpha_n)$ for some term $t$, elements $a_i\in A$ and $\alpha_i\in I$, for $1\leq i\leq n$. It is then clear that 
	$[x]=t([a_1][[\alpha_1]],\ldots, [a_n][[\alpha_n]])$ in $A/{\sim}$, where $[a_i]\in A/{\sim}$ and $[[\alpha_i]]\in I/{\approx}$, so that $A/{\sim}$ is $I/{\approx}\,$-representable.
\end{proof} 

Combining with Theorems~\ref{cor:rep} and \ref{thm:main} we obtain:
\begin{theorem}\label{cor:hom} Suppose that $A$ is $I$-representable. Then
	the translational hull $\Omega(I/{\approx})$ is (canonically) isomorphic to the idealiser of $I/{\approx}$ in $\en(A/{\sim})$.
\end{theorem}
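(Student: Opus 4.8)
The plan is to apply Theorem~\ref{thm:main} to the pair $(A/{\sim}, I/{\approx})$. By the Remark preceding Definition~\ref{defn:approx} we already know that $I/{\approx}$ embeds in $\en(A/{\sim})$ as a genuine subsemigroup of endomorphisms, acting faithfully via $[a][[\alpha]]=[a\alpha]$. So $(A/{\sim}, I/{\approx})$ is a legitimate instance of the setup to which Theorem~\ref{thm:main} applies. Theorem~\ref{prop:fixedrepandsep} supplies exactly the two hypotheses that theorem requires: $A/{\sim}$ is always $I/{\approx}$-separable, and it is $I/{\approx}$-representable whenever $A$ is $I$-representable, which is our standing assumption.

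With both representability and separability in hand for the quotient pair, Theorem~\ref{thm:main} yields directly that
\[
\Omega(I/{\approx}) \cong S(A/{\sim},\, I/{\approx})
\]
via the natural isomorphism $\chiO^S$, where $S(A/{\sim}, I/{\approx})$ is precisely the idealiser of $I/{\approx}$ in $\en(A/{\sim})$. First I would state that we are invoking Theorem~\ref{thm:main} with $A$ replaced by $A/{\sim}$ and $I$ replaced by $I/{\approx}$; then cite Theorem~\ref{prop:fixedrepandsep} to verify the hypotheses; then read off the conclusion. This is essentially a one-line deduction once the bookkeeping of ``which algebra, which semigroup'' is made explicit.

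I anticipate no genuine mathematical obstacle here, since all the substantive work has been done in the preceding results; the only point requiring a word of care is the word \emph{canonically} in the statement. To justify it I would note that the isomorphism delivered by Theorem~\ref{thm:main} is the natural map $\chiO^S$ sending each $f\in S(A/{\sim}, I/{\approx})$ to the bi-translation $(\lambda_f,\rho_f)$ of $I/{\approx}$, which is canonical in the sense of Subsection~\ref{sub:definitions}. Thus the mild subtlety is purely one of translating the abstract ``natural isomorphism'' language of Theorem~\ref{thm:main} into the concrete claim about the idealiser, rather than any new computation.
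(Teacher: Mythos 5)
Your proposal is correct and is essentially identical to the paper's own argument: the paper derives this result by "combining" Theorem~\ref{prop:fixedrepandsep} (which supplies $I/{\approx}$-representability and $I/{\approx}$-separability of $A/{\sim}$) with Theorems~\ref{cor:rep} and~\ref{thm:main} applied to the quotient pair, exactly as you do. Your extra remark identifying the canonical isomorphism as $\chiO^S$ for the quotient pair is a faithful unpacking of what "naturally isomorphic" means in Theorem~\ref{thm:main}.
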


We now show that under some natural conditions,  $\Omega(I/{\approx})$ is a morphic image of $\Omega(I)$. 

\begin{lemma}\label{lem:nicedescription} Suppose that 
	$A$ is finitely generated or eventually $I$-separable.   Then   $\alpha \approx \beta$ if and only if  $\alpha\gamma=\beta\gamma$ for 
	some $k\in \mathbb{N}$ and all $\gamma \in I^k$.
\end{lemma}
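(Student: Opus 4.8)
The plan is to prove Lemma~\ref{lem:nicedescription} by establishing the two directions separately, with the forward direction being the one that requires the hypotheses. Recall that by Definition~\ref{defn:approx}, $\alpha \approx \beta$ means $a\alpha \sim a\beta$ for all $a \in A$, and by Definition~\ref{defn:sim}, $a\alpha \sim a\beta$ means $a\alpha\gamma = a\beta\gamma$ for all $\gamma \in I^{k_a}$ for some $k_a$ depending on $a$ (and $\alpha,\beta$). Thus the right-hand condition --- that there is a single $k$ with $\alpha\gamma = \beta\gamma$ for all $\gamma \in I^k$ --- is simply the statement that $a\alpha \sim_{k+1} a\beta$ holds for \emph{all} $a$ with a \emph{uniform} bound $k$. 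The content of the lemma is therefore that, under either hypothesis, the per-element bounds $k_a$ can be replaced by a single uniform bound.

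First I would dispose of the easy reverse direction. Suppose $\alpha\gamma = \beta\gamma$ for all $\gamma \in I^k$. Then for every $a \in A$ and every $\gamma \in I^k$ we have $(a\alpha)\gamma = a(\alpha\gamma) = a(\beta\gamma) = (a\beta)\gamma$, so $a\alpha \sim_k a\beta$, hence $a\alpha \sim a\beta$; as $a$ was arbitrary, $\alpha \approx \beta$. This direction uses neither hypothesis.

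For the forward direction I would treat the two hypotheses in turn. Assume $\alpha \approx \beta$, so for each $a \in A$ there is some $k_a \in \mathbb{N}$ with $a\alpha\gamma = a\beta\gamma$ for all $\gamma \in I^{k_a}$. In the \emph{eventually $I$-separable} case, let $k = k(I)$ from Definition~\ref{defn:eventual}, so that ${\sim} = {\sim_{k}}$ and the chain of congruences stabilises. Since $a\alpha \sim a\beta$ and $\sim$ coincides with $\sim_{k}$, we get $a\alpha\gamma = a\beta\gamma$ for all $\gamma \in I^{k}$, for \emph{every} $a$ with the \emph{same} $k$; this immediately gives $\alpha\gamma = \beta\gamma$ for all $\gamma \in I^{k+1}$ (writing any element of $I^{k+1}$ as $\delta\gamma$ with $\delta \in I$, $\gamma \in I^{k}$, or more directly noting $x(\alpha\gamma) = x(\beta\gamma)$ for all $x \in A$ when $\gamma \in I^{k}$). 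In the \emph{finitely generated} case, let $\{g_1,\dots,g_m\}$ generate $A$ and set $k = \max_i k_{g_i}$. Then $g_i\alpha\gamma = g_i\beta\gamma$ for all $\gamma \in I^{k}$ and all generators $g_i$; since $\alpha\gamma$ and $\beta\gamma$ are endomorphisms (as $\gamma \in I^{k} \subseteq I$ and $\alpha,\beta \in I$, products lie in $I \subseteq \en(A)$) that agree on a generating set, they agree on all of $A$, whence $\alpha\gamma = \beta\gamma$ for all $\gamma \in I^{k}$.

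The main obstacle is precisely the passage from pointwise agreement to uniform agreement, and it is here that each hypothesis does its work: eventual separability gives a uniform bound $k(I)$ directly from the stabilisation of the chain $\sim_k$, while finite generation gives a uniform bound by taking the maximum over finitely many generators and then invoking the morphism property to extend from generators to all of $A$. I would be careful to note that the two hypotheses yield \emph{a priori} different values of $k$, but the statement only asserts existence of some $k$, so either suffices. A minor point to check in the finitely generated argument is that the relevant maps really are endomorphisms so that agreement on generators forces global agreement; this follows since $\alpha\gamma, \beta\gamma \in I \subseteq \en(A)$.
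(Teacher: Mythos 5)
Your proof is correct and follows essentially the same route as the paper's: the easy direction is identical, and for the converse you handle the two hypotheses exactly as the paper does, using the uniform bound $k(I)$ in the eventually $I$-separable case and a maximum over finitely many generators together with agreement of the morphisms $\alpha\gamma,\beta\gamma$ on a generating set in the finitely generated case. The brief detour through $I^{k+1}$ in the eventually separable case is harmless redundancy, since your own parenthetical already gives the conclusion for $I^{k}$ directly.
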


\begin{proof} Suppose first that  $k$ exists such that
	$\alpha\gamma = \beta\gamma$ for all $\gamma \in I^k$.  Then for every $a \in A$ we have $(a\alpha)\gamma = (a\beta)\gamma$ for all $\gamma \in I^k$
	which means $a\alpha \sim a\beta$ for all $a \in A$ and so $\alpha \approx \beta$.
	
	Conversely, suppose $\alpha \approx \beta$ so that for all $a \in A$ we have $a\alpha \sim a\beta$. Let $k_a\in\mathbb{N}$ be chosen such that
	$a\alpha \sim_{k_a} a\beta$.
	Under the hypothesis that $A$ is eventually $I$-separable, Lemma~\ref{lem:morphismsagain}  gives that $a\alpha\gamma=a\beta\gamma$ for all $\gamma\in I^k$ where $k=k(I)$.  Under the hypothesis that $A$ is finitely generated, fix a generating set $X$ and choose $k\in\mathbb{N}$ to exceed $k_x$ for all $x \in X$. Then
	$x\alpha\,\sim_k\, x\beta$ for all $x\in X$, giving $x\alpha\gamma=x\beta\gamma$ for all $\gamma\in I^k$. Since
	$\alpha\gamma$ and $\beta\gamma$ are morphisms agreeing on a generating set, this implies that $a \alpha\gamma = a\beta\gamma$ for all $\gamma \in I^k$ and $a \in A$. In either case,  $\alpha\gamma = \beta\gamma$ for all $\gamma \in I^k$.
\end{proof}

\begin{lemma}\label{prop:rhobar} Let $I$ be a subsemigroup of $\en(A)$.
For all $(\lambda,\rho)\in  \Omega(I)$ 
	\[\overline{\rho}\colon I/{\approx}  \to  I/{\approx}, \,\, [[\alpha]]\overline{\rho}=[[\alpha\rho]]\]
	is a right translation of $I/{\approx}$.
	\end{lemma}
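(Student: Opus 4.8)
The plan is to verify the two defining properties of a right translation of $I/{\approx}$: first that $\overline{\rho}$ is well-defined on $\approx$-classes, and second that it satisfies the right-translation identity $([[\alpha]][[\beta]])\overline{\rho}=[[\alpha]]([[\beta]]\overline{\rho})$. The second property is the routine one. Since $\approx$ is a congruence on $I$ we have $[[\alpha]][[\beta]]=[[\alpha\beta]]$, so using that $\rho$ is a right translation of $I$ (so that $(\alpha\beta)\rho=\alpha(\beta\rho)$) I would compute $([[\alpha]][[\beta]])\overline{\rho}=[[(\alpha\beta)\rho]]=[[\alpha(\beta\rho)]]=[[\alpha]][[\beta\rho]]=[[\alpha]]([[\beta]]\overline{\rho})$, as required.

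The main obstacle is well-definedness, that is, showing $\alpha\approx\beta$ implies $\alpha\rho\approx\beta\rho$; this is where the full strength of the hypothesis $(\lambda,\rho)\in\Omega(I)$ (rather than merely $\rho\in\Rho(I)$) is needed, via the linking identity. First I would record two auxiliary facts. (i) Because $(\lambda,\rho)$ is linked, $(\alpha\rho)\delta=\alpha\lambda(\delta)$ for every $\delta\in I$; this converts a post-composition of $\alpha\rho$ into a post-composition of $\alpha$. (ii) Since $\lambda$ is a left translation, $\lambda(\tau_1\cdots\tau_m)=\lambda(\tau_1)\tau_2\cdots\tau_m$ with $\lambda(\tau_1)\in I$, whence $\lambda(I^m)\subseteq I^m$ for every $m\in\N$.

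With these in hand I would argue as follows. Fix $a\in A$. From $\alpha\approx\beta$ (Definition~\ref{defn:approx}) we have $a\alpha\sim a\beta$, so there is some $m\in\N$, depending on $a$, with $a\alpha\sim_m a\beta$, i.e.\ $(a\alpha)\nu=(a\beta)\nu$ for all $\nu\in I^m$. Now for any $\delta\in I^m$, fact (ii) gives $\lambda(\delta)\in I^m$, and then fact (i) yields $a(\alpha\rho)\delta=a\alpha\lambda(\delta)=a\beta\lambda(\delta)=a(\beta\rho)\delta$. Hence $a(\alpha\rho)\sim_m a(\beta\rho)$, so $a(\alpha\rho)\sim a(\beta\rho)$; as $a$ was arbitrary this is precisely $\alpha\rho\approx\beta\rho$, establishing that $\overline{\rho}$ is well-defined.

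I expect the only delicate point to be the bookkeeping with the quantifier on $m$: the witness $m$ depends on the chosen element $a$, but this causes no difficulty, since we only need $a(\alpha\rho)\sim a(\beta\rho)$ for each fixed $a$ separately, and $\sim=\bigcup_k\sim_k$ absorbs this dependence. I would also flag that the argument must use the original Definition~\ref{defn:approx} of $\approx$ rather than the cleaner description in Lemma~\ref{lem:nicedescription}, as no finiteness or eventual-separability hypothesis is available here.
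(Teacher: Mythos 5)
Your proof is correct and follows essentially the same route as the paper: both establish well-definedness by using the linking identity to rewrite $a(\alpha\rho)\delta$ as $a\alpha\lambda(\delta)$ and the left-translation property to keep the relevant elements inside $I^k$, then note the translation identity is routine. Your fact (ii), that $\lambda(I^m)\subseteq I^m$, is just a slightly cleaner packaging of the paper's decomposition $\gamma\delta$ with $\gamma\in I$, $\delta\in I^{k-1}$, and your remark about the $a$-dependence of the witness $m$ being absorbed by $\sim=\bigcup_k\sim_k$ is exactly the point the paper relies on implicitly.
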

	\begin{proof} We first show that $\overline{\rho}$ is well defined. To this end, suppose that $\alpha,\beta\in I$ with
	$\alpha\approx \beta$. Let $a\in A$. From $a\alpha\,\sim\, a\beta$ we have that $a\alpha\gamma=a\alpha\gamma$ for all $\gamma\in I^k$, where 
	$k\geq 1$. Then for all $\gamma\in I,\delta\in I^{k-1}$ we have
	\[\begin{array}{crcl}
	& a\alpha\lambda(\gamma)\delta&=&a\beta\lambda(\gamma)\delta\\
	\Rightarrow& a(\alpha\rho)\gamma\delta&=&a(\beta\rho)\gamma\delta\\
	\Rightarrow &a(\alpha\rho)&\sim& a(\beta\rho).
	\end{array}\]
	As this is true for all $a$, we have that $\alpha\rho\approx\beta\rho$. A routine check shows that $\overline{\rho}$ is a right translation of
	$I/{\approx}$.
	\end{proof}
	
\begin{lemma}\label{lem:thereduction} Let $I$ be a subsemigroup of $\en(A)$ and let $f\in T=T(A,I)$. Define $\overline{f}\in\T_{A/{\sim}}$ by $[a]\overline{f} = [af]$ for all $a\in A$.
Then $\overline{f}$ is an $I/{\approx}$-morphism and so lies in $T(A/{\sim},I/{\approx})$, and if  $f\in S=S(A,I)$, then $\overline{f}\in S(A/{\sim},I/{\approx})$.
Consequently, $(\lambda_{\bar{f}}, \rho_{\bar{f}})\in \Omega(I/{\approx})$.
\end{lemma}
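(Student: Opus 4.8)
The plan is to verify the assertions in the order they are stated: well-definedness of $\overline{f}$, membership of $\overline{f}$ in the idealiser $T(A/{\sim},I/{\approx})$ (which is what ``being an $I/{\approx}$-morphism'' amounts to), the sharpening to $S(A/{\sim},I/{\approx})$ when $f$ is itself a morphism, and finally the conclusion via Proposition~\ref{prop:realise}. The single structural fact I would isolate at the outset and use repeatedly is that, since $f\in T$ satisfies $fI\subseteq I$, for any $\gamma=\gamma_1\cdots\gamma_k\in I^k$ we have $f\gamma=(f\gamma_1)\gamma_2\cdots\gamma_k\in I^k$, because $f\gamma_1\in I$.

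First I would establish that $\overline{f}$ is well defined, which I expect to be the crux of the argument. Suppose $a\sim b$, so that $a\sim_k b$ for some $k\in\mathbb{N}$, meaning $a\gamma=b\gamma$ for all $\gamma\in I^k$. For any such $\gamma$ the element $f\gamma$ again lies in $I^k$ by the fact above, whence $(af)\gamma=a(f\gamma)=b(f\gamma)=(bf)\gamma$; thus $af\sim_k bf$ and so $af\sim bf$. Consequently $\overline{f}\colon[a]\mapsto[af]$ is a well-defined transformation of $A/{\sim}$.

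Next I would show $\overline{f}\in T(A/{\sim},I/{\approx})$ by computing the two one-sided products directly. For each $[[\alpha]]\in I/{\approx}$ and all $a\in A$ we have $[a](\overline{f}\,[[\alpha]])=[af][[\alpha]]=[af\alpha]=[a][[f\alpha]]$ and $[a]([[\alpha]]\,\overline{f})=[a\alpha]\overline{f}=[a\alpha f]=[a][[\alpha f]]$, so that $\overline{f}\,[[\alpha]]=[[f\alpha]]$ and $[[\alpha]]\,\overline{f}=[[\alpha f]]$. Since $f\in T$ forces $f\alpha,\alpha f\in I$, both products lie in $I/{\approx}$; as the elements of $I/{\approx}$ act on $A/{\sim}$ by morphisms (Lemma~\ref{lem:morphismsagain}), $\overline{f}$ restricts to a morphism on each image $\im[[\alpha]]$ and lies in the two-sided idealiser $T(A/{\sim},I/{\approx})$.

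Finally I would treat the morphism case and draw the conclusion. If $f\in S$ then $f\in\en(A)$, and for any term $t$ one computes $t([a_1],\ldots,[a_n])\overline{f}=[t(a_1,\ldots,a_n)f]=[t(a_1f,\ldots,a_nf)]=t([a_1]\overline{f},\ldots,[a_n]\overline{f})$, using that $f$ preserves $t$; hence $\overline{f}\in\en(A/{\sim})$ and, combined with the idealiser conditions above, $\overline{f}\in S(A/{\sim},I/{\approx})$. In either case $\overline{f}$ lies in the relevant idealiser, so applying Proposition~\ref{prop:realise} to the algebra $A/{\sim}$ with subsemigroup $I/{\approx}$ yields $(\lambda_{\overline{f}},\rho_{\overline{f}})\in\Omega(I/{\approx})$, as required. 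The only genuinely delicate point is the well-definedness step, where one must use the idealiser property to push $f$ through the powers $I^k$ defining $\sim$; the remaining steps are routine bookkeeping with the conventions.
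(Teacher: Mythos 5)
Your proposal is correct and follows essentially the same route as the paper: the crux is the identical well-definedness argument, pushing $f$ through a product $\gamma=\gamma_1\cdots\gamma_k\in I^k$ via $f\gamma=(f\gamma_1)\gamma_2\cdots\gamma_k\in I^k$ so that $af\sim_k bf$. The remaining verifications (that $\overline{f}$ idealises $I/{\approx}$, preserves terms when $f\in\en(A)$, and hence induces a bi-translation) are exactly the routine steps the paper dispatches with ``it follows in a standard manner''.
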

\begin{proof} Let $f\in T=T(A,I)$; we need to show that ${\bar{f}}$ is well defined. To this end, note that if 
$a,b\in A$ and $a\,\sim\, b$, then there exists $k$ such that $a\gamma=b\gamma$ for all $\gamma\in I^k$. But choosing
$\gamma=\gamma_1\dots \gamma_k$ where $\gamma_i\in I$ for $1\leq i\leq k$ we note that
\[af\gamma=af\gamma_1\dots \gamma_k=a(f\gamma_1)\dots \gamma_k=b(f\gamma_1)\dots \gamma_k=bf\gamma_1\dots \gamma_k=
bf\gamma,\]
since $f\gamma_1\in I$. Thus $af\,\sim\, bf$ so that $\bar{f}$ is well defined. It follows in a standard manner that if $f\in S=S(A,I)$, then $\overline{f}\in S(A/{\sim},I/{\approx})$.
\end{proof}

\begin{proposition}\label{prop:thereduction} Let $I$ be a subsemigroup of $\en(A)$. Suppose that $A$ is any of finitely generated, $I$-representable or eventually $I$-separable.
\begin{enumerate}\item There is a natural homomorphism 
	$\sigma : \Omega(I) \to \Omega(I / {\approx})$ given by $(\lambda,\rho)\sigma = (\bar\lambda,\bar\rho)$
	where $\bar\lambda, \bar\rho \colon I/{\approx} \to I/{\approx}$ are well-defined by 
	$$\bar\lambda([[\alpha]]) = [[\lambda(\alpha)]] \quad \text{ and } \quad [[\alpha]]\bar\rho = [[\alpha \rho]]$$
	for all $[[\alpha]] \in I/{\approx}$.
	\item If $\phi\in T(A, I)$, then $(\lambda_\phi, \rho_\phi)\sigma = (\lambda_{\bar{\phi}}, \rho_{\bar{\phi}})$.
\end{enumerate}
\end{proposition}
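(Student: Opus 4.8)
The plan is to check, for part (1), that each pair $(\overline{\lambda},\overline{\rho})$ genuinely lies in $\Omega(I/{\approx})$ and that the assignment $\sigma$ respects products; for part (2), to identify $\overline{\rho_\phi}$ and $\overline{\lambda_\phi}$ with $\rho_{\overline{\phi}}$ and $\lambda_{\overline{\phi}}$ by comparing their actions on $A/{\sim}$. The right-hand component needs no new work: Lemma~\ref{prop:rhobar} already gives that $\overline{\rho}$ is a well-defined right translation of $I/{\approx}$ for every $(\lambda,\rho)\in\Omega(I)$, with no hypothesis on $A$. Moreover, linkedness of $(\overline{\lambda},\overline{\rho})$ will be immediate: since $\approx$ is a congruence on $I$, one computes $[[\alpha]]\,\overline{\lambda}([[\beta]]) = [[\alpha\lambda(\beta)]]$ and $([[\alpha]]\overline{\rho})[[\beta]] = [[(\alpha\rho)\beta]]$, and these agree because $(\lambda,\rho)$ is itself linked, i.e.\ $\alpha\lambda(\beta)=(\alpha\rho)\beta$. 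Likewise, once $\overline{\lambda}$ is known to be well defined, the left-translation identity $\overline{\lambda}([[\alpha]][[\beta]])=\overline{\lambda}([[\alpha]])[[\beta]]$ drops out of the corresponding identity $\lambda(\alpha\beta)=\lambda(\alpha)\beta$ for $\lambda$.

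Thus the real content of part (1) is the well-definedness of $\overline{\lambda}$, namely that $\alpha\approx\beta$ forces $\lambda(\alpha)\approx\lambda(\beta)$; I expect this to be the main obstacle, and it is where the three alternative hypotheses enter, so I would split into cases. If $A$ is $I$-representable, Lemma~\ref{lem:lbalanced} gives that $\lambda$ is left-balanced, so for each $a\in A$ there is $x_a\in A$ with $a\lambda(\alpha)=x_a\alpha$ for all $\alpha\in I$; hence $a\lambda(\alpha)=x_a\alpha \sim x_a\beta = a\lambda(\beta)$ directly from $\alpha\approx\beta$, and as $a$ was arbitrary $\lambda(\alpha)\approx\lambda(\beta)$. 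If instead $A$ is finitely generated or eventually $I$-separable, I would invoke the concrete description of $\approx$ from Lemma~\ref{lem:nicedescription}: there $\alpha\approx\beta$ is equivalent to $\alpha\gamma=\beta\gamma$ for all $\gamma$ in some $I^k$, whence $\lambda(\alpha)\gamma = \lambda(\alpha\gamma)=\lambda(\beta\gamma)=\lambda(\beta)\gamma$ for all $\gamma\in I^k$, and another application of Lemma~\ref{lem:nicedescription} yields $\lambda(\alpha)\approx\lambda(\beta)$. With $\overline{\lambda}$ well defined, the pair $(\overline{\lambda},\overline{\rho})$ is a bi-translation by the remarks above, so $\sigma$ is a well-defined map into $\Omega(I/{\approx})$; that it is a homomorphism follows from the routine identities $[[\alpha]]\,\overline{\rho_1\rho_2}=[[(\alpha\rho_1)\rho_2]]=[[\alpha]]\,\overline{\rho_1}\,\overline{\rho_2}$ and its dual $\overline{\lambda_1\lambda_2}([[\alpha]])=[[\lambda_1(\lambda_2(\alpha))]]=(\overline{\lambda_1}\,\overline{\lambda_2})([[\alpha]])$ for the left components.

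For part (2), since $\phi\in T(A,I)$ we have $(\lambda_\phi,\rho_\phi)\in\Omega(I)$, and Lemma~\ref{lem:thereduction} supplies $\overline{\phi}\in T(A/{\sim},I/{\approx})$ together with the fact that $(\lambda_{\overline{\phi}},\rho_{\overline{\phi}})\in\Omega(I/{\approx})$. It therefore remains only to check the two equalities $\overline{\rho_\phi}=\rho_{\overline{\phi}}$ and $\overline{\lambda_\phi}=\lambda_{\overline{\phi}}$ in $\Omega(I/{\approx})$, and I would do this by exploiting the faithfulness of the action of $I/{\approx}$ on $A/{\sim}$ recorded after Definition~\ref{defn:approx}. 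Concretely, $[[\alpha]]\overline{\rho_\phi}=[[\alpha\phi]]$ while $[[\alpha]]\rho_{\overline{\phi}}=[[\alpha]]\,\overline{\phi}$; comparing their effect on an arbitrary class $[a]$ gives $[a\alpha\phi]$ in both cases, so faithfulness forces $[[\alpha\phi]]=[[\alpha]]\,\overline{\phi}$ and hence $\overline{\rho_\phi}=\rho_{\overline{\phi}}$. The dual computation, using $\lambda_\phi(\alpha)=\phi\alpha$ together with $[a](\overline{\phi}\,[[\alpha]])=[a\phi\alpha]=[a][[\phi\alpha]]$, gives $\overline{\lambda_\phi}=\lambda_{\overline{\phi}}$, completing the identification.
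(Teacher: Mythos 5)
Your proof is correct and follows essentially the same route as the paper's: Lemma~\ref{prop:rhobar} for $\overline{\rho}$, a case split on the three hypotheses for the well-definedness of $\overline{\lambda}$ (with Lemma~\ref{lem:nicedescription} handling the finitely generated and eventually $I$-separable cases), routine verifications of linkedness and the homomorphism property, and Lemma~\ref{lem:thereduction} plus a pointwise comparison on $A/{\sim}$ for part (2). The only difference is cosmetic: in the $I$-representable case you invoke left-balancedness (Lemma~\ref{lem:lbalanced}) to get the element $x_a$ directly, whereas the paper re-runs the underlying term computation with $b=t(x_1(\alpha_1\rho),\dots,x_n(\alpha_n\rho))$, which is exactly that same $x_a$; your packaging is slightly cleaner but the argument is identical.
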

\begin{proof}
(1) From Lemma~\ref{prop:rhobar} we know that $\overline{\rho}$ is a well-defined right translation of $I/{\approx}$. We turn our attention to showing that
$\overline{\lambda}$ is well defined. To this end, let  $\alpha, \beta\in I$ with  $\alpha\approx \beta$.  

Suppose first that $A$ is finitely generated or eventually $I$-separable. By 
Lemma~\ref{lem:nicedescription}, there exists $k\in\N$ such that
	$\alpha\gamma = \beta\gamma$ for all $\gamma \in I^k$.   Since $\lambda$ is a left translation of $I$, we have
	$$(\lambda(\alpha))\gamma = \lambda(\alpha\gamma) =\lambda(\beta\gamma) = (\lambda(\beta))\gamma$$
	for all $\gamma \in I^k$, which means by Lemma~\ref{lem:nicedescription} again that $\lambda (\alpha) \approx \lambda (\beta)$, as required to
	show that $\bar\lambda$ is well-defined.
	
	Now consider the other hypothesis, that $A$ is $I$-representable. Let $a\in A$. We may write 
	$a=t(x_1\alpha_1,\dots, x_n\alpha_n)$ for some term $t$, $x_i\in A$ and $\alpha_i\in I$, where $1\leq i\leq n$.
	Let $b=t(x_1\alpha_1\rho,\dots, x_n\alpha_n\rho)$. By assumption that $\alpha\approx \beta$ we have
	$b\alpha\,\sim\, b\beta$, so that $b\alpha\gamma=b\beta\gamma$ for all $\gamma\in I^k$, for some $k\geq 1$.
	For every $\gamma\in I^k$ we deduce:
	\[\begin{array}{rcl}
	a\lambda(\alpha)\gamma&=&t(x_1\alpha_1,\dots, x_n\alpha_n)\lambda(\alpha)\gamma\\
	&=&t(x_1\alpha_1\lambda(\alpha),\dots, x_n\alpha_n\lambda(\alpha))\gamma\\
	&=&t(x_1(\alpha_1\rho)\alpha,\dots, x_n(\alpha_n\rho)\alpha)\gamma\\
	&=&t(x_1(\alpha_1\rho),\dots, x_n(\alpha_n\rho))\alpha\gamma= b\alpha\gamma= b\beta\gamma = \cdots=a\lambda(\beta)\gamma.
	\end{array}\]
	Thus $a\lambda(\alpha)\,\sim\, a\lambda(\beta)$ and since $a$ was arbitrary it follows that $\lambda(\alpha)\approx\lambda(\beta)$.
	Hence $\overline{\lambda}$ is well defined.

As for $\overline{\rho}$, it is easy to see that $\overline{\lambda}$ is a left translation and that 
$(\overline{\lambda},\overline{\rho})$ form a bi-translation.

	Thus, the map $\sigma : \Omega(I) \to \Omega(I/{\approx})$ is well-defined.
	
	Finally, if the images of the bi-translations $(\lambda, \rho)$, $(\lambda',\rho')$ and $(\lambda\lambda', \rho\rho')$ in $\Omega(I)$ are respectively $(\bar{\lambda}, \bar{\rho})$, $(\bar{\lambda'},\bar{\rho'})$ and $(\overline{\lambda\lambda'}, \overline{\rho\rho'})$ in $\Omega(I/{\approx})$, then for all $[[\alpha]]\in I/{\approx}$ we have that 
	$$ \overline{\lambda\lambda'}([[\alpha]]) = [[(\lambda\lambda')(\alpha)]] = [[\lambda(\lambda'(\alpha))]] = \bar{\lambda}([[\lambda'(\alpha)]]) = \bar{\lambda}(\bar{\lambda'}([[\alpha]])),$$
	so that $\overline{\lambda\lambda'} = \bar{\lambda}\bar{\lambda'}$. Dually, 
	 $\overline{\rho\rho'} = \bar{\rho}\bar{\rho'}$.
	Therefore $\sigma$ is a morphism from $\Omega(I)$ to $\Omega(I/{\approx})$.

(2) From Lemma~\ref{lem:thereduction} we know that $(\lambda_{\bar{\phi}}, \rho_{\bar{\phi}})\in\Omega(I/{\approx})$. It remains to show that 
$(\lambda_{\bar{\phi}}, \rho_{\bar{\phi}}) =(\lambda_\phi,\rho_\phi)\sigma$. To see this, let $a\in A$, $\alpha\in I$. Then
\[ [a] \overline{\lambda_\phi}( [[\alpha]]) = [a] [[\lambda_\phi (\alpha)]] = [a] [[\phi\alpha]]
				 = [a \phi \alpha] = [a\phi] [[\alpha]] = [a] \overline{\phi} [[\alpha]] 
				= [a] \lambda_{\overline{\phi}} ([[\alpha]]), \]
showing that $\overline{\lambda_\phi} = \lambda_{\overline{\phi}}$. A dual argument for $\rho_\phi$ gives us that $\overline{\rho_\phi} = \rho_{\overline{\phi}}$ and thus $(\lambda_\phi,\rho_\phi)\sigma = (\overline{\lambda_\phi},\overline{\rho_\phi})=(\lambda_{\overline{\phi}},\rho_{\overline{\phi}})$.	
\end{proof}

In view of Corollary~\ref{cor:hom} and Proposition~\ref{prop:thereduction} the following questions are natural.

\begin{question} In the case where
	$A$ is finitely generated or eventually $I$-separable, can $\Omega(I)$ be described as an extension  of the subsemigroup $\Omega(I/{\approx})$
	of $\en(A/{\sim})$?
\end{question}

\begin{definition}\label{defn:lift} Let $A$ be an algebra and let $I$ be a subsemigroup of $\en(A)$. We say that
$\hat{f}\in\fullTA$ is a {\em lift} of $f\in T(A/{\sim}, I/{\approx})$ if $[a\hat{f}]=[a]f$ for all $a\in A$.
\end{definition}

\begin{question}
	Is the idealiser of $I/{\approx}$ exactly the set of endomorphisms of $A/{\sim}$ which lift to endomorphisms of $A$?
\end{question}

We end this section by considering how lifting maps as above would work.

\begin{lemma}\label{lem:lifting}  Let $A$ be an algebra and $I$ be a subsemigroup of $\en(A)$ such that $A$ is eventually $I$-separable, and let $k=k(I)$. Let
	 $f \in T(A/{\sim}, I/{\approx})$ and let $\hat{f}: A \rightarrow A$ be a lift of $f$. 
	\begin{enumerate}
		\item For each $\alpha \in I^k$ there is a well-defined map $f\star \alpha : A \rightarrow A$ given by $f \star \alpha := \hat{f}\alpha$, which is independent of the choice of lift $\hat{f}$. Furthermore, if $f$ is an endomorphism, then so is $f\star\alpha$.
		\item If  $\hat{f}$ can be chosen to lie in $T(A,I)$,  then  $f \star \alpha  \in I^{k}$ (and hence $f \star \alpha \in I$). 
	\end{enumerate} 
\end{lemma}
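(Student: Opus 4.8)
The plan is to lean throughout on the defining property of $k=k(I)$, namely that ${\sim}={\sim_k}$ (see Definition~\ref{defn:eventual}), so that $x\sim y$ is equivalent to $x\gamma=y\gamma$ for all $\gamma\in I^k$. A lift $\hat f$ only records the behaviour of $f$ up to $\sim$, and this equivalence is exactly what lets us upgrade $\sim$-statements to genuine equalities once we postcompose with an element of $I^k$; this is also why the hypothesis in both parts is $\alpha\in I^k$ rather than merely $\alpha\in I$.

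For the well-definedness and independence in (1), I would take two lifts $\hat f,\hat g$ of $f$. For every $a\in A$ we have $[a\hat f]=[a]f=[a\hat g]$, that is $a\hat f\sim a\hat g$. Since ${\sim}={\sim_k}$ and $\alpha\in I^k$, postcomposing yields $a\hat f\alpha=a\hat g\alpha$ for all $a$, whence $\hat f\alpha=\hat g\alpha$ as maps. This shows $f\star\alpha:=\hat f\alpha$ is independent of the chosen lift. For the morphism claim, suppose $f$ is an endomorphism of $A/{\sim}$, let $t$ be an $n$-ary term and $a_1,\dots,a_n\in A$. Using that the quotient map $A\to A/{\sim}$ is a morphism, that $\hat f$ is a lift, and that $f$ is an endomorphism,
\[[t(a_1,\dots,a_n)\hat f]=t([a_1],\dots,[a_n])f=t([a_1]f,\dots,[a_n]f)=[t(a_1\hat f,\dots,a_n\hat f)],\]
so $t(a_1,\dots,a_n)\hat f\sim t(a_1\hat f,\dots,a_n\hat f)$. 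Postcomposing with $\alpha\in I^k$ and invoking ${\sim}={\sim_k}$ turns this into an equality, and since $\alpha\in\en(A)$ is itself a morphism we may pull $\alpha$ inside $t$ on the right-hand side, obtaining $t(a_1,\dots,a_n)\hat f\alpha=t(a_1\hat f\alpha,\dots,a_n\hat f\alpha)$. As $t,a_1,\dots,a_n$ were arbitrary, $f\star\alpha=\hat f\alpha$ is an endomorphism.

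For (2), assume $\hat f$ can be chosen in $T(A,I)$, and write $\alpha=\alpha_1\cdots\alpha_k$ with each $\alpha_i\in I$. Since $\hat f$ lies in the idealiser of $I$ in $\fullTA$, we have $\hat f\alpha_1\in I$, so that
\[f\star\alpha=\hat f\alpha=(\hat f\alpha_1)\alpha_2\cdots\alpha_k\]
is a product of $k$ elements of $I$ and therefore lies in $I^k\subseteq I$. The independence established in (1) ensures that this value of $f\star\alpha$ does not depend on having used a lift in $T(A,I)$, so the conclusion holds for $f\star\alpha$ as defined.

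Every step is short, and I do not expect a genuine obstacle; the only point requiring care is the repeated, disciplined use of ${\sim}={\sim_k}$ to move between the congruence $\sim$ (all that a lift sees) and honest equality after postcomposition by an element of $I^k$, together with the consequent insistence that $\alpha\in I^k$ throughout. The morphism part of (1) is the most bookkeeping-heavy, since it chains the quotient morphism, the lift identity, and the endomorphism property of $f$, but it presents no real difficulty.
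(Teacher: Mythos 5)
Your proposal is correct and follows essentially the same route as the paper's proof: lift-independence and the morphism property of $f\star\alpha$ are both obtained by establishing a $\sim$-relation and then postcomposing with $\alpha\in I^k$ to upgrade it to equality (using ${\sim}={\sim_{k(I)}}$), and part (2) is the identical factorisation $\hat f\alpha=(\hat f\alpha_1)\alpha_2\cdots\alpha_k$. No gaps.
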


\begin{proof}
	(1) Let  $\hat{f}: A \rightarrow A$ be a lift of $f$. For any $\alpha \in I^k$, we clearly have that $\hat{f}\alpha$ is a well-defined map $A\rightarrow A$. Suppose then that $\hat{f}$ and $\bar{f}$ are lifts of $f$, so  $a\hat{f} \sim a \bar{f}$  for all $a \in A$. Since $\alpha \in I^k$, this gives $a\hat{f}\alpha = a \bar{f} \alpha$ for all $a \in A$, that is $\hat{f}\alpha = \bar{f}\alpha$. Thus the definition of $f \star \alpha$ does not depend upon the choice of lift. 
	In the case where $f$ is an endomorphism, for all terms $t(a_1,\dots,a_n)$ with $a_i\in A$ we have that:
	\begin{align*}
	[t(a_1\hat{f},\dots,a_n\hat{f})] = t([a_1\hat{f}],\dots,[a_n\hat{f}])
	&= t([a_1]f,\dots, [a_n]f) &\text{since $\hat{f}$ is a lift of $f$}\\
	&= t([a_1],\dots,[a_n])f &\text{as }f\in\en(A/{\sim})\\
	&= [t(a_1,\dots,a_n)]f &\text{since $\sim$ is a congruence} \\
	&= [t(a_1,\dots,a_n)\hat{f}] &\text{by definition of $\hat{f}$}.
	\end{align*}
	So for all $\alpha\in I^k$, we have $t(a_1\hat{f},\dots,a_n\hat{f})\alpha = t(a_1,\dots,a_n)\hat{f}\alpha$, and hence (since $\alpha$ is an endomorphism) we have $t(a_1\hat{f}\alpha, \dots, a_n\hat{f}\alpha) = t(a_1,\dots,a_n)\hat{f}\alpha$. Therefore  $\hat{f}\alpha\in\en(A)$.
	
	(2) If $\hat{f}\in T(A,I)$ and $\alpha=\alpha_1\cdots \alpha_k\in I^k$, then $\hat{f}\alpha=
	(\hat{f}\alpha_1)\alpha_2\cdots \alpha_k\in I^k$.
\end{proof}

As a converse to (1) in Lemma~\ref{lem:lifting} we have the following. 

\begin{lemma}\label{lem:leftimorphims}  
Let $A$ be an algebra and $I$ be a subsemigroup of $\en(A)$ such that $A$ is eventually $I$-separable. Let $k=k(I)$, and let $\bar{f} \in \fullTA$.  Suppose further that $A$ is $I$-representable, that $(\lambda,\rho_{\bar{f}})\in\Omega(I)$ and that $\bar{f}\alpha$ is a morphism for all $\alpha\in I$. Then $\bar{f}$ is a lift of a morphism $f $ of $A/{\sim}$.
\end{lemma}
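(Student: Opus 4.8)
The plan is to produce an endomorphism $f$ of $A/{\sim}$ and then verify the lifting identity $[a\bar{f}]=[a]f$ of Definition~\ref{defn:lift} directly. The natural candidate is the map $[a]f=[a\bar{f}]$, and I would organise the argument around the two auxiliary maps $g,h\colon A\to A/{\sim}$ given by $ag=[a\bar{f}]$ and $ah=[a]f$.

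First I would record that, although $\bar{f}$ itself need not be a morphism of $A$, the composite $\bar{f}\gamma$ \emph{is} an endomorphism of $A$ for every $\gamma\in I^{k}$: writing $\gamma=\gamma_1\cdots\gamma_k$, the map $\bar{f}\gamma_1$ is a morphism by hypothesis and $\gamma_2\cdots\gamma_k\in\en(A)$, so $\bar{f}\gamma=(\bar{f}\gamma_1)(\gamma_2\cdots\gamma_k)\in\en(A)$. Using eventual $I$-separability, so that ${\sim}={\sim_k}$, this lets me push $\bar{f}$ through terms modulo ${\sim}$: for any term $t$, elements $a_1,\dots,a_n\in A$ and $\gamma\in I^k$, both $t(a_1,\dots,a_n)\bar{f}\gamma$ and $t(a_1\bar{f},\dots,a_n\bar{f})\gamma$ equal $t(a_1\bar{f}\gamma,\dots,a_n\bar{f}\gamma)$, the first because $\bar{f}\gamma\in\en(A)$ and the second because $\gamma\in\en(A)$. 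Hence $t(a_1,\dots,a_n)\bar{f}\sim t(a_1\bar{f},\dots,a_n\bar{f})$, which shows that $g$ is a morphism.

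Next I would pin down $f$ and match $g$ with $h$ on a generating set. Applying the homomorphism $\sigma$ of Proposition~\ref{prop:thereduction} to $(\lambda,\rho_{\bar{f}})\in\Omega(I)$ gives a bi-translation of $I/{\approx}$ whose right component sends $[[\alpha]]$ to $[[\alpha\bar{f}]]$ by Lemma~\ref{prop:rhobar}. Since $A/{\sim}$ is both $I/{\approx}$-representable and $I/{\approx}$-separable by Theorem~\ref{prop:fixedrepandsep}, Proposition~\ref{prop:BSEP} (or Theorem~\ref{thm:main}) provides $f\in\en(A/{\sim})$ with $\rho_f$ equal to this right component. Evaluating the equality of these endomorphisms of $A/{\sim}$ at each $[a]$ yields $[a\alpha]f=[a\alpha\bar{f}]$ for all $a\in A$ and $\alpha\in I$; that is, $g$ and $h$ agree on $\im I$. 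As $A$ is $I$-representable, $\im I$ generates $A$, so the two morphisms $g$ and $h$ from $A$ to $A/{\sim}$ coincide. Therefore $[a\bar{f}]=[a]f$ for all $a\in A$, which is exactly the assertion that $\bar{f}$ is a lift of $f$.

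The main obstacle is the first step. Since $\bar{f}$ is only assumed to satisfy that $\bar{f}\alpha$ is a morphism for $\alpha\in I$, rather than being a morphism itself, it is not immediate either that $a\mapsto[a\bar{f}]$ respects the operations or that it is constant on ${\sim}$-classes. Both difficulties are resolved by the interplay of the two standing hypotheses: eventual $I$-separability collapses ${\sim}$ to ${\sim_k}$, after which the morphism property of $\bar{f}\gamma$ for $\gamma\in I^k$ lets $\bar{f}$ commute with terms up to ${\sim}$; constancy on classes then comes for free by matching $g$ against the genuine quotient endomorphism $h$ on the generating set $\im I$, sidestepping a direct proof that $a\sim b$ forces $a\bar{f}\sim b\bar{f}$.
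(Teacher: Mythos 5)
Your proof is correct, but it takes a genuinely different route from the paper's. The paper works entirely at the level of $A$: using $I$-representability to write $a=t(x_1\alpha_1,\dots,x_n\alpha_n)$ and the linked-pair identity $\alpha_i\lambda(\alpha)=(\alpha_i\rho_{\bar f})\alpha$, it derives the single identity $a\lambda(\alpha)=a\bar f\alpha$ (the final step being where the hypothesis that $\bar f\alpha$ is a morphism enters), and deduces that $a\sim b$ forces $a\bar f\sim b\bar f$ because $\lambda(\alpha)\in I^k$ whenever $\alpha\in I^k$; it then defines $f$ by $[a]f=[a\bar f]$ and asserts, rather tersely, that $f$ is a morphism. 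You invert the emphasis: you first verify explicitly that $a\mapsto[a\bar f]$ is a morphism $A\to A/{\sim}$ (via $\bar f\gamma\in\en(A)$ for $\gamma\in I^k$ together with ${\sim}={\sim_k}$), and you obtain constancy on ${\sim}$-classes indirectly, by importing an endomorphism $f$ of $A/{\sim}$ from the quotient machinery (Proposition~\ref{prop:thereduction}, Theorem~\ref{prop:fixedrepandsep}, Theorem~\ref{thm:main}) and matching the two morphisms on the generating set $\im I$. Your route leans on the heavier results of Section~\ref{sec:nosep} where the paper's is a self-contained computation with the linked left translation, but in compensation it makes explicit the morphism verification the paper leaves implicit and isolates cleanly where each of the three hypotheses is used. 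Both arguments are sound.
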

\begin{proof} We show first that $a \sim b$ implies that $a\bar{f} \sim b\bar{f}$. To this end, write
$a=t(x_1\alpha_1,\dots, x_n\alpha_n)$ and consider that for all $\alpha\in I^k$
\[\begin{array}{rcl}
a\lambda(\alpha)&=&t(x_1\alpha_1,\dots, x_n\alpha_n)\lambda(\alpha)\\
&=&t(x_1\alpha_1\lambda(\alpha),\dots, x_n\alpha_n\lambda(\alpha))\\
&=&t(x_1(\alpha_1\rho_{\bar{f}})\alpha,\dots, x_n(\alpha_n\rho_{\bar{f}})\alpha)\\
&=&t(x_1(\alpha_1\bar{f})\alpha,\dots, x_n(\alpha_n\bar{f})\alpha)\\
&=&t(x_1\alpha_1(\bar{f}\alpha),\dots, x_n\alpha_n(\bar{f}\alpha))\\
&=&t(x_1\alpha_1,\dots, x_n\alpha_n)\bar{f}\alpha\\
&=&a\bar{f}\alpha.\end{array}\]
It follows that if $a \sim b$ then 
$a\bar{f}\sim b\bar{f}$. Now defining $f$ by $[a]f=[a\bar{f}]$, we know that 
$f$ is a morphism and $\bar{f}$ is by definition a lift of $f$.
\end{proof}

Suppose now that $I$ is an ideal and $A$ is $I$-representable but not necessarily $I$-separable. By Lemma~\ref{lem: right_trans} and Theorem~\ref{cor:rep} we know that 
$$\chiR: \en(A) \hookrightarrow  \Rhot(I) \cong \Omega(I).$$
In particular, every right translation of $I$ is linked to at most one left translation of $I$. 

Let $(\lambda, \rho) \in \Omega(I)$. The main work in the previous section was to construct a \emph{well-defined endomorphism} $f: A \rightarrow A$ such that   $t(a_1, \ldots, a_n)\alpha f=t(a_1, \ldots, a_n)(\alpha\rho)$ for all $a_i \in A$, all $\alpha \in I$ and all terms $t$. The existence of such an endomorphism allowed us to conclude that $\rho=\rho_f$. The $I$-representability and $I$-separability conditions together allowed us to construct $f$ by setting $t(x_1\alpha_1, \ldots, x_k\alpha_k)f:=t(x_1(\alpha_1\rho), \ldots, x_k(\alpha_k\rho))$. We are now in a position to show that if we have both these conditions we can construct an endomorphism of $A/{\sim}$. 

\begin{proposition}\label{prop:lift}
	Let $A$ be an algebra and $I$ an ideal of $\en(A)$ such that $A$ is $I$-representable, and let  $\rho \in \Rhot(I)$.
	\begin{enumerate}
		\item There exists a well-defined morphism $f_\rho: A/{\sim} \rightarrow A/{\sim}$ defined by $$[t(x_1\alpha_1, \ldots, x_k\alpha_k)]f_\rho:=[t(x_1(\alpha_1\rho), \ldots, x_k(\alpha_k\rho))].$$
		\item In the case where $A$ is eventually $I$-separable with $k(I)=1$, the unique left translation paired with $\rho$ is defined by $\lambda(\alpha) = f_\rho\star \alpha$.
	\end{enumerate} 
\end{proposition}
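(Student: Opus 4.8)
The plan is to prove part (1) by transporting $\rho$ to the quotient $A/{\sim}$, where the separability machinery of Section~\ref{sec:maps} applies verbatim, and to prove part (2) by a direct computation that exploits the hypothesis $k(I)=1$ to collapse $\sim$ to $\sim_1$.

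\textbf{Part (1).} Since $\rho\in\Rhot(I)$, there is a left translation $\lambda$ with $(\lambda,\rho)\in\Omega(I)$. Because $A$ is $I$-representable, Proposition~\ref{prop:thereduction} applies, and its image $(\lambda,\rho)\sigma=(\bar\lambda,\bar\rho)$ lies in $\Omega(I/{\approx})$; in particular $\bar\rho$ is a \emph{linked} right translation of $I/{\approx}$, acting by $[[\alpha]]\bar\rho=[[\alpha\rho]]$. By Theorem~\ref{prop:fixedrepandsep} the algebra $A/{\sim}$ is $I/{\approx}$-separable, and as $A$ is $I$-representable it is also $I/{\approx}$-representable, so that $\langle\im(I/{\approx})\rangle=A/{\sim}$. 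Then Proposition~\ref{prop:BSEP}(1) shows $\bar\rho$ is strongly right-balanced, and Proposition~\ref{prop:rtranslationsbymorphisms} produces a morphism $f_\rho\in\en(A/{\sim})$ with $\bar\rho=\rho_{f_\rho}$, determined on a representation $[a]=t([x_1][[\alpha_1]],\dots,[x_k][[\alpha_k]])$ by $[a]f_\rho=t([x_1]([[\alpha_1]]\bar\rho),\dots,[x_k]([[\alpha_k]]\bar\rho))$. Unwinding the action via $[[\alpha_i]]\bar\rho=[[\alpha_i\rho]]$ and $[x_i][[\beta]]=[x_i\beta]$, and using that $\sim$ is a congruence, recovers exactly the stated formula, with $[a]=[t(x_1\alpha_1,\dots,x_k\alpha_k)]$. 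The genuinely delicate point, which I expect to be the heart of the matter, is that $f_\rho$ is \emph{well-defined}: the output must be independent of the chosen representation of $a$. This is precisely the assertion that $\bar\rho$ is strongly right-balanced, which is exactly why $I/{\approx}$-separability of $A/{\sim}$ is the crucial input. (One could also argue well-definedness directly, since for a fixed value $a$ and every $\gamma\in I$ both candidate outputs multiply to $a\lambda(\gamma)$, forcing them into the same $\sim_1$-class; but routing through Proposition~\ref{prop:BSEP} keeps the bookkeeping minimal.)

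\textbf{Part (2).} Now assume $A$ is eventually $I$-separable with $k(I)=1$, so that $\sim={\sim_1}$ and $I^{k(I)}=I$. Consequently, for any lift $\hat{f_\rho}$ of $f_\rho$ and any $\alpha\in I$, the map $f_\rho\star\alpha=\hat{f_\rho}\alpha$ is defined and independent of the lift by Lemma~\ref{lem:lifting}(1), and it is an endomorphism since $f_\rho$ is. I would first record uniqueness of the partner: as $I$ is an ideal and $A$ is $I$-representable, $I$ is left $\en(A)$-reductive by Theorem~\ref{cor:rep}(5), hence left reductive, so $\piR$ is injective and $\rho$ is linked to exactly one $\lambda$. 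To identify it, fix $a\in A$ and write $a=t(x_1\gamma_1,\dots,x_k\gamma_k)$ using $I$-representability. Using that $\lambda(\alpha)$ is a morphism together with the linking identity $\gamma_i\lambda(\alpha)=(\gamma_i\rho)\alpha$, one computes $a\lambda(\alpha)=t(x_1(\gamma_1\rho)\alpha,\dots,x_k(\gamma_k\rho)\alpha)$. On the other hand, part (1) gives $[a\hat{f_\rho}]=[a]f_\rho=[t(x_1(\gamma_1\rho),\dots,x_k(\gamma_k\rho))]$, that is $a\hat{f_\rho}\sim t(x_1(\gamma_1\rho),\dots,x_k(\gamma_k\rho))$; here is where $k(I)=1$ enters, since $\sim={\sim_1}$ lets me apply $\alpha\in I$ and upgrade this to the genuine equality $a\hat{f_\rho}\alpha=t(x_1(\gamma_1\rho)\alpha,\dots,x_k(\gamma_k\rho)\alpha)$, the last step using that $\alpha$ is a morphism. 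Comparing the two expressions yields $a\lambda(\alpha)=a(\hat{f_\rho}\alpha)=a(f_\rho\star\alpha)$ for every $a$, whence $\lambda(\alpha)=f_\rho\star\alpha$. Note that $f_\rho\star\alpha\in I$ then comes for free, since it equals $\lambda(\alpha)$.

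\textbf{Main obstacle.} The substantive difficulty is the well-definedness in part (1); once it is secured by reducing to the $I/{\approx}$-separable quotient, the rest is translation of notation. In part (2) the only subtle move is the passage from the $\sim$-statement to an equality of maps, which is exactly the place the hypothesis $k(I)=1$ (equivalently $\sim={\sim_1}$) is indispensable.
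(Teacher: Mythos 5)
Your proposal is correct and follows essentially the same route as the paper: part (1) passes to the quotient $I/{\approx}$ acting on $A/{\sim}$ via Proposition~\ref{prop:thereduction}, invokes $I/{\approx}$-representability and $I/{\approx}$-separability of $A/{\sim}$ (Theorem~\ref{prop:fixedrepandsep}) to realise $\bar\rho$ by an endomorphism with the stated formula, and part (2) computes $b\lambda(\beta)=s(y_1\alpha_1\rho,\dots,y_p\alpha_p\rho)\beta=b\hat{f}\beta$ exactly as the paper does, with $k(I)=1$ used at the same point to upgrade the $\sim$-relation to equality after applying $\beta\in I$. The only (harmless) cosmetic difference is that you cite Propositions~\ref{prop:BSEP} and \ref{prop:rtranslationsbymorphisms} directly where the paper cites Theorem~\ref{thm:main}, which is built from them.
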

\begin{proof} (1) Since $A$ is $I$-representable, from Proposition~\ref{prop:thereduction} we have
that $\bar{\rho}\in \Rhot(I/{\approx})$ where as usual we regard $I/{\approx}$ as a subsemigroup of $\en(A/{\sim})$. By Theorem \ref{prop:fixedrepandsep}, the quotient algebra $A/{\sim}$ is both $I/{\approx}\,$-representable and $I/{\approx}\,$-separable, and so by Theorem~\ref{thm:main} there is a morphism $f_{\rho} \in \en(A/{\sim})$ such that
$\bar{\rho}=\rho_{f_{\rho}}$. Moreover, from the proof of Proposition~\ref{prop:rtranslationsbymaps}, we know that $f_{\rho}$ must be given by
\[t([x_1][[\alpha_1]],\dots, [x_n][[\alpha_n]])f_{\rho}=t([x_1][[\alpha_1]]\bar{\rho},\dots, [x_n][[\alpha_n]\bar{\rho}]).\]
The claim now follows.

	(2)  By $I$-representability we know that there is a unique left translation $\lambda$ paired with $\rho$. We show that for all $\beta \in I$ we have $\lambda(\beta) = f_\rho \star \beta$.
	
	First notice that for all $b=s(y_1\alpha_1, \ldots, y_p \alpha_p) \in A$ and all $\beta \in I$ we have
	\begin{eqnarray*}
		b\lambda(\beta)	 = s(y_1 \alpha_1, \ldots, y_p \alpha_p)\lambda(\beta) &=& s(y_1 \alpha_1\lambda(\beta), \ldots, y_p \alpha_p\lambda(\beta))\\
		&=& s(y_1 (\alpha_1\rho)\beta, \ldots, y_p (\alpha_p\rho)\beta)
		= s(y_1 \alpha_1\rho, \ldots, y_p \alpha_p\rho)\beta.
	\end{eqnarray*}

	Now let $\hat{f}: A \rightarrow A$ be any lift of $f_\rho$, that is, $[a\hat{f}] = [a]f_\rho$ for all $a \in A$. Then, by the definition of $f_\rho$ and the fact that $\hat{f}$ is a lift of $f_\rho$ we have:
	$$[s(y_1 \alpha_1\rho, \ldots, y_p \alpha_p\rho)] = [s(y_1 \alpha_1, \ldots, y_p \alpha_p)]f_\rho = [b]f_\rho = [b\hat{f}].$$
	Thus for all $\beta \in I$ we have that 
	$$b\lambda(\beta) = s(y_1 \alpha_1\rho, \ldots, y_p \alpha_p\rho)\beta =  b\hat{f}\beta,$$
	where the first equality comes from the previous paragraph, and the second equality follows from the fact that $\beta \in I$ cannot distinguish elements in the same $\sim$-class.
\end{proof}

\begin{remark}
Suppose that $A$ is $I$-representable and eventually $I$-separable with $k(I)=1$. Then by the previous lemma we may define a function from $\Rhot(I)$ to $\en(A/{\sim})$ by $\rho \mapsto f_\rho$ and it is straightforward to check that this is a semigroup morphism. Moreover, by part (2) of Proposition~\ref{prop:lift} we have that $f_{\rho_1} = f_{\rho_2}$ if and only if $\rho_1$ and $\rho_2$ are linked to the same left translation $\lambda$. In the case where $A$ is actually $I$-separable, by definition of $\sim$ we have that $A = A/{\sim}$, and it follows from the results of the previous section that each left translation is paired with at most one right translation. In this case, $f_\rho$ is an endomorphism of $A$ and the morphism $\rho \mapsto f_\rho$ is just the inverse of the morphism from $\en(A)$ to $\Rhot(I)$ given by $f \mapsto \rho_f$.
\end{remark}

\section{Examples and applications}\label{sec:free algebras}

We begin by presenting three examples of  general situations and behaviours that arise later in our specific examples. These are essentially folklore. For Example~\ref{lright zero} we refer the reader to \cite{Tamura:1958}.
\begin{example}
	\label{full}
	If $I = \en(A)$, then it is clear that $\Omega(I) \cong \en(A)$. Indeed, for every monoid $M$ with identity element $e$ we have $\Omega(M) \cong M$. For, if  $\rho$ is a right translation of $M$, then in particular, $a\rho =a(e\rho)$ for all $a \in M$. Thus each right translation of $M$ is a translation by right multiplication by some element $f=e\rho \in M$; call this map $\rho_f$. Likewise, each left translation of $M$ is a translation by left multiplication by some element $g=\lambda(e) \in M$; call this map $\lambda_g$. Moreover the pair $(\lambda_g, \rho_f)$ is linked if and only if $f=g$ since the requirement that $e\lambda_ge = e\rho_f e$ gives $f=g$.\end{example}

\begin{example}
	\label{null}
	Suppose that $I$ is a null semigroup, that is, there exists $\gamma \in I$ such that $\alpha\beta = \gamma$ for all $\alpha, \beta \in I$. Then it is straightforward to see that \emph{any} left/right map fixing $\gamma$ is a left/right translation, and moreover that \emph{any two such maps are linked}. Indeed, a right map $\rho$ is a right translation if and only if for all $\alpha,\beta \in I$ we have $\alpha(\beta\rho) = (\alpha\beta)\rho$, which since $I$ is null is equivalent to $\gamma=\gamma\rho$. A dual argument holds for left translations. Moreover, it is clear that if $\lambda$ and $\rho$ are left/right maps fixing $\gamma$, then for all $\alpha,\beta \in I$ we have $\alpha(\lambda(\beta)) =\gamma = (\alpha \rho) \beta$. Thus the translational hull of $I$ is
$$\Omega(I) = \{(\lambda, \rho): \lambda, \rho: I \rightarrow I, \lambda(\gamma)=\gamma=\gamma \rho\}.$$
\end{example}

	\begin{example}
		\label{lright zero} Suppose that $I$ is a right zero semigroup, that is, $\alpha\beta=\beta$ for all $\alpha\in I$. In this case, 
		for every map $f:I \rightarrow I$ we have $\rho_f$ is a right translation, and the only left translation is the identity map $\iota$ on $I$.
		Then $\Omega(I)$ consists of all pairs $(\iota,\rho_f)$ where $f\in \fullTA[I]$ and hence $\Omega(I)$ is isomorphic to $\fullTA[I]$.
		The result for left zero semigroups is dual.
\end{example}

\subsection{Free Algebras}\label{sub:free}

We now consider the special case where $A$ is freely generated by a set $X$ (that is, every map $X\rightarrow A$ extends (necessarily uniquely) to some 
$\alpha\in\en(A)$) and where  $I$ is one of the following ideals of $\en(A)$:
$$I_k:=\{ \alpha\in\en(A): \im\alpha\subseteq \langle Y\rangle, \mbox{ for some }Y \subseteq A, |Y| < k\},$$
for some cardinal $k$. The ideal $I_1$ is the set of endomorphisms whose images contain only constants of the algebra. If $k>|X|$, then it is clear that $I_k = \en(A)$. Thus we will often assume that $k\leq |X|$. Notice that $A$ is $I_k$-representable if and only if $k\geq 2$, or $k=1$ and $A=C$, where $C=\langle \emptyset\rangle$ is the subalgebra generated by the images of the basic nullary operations; recall that our convention is that $C=\emptyset $ if $A$ has no such operations. On the other hand, we may not necessarily have $I_k$-separability, as this happens only when $I_k$ is right reductive, as given by the following lemma. 

\begin{lemma}\label{lem:SEP in free alg} Let $A$ be a free algebra and 
suppose that $k \geq 2$. The following are equivalent:
\begin{enumerate}
	\item $A$ is $I_k$-separable;
	\item $I_k$ is right $\fullTA$-reductive;
	\item $I_k$ is right $\en(A)$-reductive;
	\item $I_k$ is right reductive.
\end{enumerate}
\end{lemma}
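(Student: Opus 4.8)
The plan is to establish the cycle of implications $(1)\Rightarrow(2)\Rightarrow(3)\Rightarrow(4)\Rightarrow(1)$, exploiting the machinery already built in Theorem~\ref{cor:rep} to collapse most of the work. The key observation is that several of these equivalences are free, or nearly so. First, part~(6) of Theorem~\ref{cor:rep} states precisely that $A$ being $I$-separable is equivalent to $I$ being right $\fullTA$-reductive, so $(1)\Leftrightarrow(2)$ is immediate and requires no further argument here. Next, the implications $(2)\Rightarrow(3)\Rightarrow(4)$ follow from the general remarks after Definition~\ref{defn:reductive}: right $U$-reductivity for a larger oversemigroup $U$ (here $\fullTA \supseteq \en(A) \supseteq I_k$) forces right $U'$-reductivity for any intermediate oversemigroup $U'$ containing $I_k$, and in particular right reductivity of $I_k$ as an abstract semigroup. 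So the only genuine content lies in closing the loop with $(4)\Rightarrow(1)$, or equivalently $(4)\Rightarrow(2)$ or $(3)\Rightarrow(2)$.

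The substantive step is therefore to show that abstract right reductivity of $I_k$ (the weakest condition, (4)) already forces $I_k$-separability of $A$ (condition (1)). I would argue the contrapositive: assume $A$ is \emph{not} $I_k$-separable, so $\ker I_k$ is nontrivial, meaning there exist distinct $a,b\in A$ with $a\gamma = b\gamma$ for all $\gamma\in I_k$. The goal is to produce two distinct elements $\alpha,\beta\in I_k$ with $\alpha\delta=\beta\delta$ for all $\delta\in I_k$, contradicting right reductivity. The natural construction uses freeness: since $A$ is freely generated by $X$ with $k\geq 2$, I can build endomorphisms $\alpha,\beta\in I_k$ whose images sit inside a subalgebra generated by fewer than $k$ elements and which are chosen so that $\alpha$ and $\beta$ send some fixed generator to $a$ and $b$ respectively (and agree elsewhere). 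Because $a\gamma=b\gamma$ for every $\gamma\in I_k$, postcomposing $\alpha$ and $\beta$ by any $\delta\in I_k$ yields equal maps on all generators, hence $\alpha\delta=\beta\delta$; yet $\alpha\neq\beta$ since they disagree on a generator. This is where $k\geq 2$ is essential: one needs at least one "slot" of freedom to encode the pair $(a,b)$ while keeping the image small enough to remain in $I_k$.

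The main obstacle I anticipate is the careful construction of $\alpha$ and $\beta$ in the borderline case, ensuring simultaneously that both genuinely lie in $I_k$ (their images generated by strictly fewer than $k$ elements) and that they are distinct as endomorphisms. One must check that the elements $a,b$ witnessing non-separability can be realised as images of generators under maps with suitably small image—this is where the structure of $I_k$ and the hypothesis $k\geq 2$ interact most delicately, and where freeness is used to extend a map defined on $X$ to a genuine endomorphism. Once this construction is in place, the verification that $\alpha\delta=\beta\delta$ for all $\delta$ is a routine check using that $\delta$ is determined by its action on generators and that $a\delta=b\delta$. I would expect the bulk of the written proof to consist of citing Theorem~\ref{cor:rep}(6) for $(1)\Leftrightarrow(2)$, the post-Definition~\ref{defn:reductive} remarks for the descending chain $(2)\Rightarrow(3)\Rightarrow(4)$, and then a single paragraph executing the contrapositive construction above to recover $(4)\Rightarrow(1)$.
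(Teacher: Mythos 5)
Your overall architecture is exactly the paper's: $(1)\Leftrightarrow(2)$ is quoted from part (6) of Theorem~\ref{cor:rep}, the chain $(2)\Rightarrow(3)\Rightarrow(4)$ is immediate from the definitions of $U$-reductivity, and the only real work is $(4)\Rightarrow(1)$, which both you and the paper handle by manufacturing a pair of endomorphisms in $I_k$ that right reductivity forces to coincide. So the route is the same; the issue is with the specific construction you sketch for that last step.

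You propose $\alpha,\beta\in I_k$ sending some fixed generator $x_0$ to $a$ and $b$ respectively and \emph{agreeing on the remaining generators}. In the borderline case $k=2$ this is generally impossible: membership in $I_2$ requires the image to lie in a subalgebra $\langle c\rangle$ generated by a \emph{single} element, so if $\beta$ sends $x_0$ to $b$ and agrees with $\alpha$ elsewhere (say sending the other generators into $\langle a\rangle$), then $\im\beta$ must lie in a one-generated subalgebra containing both $b$ and elements of $\langle a\rangle$, which need not exist. You correctly flag this as the delicate point but do not resolve it, and as stated the construction fails precisely where $k\geq 2$ is sharpest. The fix is to drop the ``agree elsewhere'' requirement altogether, as the paper does: define $\alpha$ by $x\alpha=a$ for \emph{all} $x\in X$ and $\beta$ by $x\beta=b$ for all $x\in X$ (using freeness to extend). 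Then $\im\alpha\subseteq\langle a\rangle$ and $\im\beta\subseteq\langle b\rangle$ are each one-generated, so $\alpha,\beta\in I_k$ for every $k\geq 2$; if $a\gamma=b\gamma$ for all $\gamma\in I_k$ then $\alpha\gamma=\beta\gamma$ on the free generating set, hence $\alpha\gamma=\beta\gamma$, and right reductivity gives $\alpha=\beta$, whence $a=x\alpha=x\beta=b$. Distinctness of $\alpha$ and $\beta$ never needed them to agree anywhere. (Minor remark: the paper's citation of ``part (7)'' of Theorem~\ref{cor:rep} is a typo for part (6), which is the part you correctly invoke.)
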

\begin{proof}
Suppose that (1) holds. Then by part (7) of Lemma~\ref{lem:reductivity from rep or sep} we have that $I_k$ is right $\fullTA$-reductive so that (2) holds. That (2) implies (3) and (3) implies (4) is immediate from the definitions.

Now assume that (4) holds. Let $a, b\in A$ with $a \neq b$, and consider the maps $\alpha,\beta\in\EndA$ defined by $x\alpha = a$ and $x\beta = b$ for all $x\in X$.
Then $\im\alpha\subseteq \clotX[a]$ and $\im\beta\subseteq \clotX[b]$, which shows that $\alpha,\beta\in I_k$ since $k \geq 2$.
Suppose for contradiction that $a\gamma = b\gamma$ for all $\gamma\in I_k$.
Then we get that $x\alpha\gamma = x\beta\gamma$ for all $x\in X$, and thus $\alpha\gamma = \beta\gamma$ for all $\gamma\in I_k$ as $A$ is freely generated by $X$.
Since $I_k$ is right reductive, it follows that $\alpha = \beta$, thus  for every $x \in X$ we have $a = x\alpha = x\beta = b$, a contradiction.
Therefore for each $a, b\in A$ with $a \neq b$, there exists $\gamma\in I_k$ with $a\gamma\neq b\gamma$, that is, $A$ is $I_k$-separable.
\end{proof}

Lemma \ref{lem:SEP in free alg} demonstrates that for the most natural ideals $I_k$ of $\en(A)$, where $A$ is a free algebra, the conditions of $I_k$-separability and right reductivity are the same. For an algebra $A$ and ideal $I$ of $\en(A)$ more generally, this need not be the case as the following example illustrates. 

\begin{example}\label{ex:rightreductivenosep} We use a standard construction from the theory of semigroups; for further details see, for example, \cite{Howie:1995}. Let $A$ be a semigroup that has decomposition as  a semilattice $Y$ of non-trivial groups $G_{\alpha}$, $\alpha\in Y$. This means that $Y$ is a commutative semigroup of idempotents, such that $A=\bigcup_{y\in Y}G_{y}$, each $G_y$ is a non-trivial subgroup of $A$, and $G_y G_z\subseteq G_{yz}$. We identify $y\in Y$ with the identity $e_{y}$ of $G_{y}$.   Let 
	\[I=\{\alpha\in\en(A): \im\alpha\subseteq Y\}.\]
	Clearly $I$  is a left ideal of $\en(A)$. Since $Y$ is characteristic in $A$, that is, $Y\alpha\subseteq Y$ for all $\alpha\in\en(A)$, we also have that $I$ is a right ideal, hence an ideal, of $\en(A)$.
	To see that $I$ is right reductive, consider the map $\delta\in I$ defined by $g_y\delta=y$ for all $y\in Y$, $g_y\in G_y$. 
	Then, if $\alpha,\beta\in I$ and $\alpha\gamma=\beta\gamma$ for all $\gamma\in I$, we get that $\alpha\delta=\beta\delta $.  Since $\delta$ is the identity on the image of any element of $I$, we obtain $\alpha=\beta$, so that $I$ is right reductive. 
	However, $A$ is not $I$-separable. Indeed, let $y\in Y$ and let $g_y\in G_y$ with $g_y\neq y$. 
	Since any morphism of $A$ preserves subgroups, and the subgroups of $\im\gamma$ for each $\gamma\in I$ are trivial, we must have that $g_y\gamma=y\gamma$ for each $\gamma\in I$. Since $I$ is right reductive it follows from \cite{Petrich:1970} that $\Omega(I)$ is naturally isomorphic to $\Lambdat(I)$. \end{example}

\begin{question}\label{qn:rightreductivenosep} In Example~\ref{ex:rightreductivenosep}, $I$ is not right $\en(A)$-reductive. 
	Does there exist an algebra $A$ and an ideal $I$ of $\en(A)$ such that $I$ is right $\en(A)$-reductive but $A$ is not $I$-separable (or even not weakly $I$-separable, in the sense defined in Remark~\ref{remark:weaksep})?\end{question}

\begin{proposition}
Let $A$ be a free algebra, and $k \geq 2$ be such that $I_k$ is right reductive. Then $\Omega(I_k)$ is canonically isomorphic to $\en(A)$.
\end{proposition}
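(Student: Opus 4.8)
The plan is to recognise this statement as an immediate consequence of Theorem~\ref{thm:main}, once the two standing hypotheses of that theorem have been verified for the pair $(A, I_k)$, together with the observation that the idealiser $S(A,I_k)$ collapses to $\en(A)$. So the work is entirely a matter of assembling previously established facts in the right order.

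First I would check $I_k$-representability. Since $k \geq 2$, the discussion preceding Lemma~\ref{lem:SEP in free alg} records that $A$ is $I_k$-representable; concretely, for any $a \in A$ the endomorphism $\alpha$ determined on generators by $x \mapsto a$ lies in $I_k$ (its image is contained in the one-generated subalgebra $\clotX[a]$), whence $a = x\alpha \in \im I_k \subseteq \I$, giving $\I = A$. Next I would obtain $I_k$-separability for free: by Lemma~\ref{lem:SEP in free alg}, under the standing assumption $k \geq 2$ the four conditions listed there are equivalent, so the hypothesis that $I_k$ is right reductive (condition (4)) is precisely equivalent to $A$ being $I_k$-separable (condition (1)).

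With both $I_k$-representability and $I_k$-separability in hand, Theorem~\ref{thm:main} applies and yields $\Omega(I_k) \cong S(A,I_k)$ via the natural isomorphism $\chiO^{\en(A)}\colon f \mapsto (\lambda_f, \rho_f)$, where $S(A,I_k)$ denotes the idealiser of $I_k$ in $\en(A)$. Finally, since $I_k$ is a two-sided ideal of $\en(A)$, the idealiser of $I_k$ in $\en(A)$ --- being the largest subsemigroup of $\en(A)$ in which $I_k$ is an ideal --- is the whole of $\en(A)$; that is, $S(A,I_k) = \en(A)$. Combining these observations gives the asserted canonical isomorphism $\Omega(I_k) \cong \en(A)$.

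There is no substantial obstacle here, as the result is a corollary of the framework already developed; the proof is a verification of hypotheses rather than a new argument. The only points requiring genuine care are the correct reading of $I_k$-representability (which really does use $k \geq 2$, and would fail for $k = 1$ unless $A = C$) and the identification $S(A,I_k) = \en(A)$, which rests solely on $I_k$ being an ideal rather than merely a subsemigroup. One could instead route through part (6) of Theorem~\ref{cor:rep}, which gives $\OmegaS(I) \cong S(A,I)$ under $I$-separability alone; but this yields an isomorphism onto $\OmegaS(I)$ only, and one would still need $I_k$-representability to ensure $\Omega(I_k) = \OmegaS(I_k)$, so invoking Theorem~\ref{thm:main} is the cleaner route since it packages both the realisation of every bi-translation by endomorphisms and the naturality of the resulting isomorphism.
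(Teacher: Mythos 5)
Your proof is correct and follows essentially the same route as the paper: invoke the discussion preceding Lemma~\ref{lem:SEP in free alg} for $I_k$-representability, use that lemma to convert right reductivity into $I_k$-separability, and apply Theorem~\ref{thm:main}. The only difference is that you spell out the step $S(A,I_k)=\en(A)$ (which holds because $I_k$ is an ideal), a point the paper's one-line proof leaves implicit.
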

\begin{proof}
By Lemma \ref{lem:SEP in free alg} $A$ is $I_k$-separable, so Theorem \ref{thm:main} applies to give that $\Omega(I_k) = \{(\lambda_f, \rho_f): f \in \en(A)\} \cong \en(A)$. 
\end{proof}
Our next goal is to show that for all $k \geq 2$ regardless of whether $I_k$ is right reductive we  have that every bi-translation of $I_k$ is realised by {\em transformations} (which are not necessarily morphisms) of $A$. 

\begin{lemma}\label{lem:Ik} 
\begin{enumerate}
\item For every $k\leq|X|$ we have $I_k=I_k^2$.
\item If $k \geq 2$, then every right translation of $I_k$ is right-balanced and moreover $$\Omega(I_k) = \{(\lambda_f, \rho_f): f \in T(A, I_k)\} \cong T(A, I_k)/{\equiv_{\im I_k}}.$$
\end{enumerate}\end{lemma}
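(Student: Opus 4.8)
The plan is to establish (1) by an explicit factorisation and then obtain (2) by applying Proposition~\ref{prop:suffmaps}. Since $I_k$ is a subsemigroup of $\en(A)$ we have $I_k^2\subseteq I_k$ for free, so for (1) it suffices to factor an arbitrary $\alpha\in I_k$. Write $\im\alpha\subseteq\langle Y\rangle$ with $Y\subseteq A$ and $|Y|<k\le|X|$. As $|Y|<|X|$ I can fix an injection $y\mapsto x_y$ of $Y$ into the free generating set $X$, set $X_Y=\{x_y:y\in Y\}$, and use freeness to define $\gamma\in\en(A)$ by $x_y\gamma=y$ (sending the remaining generators to a fixed element of $\langle Y\rangle$), so that $\im\gamma\subseteq\langle Y\rangle$ and hence $\gamma\in I_k$. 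Expressing each $x\alpha\in\langle Y\rangle$ as a term $t_x(y_1,\dots,y_m)$ in elements of $Y$, I then define $\beta\in\en(A)$ by $x\beta=t_x(x_{y_1},\dots,x_{y_m})$; this gives $\im\beta\subseteq\langle X_Y\rangle$ with $|X_Y|<k$, hence $\beta\in I_k$, and $x\beta\gamma=t_x(y_1,\dots,y_m)=x\alpha$ on every generator, so $\alpha=\beta\gamma\in I_k^2$. (When $Y=\emptyset$ one argues more directly: $\im\alpha\subseteq C$ is fixed pointwise by every endomorphism, so $\alpha=\alpha^2\in I_k^2$.)

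For (2) I assume $k\ge2$ and verify the hypotheses of Proposition~\ref{prop:suffmaps} with $I=I_k$. The condition $I_k^2=I_k$ is (1). For each $c\in A$, freeness provides the endomorphism $\gamma_c$ with $x\gamma_c=c$ for all $x\in X$; since every element of $A$ is a term in the generators, $\im\gamma_c\subseteq\langle\{c\}\rangle$, and as $|\{c\}|=1<k$ we get $\gamma_c\in I_k$. This is precisely the family of morphisms required by the hypothesis of Proposition~\ref{prop:suffmaps}. Moreover $c=x\gamma_c\in\im\gamma_c\subseteq\im I_k$ for every $c\in A$, so $\im I_k=A$; in particular $\langle\im I_k\rangle=A$ and $A$ is $I_k$-representable, the remaining hypothesis.

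Proposition~\ref{prop:suffmaps}(1) then gives that every right translation of $I_k$ is right-balanced, and Proposition~\ref{prop:suffmaps}(2) gives $\Omega(I_k)=\OmegaS[T](I_k)=\{(\lambda_f,\rho_f):f\in T(A,I_k)\}\cong T(A,I_k)/{\equiv_{\im I_k}}$, since $\equiv_{\im}$ relative to $I_k$ is exactly $\equiv_{\im I_k}$. I expect the only real difficulty to be the factorisation in (1): because $Y$ is an arbitrary subset of $A$ rather than a set of free generators, the two factors must be routed through chosen generators $x_y$, and freeness is what lets both of them be realised inside $I_k$. Once (1) is in hand, (2) is a routine check of the hypotheses of Proposition~\ref{prop:suffmaps}, with the constraint $k\ge2$ entering exactly to guarantee the constant endomorphisms lie in $I_k$.
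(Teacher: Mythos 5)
Your proof is correct and follows essentially the same route as the paper: the same factorisation $\alpha=\beta\gamma$ through a copy of $Y$ inside the free generating set for part (1), and the same verification of the hypotheses of Proposition~\ref{prop:suffmaps} (using the constant endomorphisms $\gamma_c$, which exist in $I_k$ precisely because $k\geq 2$) for part (2). Your explicit treatment of the $Y=\emptyset$ case, where $\im\alpha\subseteq C$ forces $\alpha=\alpha^2$, is a small point the paper's argument glosses over, but it does not change the substance.
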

\begin{proof}(1) Let $\alpha\in I_k$, so that $\im\alpha\subseteq \langle  Y\rangle$, where $|Y|<k$. 
	Let $Y=\{ y_j:j\in J\}$ where $|J|=|Y|$.
	For each $x\in X$ let $x\alpha=t_x(y_{j_1},\dots, y_{j_n})$ where $n\geq 0$ and $y_{j_\ell}\in Y$ for $1\leq \ell \leq n$. 
	Let $X=Z\cup W$ where $|Y|=|Z|$ and $Z\cap W=\emptyset$, write $Z=\{z_j:j\in J\}$ and let $y\in Y$ be fixed. Now define $\beta,\gamma\in \en(A)$ as follows:
	\[x\beta=t_x(z_{j_1},\ldots, z_{j_n})\,\,\mbox{ for }x\in X \]
	and
	\[z_j\gamma=y_j, \,\, j\in J\mbox{ and } w\gamma=y \mbox{ for all } w \in W.\]
	 It is easy to see that $\beta,\gamma\in I_k$ and $\alpha=\beta\gamma$. 

(2) This follows immediately from Proposition \ref{prop:suffmaps}, noting that if $k \geq 2$ then for each $c \in A$ there exists a morphism in $I_k$ with $x \gamma_c = c$ for all $x \in X$, and so in particular $A$ is $I_k$-representable.
\end{proof}

The proof given in Proposition \ref{prop:suffmaps} can be modified slightly to show that in the case of a free algebra $A$ the translational hull of many more subsemigroups of $\en(A)$ will be realised by mappings.
\begin{theorem}\label{thm:free} Let $A$ be  a free algebra and let $I$ be any subsemigroup of $\en(A)$ such that
$I$ contains $I_2$. Then
\[\Omega(I)=\{ (\lambda_f,\rho_f): f\in T(A,I)\}.\]\end{theorem}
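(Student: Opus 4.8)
The plan is to reduce everything to Theorem~\ref{thm:maps}: it suffices to show that every $\lambda\in\Lambdat(I)$ is left-balanced and every $\rho\in\Rhot(I)$ is right-balanced, after which the stated description of $\Omega(I)$ follows at once. The left-balanced half is immediate. Since $2\geq 2$, the free algebra $A$ is $I_2$-representable, and representability is monotone in the subsemigroup (from $I_2\subseteq I$ we get $\im I_2\subseteq\im I$, whence $A=\langle\im I_2\rangle\subseteq\langle\im I\rangle$); thus $A$ is $I$-representable. Lemma~\ref{lem:lbalanced} then guarantees that every left translation linked to some right translation, and in particular every $\lambda\in\Lambdat(I)$, is left-balanced.

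The substance of the proof, and the main obstacle, is the right-balanced condition. I would model the argument on the proof of Proposition~\ref{prop:suffmaps}(1), but that proof uses the hypothesis $I=I^2$ in an essential way: it factorises $\alpha=\gamma_1\gamma_2$ precisely so that $a\gamma_1$ lands in $\im I$, where the assumed constant-image morphisms $\gamma_c\in I$ become available. Here $I$ need not satisfy $I=I^2$, so this factorisation is unavailable and must be bypassed.

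The key point that lets me bypass it is that, because $A$ is free on $X$ and $I_2\subseteq I$, for every $a\in A$ the endomorphism $\mu_a$ determined by $x\mu_a=a$ for all $x\in X$ has image $\langle a\rangle$, and so lies in $I_2\subseteq I$. These constant-on-generators morphisms can be applied directly, with no need to decompose $\alpha$. Concretely, given $\rho\in\Rho(I)$ and a relation $a\alpha=b\beta$ with $a,b\in A$ and $\alpha,\beta\in I$, I would form $\mu_a,\mu_b\in I$ as above. The morphisms $\mu_a\alpha$ and $\mu_b\beta$ send every generator $x$ to $a\alpha=b\beta$, hence are equal in $\en(A)$; the right-translation identity $(\mu_a\alpha)\rho=\mu_a(\alpha\rho)$ then yields $\mu_a(\alpha\rho)=\mu_b(\beta\rho)$ in $I$. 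Evaluating this equality of morphisms at any generator recovers $a(\alpha\rho)=b(\beta\rho)$, which is exactly right-balancedness (and in fact this holds for every $\rho\in\Rho(I)$, not merely those in $\Rhot(I)$). The only degenerate case is $X=\emptyset$, where $A=C$, the monoid $\en(A)$ is trivial and the statement is vacuous. With both balancedness conditions established, Theorem~\ref{thm:maps} gives $\Omega(I)=\{(\lambda_f,\rho_f):f\in T(A,I)\}$, as required.
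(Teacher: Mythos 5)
Your proposal is correct and follows essentially the same route as the paper: representability via the constant-on-generators morphisms $\gamma_c\in I_2\subseteq I$ gives left-balancedness through Lemma~\ref{lem:lbalanced}, and your computation $\mu_a\alpha=\mu_b\beta \Rightarrow \mu_a(\alpha\rho)=\mu_b(\beta\rho) \Rightarrow a(\alpha\rho)=b(\beta\rho)$ is precisely the paper's argument (with $\mu_a$ playing the role of its $\gamma_a$), after which Theorem~\ref{thm:maps} finishes the proof. Your explicit observation that this avoids the $I=I^2$ factorisation of Proposition~\ref{prop:suffmaps} matches exactly how the paper's proof proceeds.
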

\begin{proof} Let $X$ be a set of free generators for $A$.  First note that, as in the previous proof, $A$ is $I$-representable since if  $c\in A$ we may define $\gamma_c$ by $x\gamma_c=c$ for all $x\in X$, and clearly
$\gamma_c\in I_2 \subseteq I$. Thus by Lemma \ref{lem:lbalanced} we have that every linked left translation is left-balanced.

We show that any $\rho\in \Rho(I)$ is right-balanced. Let $a,b\in A$ and $\alpha,\beta\in I$ with $a\alpha=b\beta$. 
Clearly $\gamma_a\alpha=\gamma_b\beta$. 
It follows that \[\gamma_a(\alpha\rho)=(\gamma_a\alpha)\rho=(\gamma_b\beta)\rho=\gamma_b(\beta\rho).\]
Thus 
\[a(\alpha\rho)=x\gamma_a(\alpha\rho)=x\gamma_b(\beta\rho)=b(\beta\rho)\]
where $x$ is taken to be any element of $X$, hence showing that $\rho$ is right-balanced. The result then follows from Theorem \ref{thm:maps}.
\end{proof}

We can say a little more about the semigroups $T(A,I_k)$. \begin{proposition}\label{prop:kmorph} Let $A$ be a free algebra. Then $f\in T(A,I_k)$ if and only if
	\begin{enumerate}\item 
		$f|_{\lowerme{\langle Y\rangle}}:\langle Y\rangle\rightarrow A$ is a morphism for all $Y \subseteq  A$ with $|Y| < k$;
		and
		\item for all $t(x_1,\ldots, x_n)\in A$ we have $t(x_1,\ldots, x_n)f\,\sim \, t(x_1f,\ldots, x_nf)$.
	\end{enumerate}
\end{proposition}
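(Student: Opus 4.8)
The plan is to unwind the definition of the idealiser and match each of its two defining conditions to one of (1) and (2). Recall that $f\in T(A,I_k)$ precisely when $\alpha f\in I_k$ and $f\alpha\in I_k$ for every $\alpha\in I_k$; I would treat these two requirements separately, showing that the first is equivalent to (1) and the second to (2). Before doing so I would record the simplification of the congruence $\sim$ of Definition~\ref{defn:sim} in this setting: since $I_k=I_k^2$ by Lemma~\ref{lem:Ik}, we have $I_k^m=I_k$ for all $m\geq 1$, so $\sim$ collapses to $\ker I_k$; that is, $a\sim b$ if and only if $a\gamma=b\gamma$ for all $\gamma\in I_k$. This identification is exactly what lets condition (2) be read as a statement about morphisms.

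For the correspondence between $f\alpha\in I_k$ and (2), note first that $\im(f\alpha)\subseteq\im\alpha$ already lies in a subalgebra generated by fewer than $k$ elements, so the image requirement for membership in $I_k$ is automatic and $f\alpha\in I_k$ reduces to $f\alpha\in\en(A)$. By part~(5) of Lemma~\ref{lem: right_trans}, for a fixed $\alpha$ this holds exactly when $t(x_1f,\ldots,x_nf)\alpha=t(x_1,\ldots,x_n)f\alpha$ for all terms $t$. Quantifying over all $\alpha\in I_k$ and using $\sim=\ker I_k$, this is precisely the assertion $t(x_1,\ldots,x_n)f\sim t(x_1f,\ldots,x_nf)$, i.e.\ condition (2); both directions fall out of this single equivalence.

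For the correspondence between $\alpha f\in I_k$ and (1), I would use part~(2) of Lemma~\ref{lem: right_trans}: $\alpha f\in\en(A)$ if and only if $f|_{\im\alpha}$ is a morphism. Writing $\im\alpha\subseteq\langle Z\rangle$ with $|Z|<k$ (possible as $\alpha\in I_k$), whenever $f$ is a morphism on $\langle Z\rangle$ one has $(\im\alpha)f\subseteq\langle Zf\rangle$ with $|Zf|<k$, so $\alpha f\in I_k$. Assuming (1), each such $\langle Z\rangle$ carries $f$ as a morphism, whence $\alpha f\in I_k$. For the converse I would exploit freeness: given $Y\subseteq A$ with $|Y|<k$, choose a surjection $X\twoheadrightarrow Y$ and extend it (using that $A$ is free on $X$) to an endomorphism $\alpha$ with $\im\alpha=\langle Y\rangle\in I_k$; then $\alpha f\in I_k\subseteq\en(A)$ forces $f|_{\langle Y\rangle}$ to be a morphism, again by part~(2) of Lemma~\ref{lem: right_trans}.

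The one point requiring genuine care — the main obstacle — is this last construction of $\alpha$ realising $\langle Y\rangle$ as an image, which needs a surjection $X\twoheadrightarrow Y$ and hence $|X|\geq|Y|$; this is fine in the principal range $k\leq|X|$, where $|Y|<k\leq|X|$, and the remaining case $k>|X|$ is degenerate since then $I_k=\en(A)$, so $T(A,I_k)=\en(A)$ and both (1) and (2) reduce to $f\in\en(A)$. Everything else is bookkeeping: tracking images of subalgebras under morphisms and assembling the two halves to conclude that $f\in T(A,I_k)$ holds if and only if both (1) and (2) hold.
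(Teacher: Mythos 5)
Your argument is correct and follows essentially the same route as the paper's: both reduce membership in the idealiser to the restriction-is-a-morphism and term conditions of Lemma~\ref{lem: right_trans} (you use parts (2) and (5) pointwise where the paper packages the same content via parts (3) and (6) together with $I_k=I_k^2$), identify $\sim$ with $\ker I_k$, and realise each $\langle Y\rangle$ with $|Y|<k$ as $\im\alpha$ for some $\alpha\in I_k$. Your explicit treatment of the degenerate case $k>|X|$ and of the image bound for $\alpha f$ is just bookkeeping the paper leaves implicit.
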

\begin{proof} Since $I_k=I_k^2$, we may apply parts (3) and (6) of Lemma \ref{lem: right_trans} to find $f \in T(A, I_k)$ if and only if for all $\alpha \in I_k$ we have: $f|_{\im\alpha}$ is a morphism and $t(x_1f, \ldots, x_nf) \alpha = t(x_1, \ldots, x_n) f\alpha$ for all $t(x_1, \ldots, x_n) \alpha \in A$. Noting that for every $Y \subseteq  A$ with $|Y| < k$ there exists $\alpha\in I_k$ with $\im\alpha=\langle Y\rangle$, the result follows.  
\end{proof} 

\begin{example}
\label{ex:nonneg}
Consider the free monogenic semigroup $A = (\mathbb{N}, +)$. The endomorphisms of $A$ are the maps $\varphi_a: \mathbb{N} \rightarrow \mathbb{N}$ defined by $n \varphi_a = na$ for all $n \in \mathbb{N}$.  It follows that for all $a,b\in \mathbb{N}$ we have $\varphi_a\varphi_b=\varphi_{ab}$. In this case $I_1 = \emptyset$ and $I_2 = \en(A)$, and so the results of the current section do not give much of the overall flavour. 

However, it is easy to see that $\en(A)$ is isomorphic to $(\mathbb{N},\times)$ and so commutative and cancellative. Let $I = \{\varphi_{2k}: k \in \mathbb{N}\}$. The preceding comments give that   $I$ is an ideal of $\en(A)$ so that $S=S(A,I)=\en(A)$. Further, $I$ is  left and right $\en(A)$-reductive. Moreover, since every element of $\en(A)$ is injective, it is immediate that $A$ is $I$-separable. However, $A$ is not $I$-representable, since the submonoid of $A$ generated by the images of elements in $I$ is $2 \mathbb{N} \neq \mathbb{N}$. Thus in particular, the converse to part (1) of Lemma \ref{lem:reductivity from rep or sep} does not hold.
	
Now suppose that $\rho$ is a right translation of $I$ with $\varphi_{2} \rho = \varphi_{2k}$ for some $k \in \mathbb{N}$. Since multiplication is commutative, for all $m \in \mathbb{N}$ we see that:
	$$\varphi_{4mk} = \varphi_{2m}\varphi_{2k} = \varphi_{2m}(\varphi_{2})\rho =(\varphi_{4m})\rho = \varphi_{2}(\varphi_{2m})\rho \Rightarrow (\varphi_{2m})\rho = \varphi_{2mk}.$$
	That is, $\rho = \rho_{\varphi_{k}}$. This demonstrates that $\Rho(I) = \Rho_S(I)$, and so $\RhoS(I)=\Rhot(I) = \Rho(I)$. It is then easy to see that in this case $\chiR$ is an isomorphism from $\en(A)$ to $\Rhot(I)=\Rho(I)$.
	
	A similar calculation reveals that every left translation $\lambda$ of $I$ is of the form $\varphi_h$ and $\chiL$ is an isomorphism from $\en(A)$ to $\Lambdat(I)=\Lambda(I)$. Finally, cancellativity in $\en(A)$ gives that if $(\lambda_{\varphi_h},\rho_{\varphi_k})\in \Omega(I)$, then $h=k$.  Thus in this case, $\en(A)$ is naturally isomorphic to $\Omega(I)$.
\end{example}

\begin{example}

Let $A$ be the free monoid on two elements $\{ a,b\}$. The elements of $A$ are
words (that is, finite strings) of elements of $\{ a,b\}$; the operation is concatenation and the 
identity is the empty word $\varepsilon$. We consider the ideal 
\[I_2=\{ \alpha\in\en(A): \im\alpha\subseteq \langle u\rangle, u\in A\},\]
where for $u\in A$ we  have
$\langle u\rangle=\{ u^k:k\in\mathbb{N}_0\}$
is the submonoid of $A$ generated by $u$.
Given that $A$ is free, any morphism $\alpha$ is determined by the values it takes on $a$ and $b$;
if $\alpha\in I_2$ it takes $a$ and $b$ to powers of the same word. If $a\alpha=u^m$ and $b\alpha=u^n$   we write
$\alpha=u(m,n)$. 
We may assume in the above that $u$ is chosen to be {\em primitive}, that is,
$u\neq v^k$ for any word $v\neq u$. 
For $x\in \{ a,b\}$ and $u\in A$ we  denote by $|u|_x$ the number of occurrences of letter $x$ in $u$. 
Notice that for all $v,w\in A$ and $u(m,n)\in I_2$ we have 
\[\begin{array}{rcll}
v\sim w&\Leftrightarrow &v\sim_1 w&\\
&\Leftrightarrow & vu(m,n)=wu(m,n)&\mbox{ for all }u(m,n)\in I_2\\
&\Leftrightarrow & |v|_am+|v|_bn=|w|_am+|w|_bn&\mbox{ for all }m,n\in \mathbb{N}_0\\
&\Leftrightarrow &|v|_a=|w|_a\mbox{ and }|v|_b=|w|_b.
\end{array}
\]
It follows that $A$ is not $I_2$-separable.
 
 It is easy to see that
$f: A \rightarrow A$ is in the right idealiser of $I_2$ in $\fullTA$ if and only if for all words 
$u\in A$ and $k\in \mathbb{N}_0$ we have $(u^k)f=(uf)^k$.
On the other hand $f: A\rightarrow  A$ is in the left  idealiser of $I_2$ in $\fullTA$ if and only
if for all $u\in A$ we have
\[|uf|_a= |u|_a|af|_a + |u|_b|bf|_a\mbox{ and }
|uf|_b= |u|_a|af|_b + |u|_b|bf|_b,  \]
that is, $f$ is linearly increasing the number of letters $a$ and $b$ in the word, but does not
recognise the order in which they appear.  In the notation of Section~\ref{sec:nosep}, if $f$ is in the idealiser $T(A,I_2)$, we have
$\rho_f\mapsto \overline{\rho_f}=\rho_{f'}\in \Rho(I/{\approx})$ where $f'=f_{\rho_f}$. A lift of
$f'$ is a map $f'':A\rightarrow A$ such that for all $w\in A$ we have
$[wf'']=[w]f'=[wf]$. Since $f\alpha\in I_2$ for all $\alpha\in I_2$,  $I_2=I^2_2$ and $\sim$ is a congruence, we obtain that for every $a_1\dots a_n\in A$, where $a_i\in \{ a,b\}$, 
$1\leq i\leq n$, we have
$a_1f''\dots a_nf''\sim (a_1\dots a_n)f''$. This is equivalent to $f''$ being in the left  idealiser of $I_2$ in $\fullTA$.

A concrete example of $f$ in $T(A,I_2)$ that is {\em not} in $\en(A)$,
is given by the map $(a_1\dots a_n)f= a_n\dots a_1$.
\end{example}

\subsection{Independence algebras}\label{sub:ind}
Let $A$ be an independence algebra, that is, a free algebra on a set $X$ which also satisfies the exchange property \cite{Gould-IndepAlg}.
In particular, this means that any maximal independent set $X$ (that is, a set $X$ such that
for all $x\in X$ we have $x\notin\langle X\setminus\{ x\}\rangle$) is a minimal generating set for $A$, called a {\em basis} of $A$, and that any independent set can be extended to a basis of $A$.
Moreover, every subalgebra admits a basis, and all bases of a subalgebra $B\subseteq A$ have the same cardinality, called the {\em dimension} of $B$.
Hence, for each map $\alpha\in\EndA$, we denote by $\rk\alpha$ the \emph{rank of $\alpha$}, corresponding to the dimension of the subalgebra $\im\alpha$.
Independence algebras were formerly called $v^*$-algebras \cite{narkiewicz} and were utilised by the first author in~\cite{Gould-IndepAlg}. Notable examples
include sets, free group acts and vector spaces. 

The proper ideals of the endomorphism monoid $\EndA$ are well-known and form a chain, corresponding exactly to the sets $I_k$ of endomorphisms with rank less than the cardinal $1\leq k\leq |X|$, that is, $I_k := \set{\alpha\in\EndA : \rk\alpha < k }$.
In particular, these ideals are the same as those considered in the previous subsection and are now the {\em only} ideals of $\en(A)$.
In the case where $A$ is a vector space, the ideal $I_2$ is completely 0-simple. Translational hulls of completely 0-simple semigroups $S$ have been well studied \cite{Petrich:1968,Petrich:1970}, including in the case where $S=I_2$ for $A$ a vector space \cite{Gluskin:1959a,Petrich:1970} and in this case it is known that the translational hull is isomorphic to $\en(A)$. Returning to the general case, it is clear from definition that $A$ is $I_k$-representable for all $k \geq 2$; we give the following characterisations of $I_k$-separability in this case.

\begin{theorem}\label{cor:equivalences indep alg}
Let $A$ be an independence algebra  with basis $X$ and let $|X| \geq k\geq 2$. Then the following are equivalent:
	\begin{enumerate}
		\item $A$ is $I_k$-separable;
		\item $\Omega(I_k)\isom\Rhot(I_k) \isom \Lambdat(I_k) \isom \EndA$;
		\item $I_k$ is a reductive ideal;
		\item $k>2$, or $k=2$ and every $1$-dimensional subalgebra contains at least two elements.
	\end{enumerate}
\end{theorem}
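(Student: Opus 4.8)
The plan is to treat (1) as the hub and prove (1)$\Leftrightarrow$(3), (1)$\Leftrightarrow$(4) and (1)$\Leftrightarrow$(2) separately, so that all four statements collapse together. Two of these follow almost immediately from the machinery already in place. Since an independence algebra is a free algebra, and since $A$ is $I_k$-representable for every $k\geq 2$, Lemma~\ref{lem:SEP in free alg} gives that (1) is equivalent to $I_k$ being right reductive, while part~(5) of Theorem~\ref{lem:reductivity from rep or sep} shows $I_k$ is automatically left $\en(A)$-reductive, hence left reductive. As left reductivity comes for free, ``$I_k$ reductive'' amounts to ``$I_k$ right reductive'', so (1)$\Leftrightarrow$(3). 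For (1)$\Rightarrow$(2) I would simply invoke Theorem~\ref{thm:main}: $I_k$-representability together with $I_k$-separability yields the natural isomorphisms $\Omega(I_k)\cong\Rhot(I_k)\cong\Lambdat(I_k)\cong S(A,I_k)$, and since $I_k$ is an ideal of $\en(A)$ we have $S(A,I_k)=\en(A)$.

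The real content lies in (1)$\Leftrightarrow$(4), and here I would first record two structural facts. First, any two $1$-dimensional subalgebras have the same cardinality $d$: if $z,w$ are independent, the endomorphisms extending $z\mapsto w$ and $w\mapsto z$ restrict to morphisms $\langle z\rangle\to\langle w\rangle$ and $\langle w\rangle\to\langle z\rangle$ whose composite fixes the generator $z$ and is therefore the identity on $\langle z\rangle$; thus these restrictions are mutually inverse bijections, and ``some $1$-dimensional subalgebra is a singleton'' is the same as ``$d=1$''. Second, every subalgebra contains $C=\langle\emptyset\rangle$, and a $1$-dimensional subalgebra $\langle z\rangle$ has $z\notin C$, so $\langle z\rangle\supsetneq C$; hence $d=1$ forces $C=\emptyset$.

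For (4)$\Rightarrow$(1) I would separate an arbitrary pair $a\neq b$ by a map of rank $<k$. Put $B=\langle a,b\rangle$, so $\dim B\leq 2$. If $k>2$, a retraction of $A$ onto $B$ (fix a basis of $B$, extend it to a basis of $A$, and send the remaining basis elements into $B$) has $\rk$ equal to $\dim B\leq 2<k$ and fixes $a,b$, so it lies in $I_k$ and separates them. If $k=2$ and $d\geq 2$, the cases $\dim B\leq 1$ are handled by a rank-$\leq 1$ retraction (elements of $C$ being fixed by every endomorphism), while if $\dim B=2$ the exchange property forces $\{a,b\}$ to be a basis of $B$; extending it to a basis of $A$ and sending $a,b$ to two distinct elements $z_1\neq z_2$ of a fixed $1$-dimensional subalgebra (available since $d\geq 2$) gives a rank-$1$ endomorphism separating $a$ and $b$. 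For (1)$\Rightarrow$(4) I would argue contrapositively: if (4) fails then $k=2$ and $d=1$, whence $C=\emptyset$ and every rank-$\leq 1$ endomorphism is a constant map, so $I_2$ consists precisely of the constant maps; these separate no two points, so $A$ is not $I_2$-separable.

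Finally, (2)$\Rightarrow$(1) I would also prove contrapositively using the description just obtained. When (1) fails we are in the situation $C=\emptyset$, $k=2$, with $I_2$ the semigroup of all constant maps; this is a right zero semigroup, so by Example~\ref{lright zero} its only left translation is the identity and $\Lambdat(I_2)$ is trivial. As $|A|\geq|X|\geq 2$ gives $|\en(A)|\geq 2$, the chain in (2) cannot hold, and (2) fails; this closes the loop. I expect the main obstacle to be the $\dim B=2$ subcase of (4)$\Rightarrow$(1): the crucial points are that the exchange property makes $\{a,b\}$ independent, so their images may be prescribed freely, and that $d\geq 2$ supplies a $1$-dimensional target with room to keep those images distinct. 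The supporting observation that $d=1$ forces $C=\emptyset$ is what makes the converse direction, and hence the failure of (2), clean.
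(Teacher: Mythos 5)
Your proposal is correct, and all four implications go through. The overall architecture differs from the paper's: the paper closes a cycle $(1)\Rightarrow(2)\Rightarrow(3)\Rightarrow(1)$ (via Theorem~\ref{thm:main}, Theorem~\ref{lem:reductivity from rep or sep} and Lemma~\ref{lem:SEP in free alg} respectively) and then proves $(1)\Leftrightarrow(4)$ separately, whereas you use $(1)$ as a hub. The shared ingredients are the same — Lemma~\ref{lem:SEP in free alg} for separability versus right reductivity, part (5) of Theorem~\ref{lem:reductivity from rep or sep} for the free left reductivity, and Theorem~\ref{thm:main} for $(1)\Rightarrow(2)$ — but your $(2)\Rightarrow(1)$ is genuinely different: you argue contrapositively that when $(1)$ fails one has $k=2$, $C=\emptyset$ and $I_2$ equal to the right-zero semigroup of constant maps, so by Example~\ref{lright zero} the semigroup $\Lambdat(I_2)$ is trivial while $\lvert\en(A)\rvert\geq 2$. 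This is an observation the paper only exploits in the theorem that follows, and it buys you something concrete: the paper's step $(2)\Rightarrow(3)$ leans on Theorem~\ref{lem:reductivity from rep or sep}, whose conclusions are phrased in terms of \emph{natural} isomorphisms, while statement $(2)$ only asserts abstract isomorphisms, so your cardinality argument sidesteps a point one might otherwise have to justify. For $(4)\Rightarrow(1)$ your case split by $\dim\langle a,b\rangle$ with retractions is a repackaging of the paper's split into $a,b\in C$, $b=t(a)$, and $\set{a,b}$ independent — the separating endomorphisms constructed are essentially identical — and your proof that all $1$-dimensional subalgebras are equipotent supplies a detail the paper simply cites as standard. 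The only places where you are slightly terser than ideal are the implicit facts that distinct $1$-dimensional subalgebras have independent generators (needed for the equipotence argument) and that a two-element generating set of a $2$-dimensional subalgebra is automatically independent; both are immediate from the exchange property, so nothing is at stake.
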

\begin{proof}
	Recall that $A$ is always $I_k$-representable when $k\geq 2$.
	Under this observation, we have that $(1)\Rightarrow (2)$ by Corollary~\ref{thm:main} and $(2)\Rightarrow (3)$ by Theorem \ref{lem:reductivity from rep or sep}.
	The fact that $(3)\Rightarrow (1)$ comes directly from Lemma~\ref{lem:SEP in free alg} since a reductive ideal is right reductive.
	We show the  equivalence of $(1)$ and $(4)$.
Suppose first that $A$ is $I_k$-separable. If $k>2$, there is nothing to show, so suppose that $k = 2$.  Let $a\neq b\in A$. 
Since $A$ is $I_2$-separable, there exists $\gamma\in I_2$, such that $a\gamma \neq b\gamma$, and hence $\im\gamma$ is a subalgebra of $A$ containing at least two elements. Since $\gamma \in I_2$, by definition we have that $\im\gamma$ is contained in a $1$-dimensional subalgebra of $A$. Because all $1$-dimensional subalgebras are isomorphic in an independence algebra, it follows that every one-dimensional subalgebra contains at least two elements, as required.
	
To conclude the proof we need to show that if $k>2$ or $k=2$ and every $1$-dimensional subalgebra of $A$ contains at least two elements, then we must have that $A$ is $I_k$-separable. To this end, let $a\neq b\in A$ and as above let $C=\langle \emptyset\rangle$.

\begin{itemize}
	\item If $a,b\in C$, then $a\gamma = a \neq b= b\gamma$ for all $\gamma\in I_k$, so assume from now on that $a\notin C$ so that $\{a\}$ is an independent set. There are then two possibilities: either $b=t(a)$ for some term $t$, or $\set{a,b}$ is independent.	
	\item If $b=t(a)$, we extend the independent set $\{a\}$ to a basis $Y$ of $A$. Taking $\gamma$ to be the unique endomorphism mapping all basis elements in $Y$ to $a$, we have $\im\gamma \subseteq \clotX[a]$, so that $\gamma\in I_2 \subseteq I_k$ and $b\gamma = t(a)\gamma = t(a\gamma) = t(a) = b\neq a = a\gamma$. 
	\item Suppose then that $\set{a,b}$ is independent and extend this to a basis $Y$ of $A$. If $k\geq 3$, taking $\gamma$ to be the unique endomorphism mapping $b$ to $b$ and all remaining basis elements in $Y$ to $a$, we have $\im\gamma \subseteq \clotX[a,b]$, so that $\gamma\in I_3 \subseteq I_k$ and $a\gamma = a \neq b = b\gamma$. On the other hand, if $k=2$ and all $1$-dimensional subalgebras of $A$ are not singletons, it follows that there exists $s \in \langle a \rangle$ with $s \neq a$, and in this case taking $\gamma$ to be the unique endomorphism mapping $b$ to $s$ and all remaining basis elements in $Y$ to $a$ we have $\im\gamma \subseteq \clotX[a]$, so that $\gamma\in I_2 \subseteq I_k$ and $a\gamma = a \neq s = b\gamma$.
\end{itemize}

Hence, in all cases, there exists a map $\gamma\in I_k$ such that $a\gamma \neq b\gamma$, which shows that $A$ is $I_k$-separable.
\end{proof}

When $k = 1$ the situation is slightly more complicated. Note that $I_1$ exists (recalling that our ideals are taken to be non-empty) if and only if  $C=\langle\emptyset\rangle\neq\emptyset$, in which case $I_1$ is easily seen to be the set of endomorphisms with image equal to $C$. Thus, as we remarked earlier, $A$ is $I_1$-representable if and only if $A=C \neq \emptyset$. The following lemma characterises when $A$ is $I_1$-separable, and demonstrates (as we shall see in Remark \ref{rem:indnorepsep} below) that we need not have equivalence between statements $(1)$ and $(3)$ of Theorem~\ref{cor:equivalences indep alg} in the case $k=1$.

\begin{lemma}\label{lem:SEP for I1 in indep alg}
	Let $A$ be an independence algebra,  $C=\langle\emptyset\rangle$ and suppose that $C \neq \emptyset$ so that $I_1$ is an ideal of $\en(A)$. Then $A$ is $I_1$-separable if and only if $|C|\geq 2$ and for all unary terms $t$, if $t(c) = c$ for all $c \in C$ then $t(a) = a$ for all $a \in A$.
\end{lemma}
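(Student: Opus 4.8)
The plan is to exploit the very rigid structure of $I_1$. Recall from the discussion preceding the lemma that $I_1$ is exactly the set of endomorphisms $\alpha$ with $\im\alpha=C$; since every endomorphism of $A$ fixes $C$ pointwise (it preserves all nullary operations, hence fixes every element they generate), such an $\alpha$ is precisely an endomorphism carrying a basis into $C$, and conversely any map from a basis into $C$ extends uniquely to such an $\alpha$. Two consequences will be used repeatedly: for $\alpha\in I_1$ and any $a\in A$ we have $a\alpha\in C$; and for any unary term $t$ we have $t(a)\alpha=t(a\alpha)$ since $\alpha$ is a morphism. First I would record these observations, together with the fact that $I_1\neq\emptyset$ as $C\neq\emptyset$.

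For the forward implication, assume $A$ is $I_1$-separable (and that $A$ has at least two elements, the only non-degenerate case). Choosing any $a\neq b$ and an $\alpha\in I_1$ separating them gives $a\alpha\neq b\alpha$ with both elements lying in $C$, whence $|C|\geq 2$. For the term condition, suppose $t$ is a unary term with $t(c)=c$ for all $c\in C$ but $t(a)\neq a$ for some $a\in A$. Then $a$ and $t(a)$ are distinct, so separability yields $\alpha\in I_1$ with $a\alpha\neq t(a)\alpha$; but $t(a)\alpha=t(a\alpha)=a\alpha$, since $a\alpha\in C$ is fixed by $t$, a contradiction. Hence $t$ fixes all of $A$.

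For the converse, assume $|C|\geq 2$ and the term condition, take $a\neq b$ in $A$, and produce $\alpha\in I_1$ with $a\alpha\neq b\alpha$ by a case analysis mirroring the proof of Theorem~\ref{cor:equivalences indep alg}. If $a,b\in C$, then every $\alpha\in I_1$ fixes both and already separates them. If exactly one, say $b$, lies in $C$, extend the independent set $\{a\}$ to a basis and define $\alpha\in I_1$ sending $a$ to some $c\in C\setminus\{b\}$ (available since $|C|\geq 2$) and the remaining basis elements into $C$; then $a\alpha=c\neq b=b\alpha$. If neither lies in $C$ and $\{a,b\}$ is independent, extend it to a basis and send $a,b$ to distinct constants and the rest into $C$. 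The remaining case is $a,b\notin C$ with $\{a,b\}$ dependent; then, up to swapping $a$ and $b$, we have $b\in\langle a\rangle$, so $b=t(a)$ for a unary term $t$, and $t(a)=b\neq a$ shows that $t$ does not fix $A$ pointwise. The contrapositive of the term condition then supplies $c\in C$ with $t(c)\neq c$; extending $\{a\}$ to a basis and sending $a\mapsto c$ (and the rest into $C$) gives $a\alpha=c\neq t(c)=t(a)\alpha=b\alpha$.

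The routine parts are the verifications that each constructed map lies in $I_1$ (its image is a subalgebra of $C$ containing $C$, hence equal to $C$) and the bookkeeping of the case split. The one genuinely load-bearing step is the dependent case: I expect the main obstacle to be cleanly extracting, from $\{a,b\}$ dependent with $a,b\notin C$, a representation $b=t(a)$ with a \emph{unary} term $t$ to which the hypothesis applies, and then matching it against the contrapositive of the term condition. This is precisely where both hypotheses are genuinely used --- $|C|\geq2$ in the three separating constructions and the term condition in the dependent case.
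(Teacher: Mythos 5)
Your proof is correct and follows essentially the same route as the paper's: the forward direction separates $a$ from $t(a)$ using that $a\alpha\in C$ is fixed by $t$, and the converse runs the same case analysis ($a,b\in C$; one in $C$; $\{a,b\}$ independent; $b=t(a)$), with your extra "dependent" bookkeeping via the exchange property being only a cosmetic repackaging of the paper's trichotomy. The degenerate case $|A|=1$ is glossed over identically in both arguments, so no further comment is needed.
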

\begin{proof}
	Notice first that $\im\gamma = C$ for all $\gamma\in I_1$.
	
	If $A$ is $I_1$-separable then for $a\neq b\in A$, there exists $\gamma\in I_1$ such that $a\gamma \neq b\gamma$.
	Thus $\Card{C}\geq 2$. For the remaining part of this implication we prove the contrapositive. If $t$ is a unary term such that $t(a) \neq a$ for some $a \in A$, then for all independent elements $a\in A$, we have that $t(a)\neq a$.
	Then, by $I_1$-separability, there exists $\gamma\in I_1$ such that $a\gamma\neq t(a)\gamma = t(a\gamma)$, that is, there exists $c\in C$ such that $c\neq t(c)$.
	
	Conversely, suppose that $\Card{C}\geq 2$ and that for each unary $t$ such that there exists $a \in A$ with $t(a) \neq a$, we have an element $c_t\in C$ such that $c_t \neq t(c_t)$.
	Let $a\neq b\in A$.
	If $a,b\in C$, then clearly $a\gamma\neq b\gamma$ for all $\gamma\in I_1$, so suppose without loss of generality that $a\notin C$, so that $\set{a}$ is independent. We then have one of the following situations:
	\begin{itemize}
		\item $b\in C$; in this case since $\Card{C}\geq 2$ we may choose $d \in C$ with $d \neq b$; since $\lbrace a \rbrace$ is an independent set there is a morphism $\gamma \in I_1$ mapping $a$ to $d$, and hence $b \gamma = b \neq d = a\gamma$.
		\item $b = t(a)$ for some unary term $t$; in this case because $\lbrace a \rbrace$ is an independent set there is a morphism $\gamma \in I_1$
		mapping $a$ to $c_t$; and $b = t(a)$ to $t(a)\gamma= t(a \gamma)= t(c_t) \neq c_t$;
		\item $\set{a,b}$ is independent; in this case we may choose any $c, d \in C$ with $c \neq d$ and there will be a $\gamma \in I_1$ mapping $a$ to $c$ and $b$ to $d$.
	\end{itemize}
	In all cases, we get that $\gamma\in I_1$ with $a\gamma\neq b\gamma$ and thus $A$ is $I_1$-separable.
\end{proof}

\begin{remark}\label{rem:indnorepsep}
Suppose that $C \neq \emptyset$, so that $I_1  = \{\alpha \in \en(A): \im \alpha = C\} \neq \emptyset$. Since the elements of $C$ are fixed points of every endomorphism of $A$, we have that $I_1$ is a left-zero semigroup ($\gamma\alpha = \gamma\beta = \gamma$ for all $\alpha,\beta,\gamma\in I_1$). Thus $I_1$ is in particular always right reductive, but $A$ is $I_1$-separable only under the conditions given in Lemma~\ref{lem:SEP for I1 in indep alg}. (For example, if $A$ is a vector space then $I_1$ is the ideal consisting of the zero map, in which case $A$ is obviously not $I_1$-separable.) To see that the condition that $|C|\geq 2$ is not sufficient for the converse statement in Lemma~\ref{lem:SEP for I1 in indep alg} to hold, consider a non-trivial group $G$ as a free left $G$-act, and let
$\{ p,q\}$ be a 2-element $G$-act where the action of $G$ is trivial. Then $G\cup\{ p,q\}$, equipped with the unary $G$-act operations $\{ t_g:g\in G\}$ and nullary operations
$\nu_p$ and $\nu_q$ with images $p$ and $q$ respectively,  is an independence algebra with $|C|=2$. If $g$ is a non-identity element of $G$,
then $t_g(h)=gh\neq h$ for any $h\in G$, but $t_g(c)=c$ for $c\in C=\{ p,q\}$.  

Similarly, if $C \neq A$, then $A$ is not $I_1$-representable, and $I_1$ is left reductive if and only if $C$ is a singleton.
Indeed, since $I_1$ is a left-zero semigroup, we either have that $I_1$ is not left reductive, or $\alpha = \beta$ for all $\alpha,\beta\in I_1$, that is, $I_1$ is a singleton. In the latter case, since $C\neq A$, there exists an independent element $a\in A$, and thus for all $c\in C$, there exists a map $\gamma\in I_1$ which sends $a$ to $c$. The fact that $I_1$ is a singleton forces $C$ to be a singleton, as required.
\end{remark}
Theorem~\ref{cor:equivalences indep alg} gives that in many cases the translational hull of an ideal of an independence algebra $A$ is naturally isomorphic to the endomorphism  monoid $\en(A)$. The following result completes the description of the translational hull of an ideal of an independence algebra.

\begin{theorem}
Let $A$ be an independence algebra  and let $I$ be a (non-empty) ideal of $A$. Then the translational hull $\Omega(I)$ is isomorphic to $\en(A)$ or  $\fullTA[I]$. If $\Omega(I)$ is isomorphic to $\fullTA[I]$, then $I$ is the minimal ideal.
\end{theorem}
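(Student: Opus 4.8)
The plan is to lean on the classification of ideals recalled in Subsection~\ref{sub:ind}: every ideal of $\en(A)$ is either $\en(A)$ itself or one of the $I_k$ for a cardinal $1 \le k \le |X|$, and these form a chain. For $I = \en(A)$ we get $\Omega(I) \cong \en(A)$ at once from Example~\ref{full}, so I would concentrate on the proper ideals $I_k$ and argue by cases according to $k$ and to $I_k$-separability.

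First I would dispose of the separable cases. By Theorem~\ref{cor:equivalences indep alg}, whenever $k \ge 2$ and $A$ is $I_k$-separable we already have $\Omega(I_k) \cong \en(A)$. Moreover part~(4) of that theorem pins down exactly when separability fails for $k \ge 2$: precisely when $k = 2$ and every one-dimensional subalgebra of $A$ is a singleton (all such subalgebras being isomorphic). Thus the only non-separable proper ideals remaining are $I_2$ in this singleton case, and $I_1$, which exists exactly when $C \ne \emptyset$; these are the two situations that will produce the $\fullTA[I]$ alternative.

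For the case $k = 1$ I would observe, following Remark~\ref{rem:indnorepsep}, that $I_1$ is the set of endomorphisms with image $C$, and that, since the elements of $C$ are fixed by every endomorphism, $I_1$ is a left-zero semigroup. Example~\ref{lright zero} then gives $\Omega(I_1) \cong \fullTA[I_1]$, and $I_1$ is by definition the minimal ideal. For the remaining case, $I_2$ with every one-dimensional subalgebra a singleton, the key observations are: (i) since $k = 2 \le |X|$ forces $|X| \ge 2$, choosing two distinct basis elements $a \ne b$ and using $C \subseteq \langle a\rangle = \{a\}$ together with $C \subseteq \langle b\rangle = \{b\}$ yields $C = \emptyset$, so that $I_1$ does not exist and $I_2$ is the minimal ideal; and (ii) with $C = \emptyset$ every $\alpha \in I_2$ has rank exactly $1$ (rank $0$ would force $\im\alpha = \langle\emptyset\rangle = \emptyset$), so that $\im\alpha$ is a one-dimensional subalgebra, hence a singleton, whence $\alpha$ is constant. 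A one-line computation shows the constant maps form a right-zero semigroup, so $\Omega(I_2) \cong \fullTA[I_2]$ by Example~\ref{lright zero}.

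Assembling the cases shows that $\Omega(I)$ is always isomorphic to $\en(A)$ or to $\fullTA[I]$, and that the second alternative arises only for $I \in \{I_1, I_2\}$ and exactly when that ideal is minimal, which is the final assertion. The main obstacle is the non-separable analysis: one must verify that failure of $I_k$-separability collapses $I_k$ onto a semigroup of constant maps (for $k=2$) or $C$-valued maps (for $k=1$) carrying right- or left-zero multiplication, and --- to secure the minimality claim --- determine whether $C$ is empty, since this decides whether $I_1$ or $I_2$ sits at the bottom of the ideal chain.
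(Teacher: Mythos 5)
Your proposal is correct and follows essentially the same route as the paper's own proof: both run through the classification of the ideals as the chain $I_k$, dispose of the separable cases ($k>2$, or $k=2$ with non-singleton one-dimensional subalgebras) via Theorem~\ref{cor:equivalences indep alg}, and handle the two remaining cases by observing that $I_1$ is a left-zero semigroup and that $I_2$ (when every one-dimensional subalgebra is a singleton, forcing $C=\emptyset$) consists of constant maps and is a right-zero semigroup, so that Example~\ref{lright zero} applies and the ideal in question is minimal.
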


\begin{proof}
Recall that each proper ideal of $A$ is equal to $I_k$ for some $1 \leq k \leq |X|$ where $X$ is a basis for $A$. In the case where $k>2$ or $k=2$ and each $1$-dim subalgebra contains at least two elements, it follows immediately from Theorem~\ref{cor:equivalences indep alg} that the translational hull of $I_k$ is naturally isomorphic to the endomorphism  monoid $\en(A)$.

If $k=2$ and each $1$-dimensional subalgebra is a singleton, then $I_2$ is then the set of all endomorphisms whose image is a singleton set, which in turn forces the subalgebra $C$  to be empty (since $\Card{X}\geq k=2$ and for all $x\in X$, we have that $C \subseteq \clotX[x]$).
It is then  clear that $I_2 = \{\gamma_u: u \in A\}$ where for each $u \in A$, $\gamma_u$ denotes the constant map defined by $a\gamma_u = u$ for all $a \in A$ (which is an endomorphism by our assumption that every $1$-dimensional subalgebra is a singleton). In this case $I_2$ is the minimal ideal of $\en(A)$, and is easily seen to be a right zero semigroup, so Example~\ref{lright zero} applies to give that $\Omega(I_2)$ is isomorphic to $\fullTA[I_2]$.

Finally, if  $k=1$ then $I_1$ is non-empty if and only if $C$ is non-empty. Supposing then that $C$ is non-empty we then have that $I_1 = \{\gamma: \im\gamma = C\}$ is the minimal ideal, and it is a left zero semigroup, so by Example~\ref{lright zero} the translational hull of $I_1$ is isomorphic to $\fullTA[I_1]$.
\end{proof}

\begin{remark}
Notice that the two possibilities given in the previous result need not be mutually exclusive e.g.\ if the independence algebra $A$ is a set, then it is easy to see that the translational hull $\Omega(I_2)$ of the minimal ideal $I_2$ is isomorphic to \emph{both} $\en(A)=\fullTA[n]$ and $\fullTA[I_2]$.
Moreover, the converse of the second statement need not hold e.g.\ if the independence algebra $A$ is an affine algebra with distinguished non-zero subspace $A_0$, the minimal ideal is $I_2$ (since $C=\emptyset$), and (since the $1$-dimensional subalgebras are not singletons) the translational hull $\Omega(I_2)$ is isomorphic to $\en(A)$, which is \emph{not} isomorphic to $\fullTA[I_2]$ in general. 
\end{remark}
\subsection{Matrix monoids and endomorphisms of free modules over semirings}
A \textit{semiring} is an algebra defined on a set $R$ with two associative binary operations $+$ and $\times$ such that $\times$ distributes across $+$ from both sides. We say that $R$ is commutative if $\times$ is commutative, we say that $R$ has an identity if there is a constant $1$ of the algebra with the property that $1 \times r = r= r \times 1$ for all $r \in R$, and we say that $R$ has a zero if there is a constant $0$ of the algebra with the properties that $0 + r = r$ and $0 \times r = 0 = r \times 0$  for all $r \in R$. Fields and (commutative) rings are examples of (commutative) semirings; other well-studied examples include the \textit{tropical} and \textit{boolean}
semirings. The main applications of our next results are to $\en(A)$ where  where $A$ is a free semimodule over a semiring. However, we present a more general abstract approach, motivated by classical ring and semiring theory.

\begin{proposition}
\label{semiring}

Let $R$ be a (not necessarily commutative) semiring with $1$ and 
suppose $I$ is a semigroup ideal of the multiplicative semigroup of $R$ which is not contained in any proper semiring ideal of $R$. Then the translational hull of $I$ is naturally isomorphic to the multiplicative semigroup $R$, that is, $\chiO^R$ is an isomorphism.
\end{proposition}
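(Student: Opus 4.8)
The plan is to show directly that the natural morphism $\chiO^R\colon R\to\OmegaS[R](I)\subseteq\Omega(I)$, $f\mapsto(\lambda_f,\rho_f)$, is a bijection onto $\Omega(I)$. The starting observation is to unpack the hypothesis on $I$. Since $I$ is a semigroup ideal of the multiplicative semigroup of $R$, the set of all finite sums $s_1+\cdots+s_n$ with each $s_i\in I$ is closed under addition and absorbs multiplication by $R$ on both sides, so it is a semiring ideal containing $I$; since any semiring ideal containing $I$ is closed under addition, it is the smallest such. The hypothesis that $I$ lies in no proper semiring ideal therefore forces this set to be all of $R$. In particular I can fix an expression $1=s_1+\cdots+s_n$ with $s_j\in I$, and this single identity will drive the whole argument. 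Note also that since $I$ is a two-sided multiplicative ideal, $\alpha f,f\alpha\in I$ for every $f\in R$ and $\alpha\in I$, so $(\lambda_f,\rho_f)\in\OmegaS[R](I)$ automatically and $\chiO^R$ is defined on all of $R$.

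From $1=\sum_j s_j$ I would first deduce that $I$ is right $R$-reductive in the sense of Definition~\ref{defn:reductive}: if $x,y\in R$ satisfy $x\gamma=y\gamma$ for all $\gamma\in I$, then
\[x=x1=\sum_j xs_j=\sum_j ys_j=y1=y,\]
using $s_j\in I$. (Symmetrically $I$ is left $R$-reductive, though right reductivity alone will suffice.) Right $R$-reductivity immediately gives injectivity of $\chiO^R$: if $(\lambda_f,\rho_f)=(\lambda_g,\rho_g)$ then $\alpha f=\alpha g$ for all $\alpha\in I$, whence $f=g$.

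The substantive step is surjectivity. Given $(\lambda,\rho)\in\Omega(I)$, I would set
\[f=\sum_{j=1}^n s_j\rho\in R,\]
noting that each $s_j\rho\in I\subseteq R$. Using the linked identity $(s_j\rho)\gamma=s_j\lambda(\gamma)$ for the pairs $(s_j,\gamma)$, I get for every $\gamma\in I$
\[f\gamma=\sum_j(s_j\rho)\gamma=\sum_j s_j\lambda(\gamma)=\Bigl(\sum_j s_j\Bigr)\lambda(\gamma)=\lambda(\gamma),\]
so $\lambda=\lambda_f$. To obtain $\rho=\rho_f$ I would \emph{not} attempt to compute $\alpha f$ directly: this is exactly where one is tempted to distribute $\rho$ over a sum, which is illegitimate since a bi-translation need not respect addition. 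Instead I multiply on the right: for all $\alpha,\beta\in I$,
\[(\alpha f)\beta=\alpha(f\beta)=\alpha\lambda(\beta)=(\alpha\rho)\beta,\]
the last equality being the linked condition. As this holds for every $\beta\in I$, right $R$-reductivity yields $\alpha f=\alpha\rho$, and since $\alpha\in I$ was arbitrary, $\rho=\rho_f$. Hence $(\lambda,\rho)=(\lambda_f,\rho_f)=f\chiO^R$.

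Finally, because $I$ is an ideal of $R$, the map $\chiO^R$ is a natural morphism, and the two steps above show it is a bijection from $R$ onto $\Omega(I)$; therefore it is a natural isomorphism, as claimed. The main obstacle, and the crux of the argument, is the surjectivity step: the candidate $f=\sum_j s_j\rho$ recovers $\lambda$ painlessly, but recovering $\rho$ cannot be carried out by a direct computation and must instead be routed through the linked condition together with right $R$-reductivity, precisely to sidestep the failure of additivity of $\rho$.
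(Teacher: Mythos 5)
Your proof is correct and follows essentially the same route as the paper: decompose $1$ as a finite sum of elements of $I$, use this to get faithfulness/reductivity and hence injectivity, and take $f=\sum_j s_j\rho$ as the realising element. The only (cosmetic) difference is at the end: the paper dually constructs $g=\sum_j\lambda(s_j)$ with $\rho=\rho_g$ and then uses the linked condition plus faithfulness to conclude $f=g$, whereas you verify $\rho=\rho_f$ directly from the linked condition and right $R$-reductivity, which is a slight streamlining of the same idea.
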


\begin{proof}
By the assumption on the ideal we can write $1$ as a finite sum of 
elements in $I$, say $1 = a_1+\cdots+a_k$. It follows that $R$ acts faithfully by multiplication on both the left and right of $I$. It follows from this that distinct elements of $R$ determine distinct bi-translations of $I$, i.e. the map $\chiO^R$ is a natural embedding.

Conversely, if $(\lambda,\rho) \in \Omega(I)$ then, using the above decomposition of $1$, for every $c$ in $I$ we have
\begin{eqnarray*}
\lambda (c) = 1 (\lambda (c)) = (a_1+\cdots+a_k) \lambda (c) &=& a_1 \lambda (c) +\cdots + a_k 
\lambda (c)\\
 &=& (a_1 \rho )c+\cdots + (a_k \rho)c = (a_1 \rho + \cdots + a_k \rho) c
\end{eqnarray*} so $\lambda$ is just left
translation by the element $f:=a_1 \rho +\cdots +a_k \rho \in R$. A dual argument shows that $\rho$ is right translation by $g:=\lambda( a_1) + \cdots  + \lambda( a_k) \in R$. Now since $(\lambda, \rho)$ is a linked pair for all $a, b \in I$ we have $a f b = a g b $, and using again the fact that $R$ acts faithfully on $I$, we obtain $f = g$ and  $(\lambda,\rho) = (\lambda_f, \rho_f)$. Thus $\chiO^R$ is surjective.
\end{proof}

\begin{remark}
\label{matrices}Let $R$ be a semiring with $1$ and $0$, and let $M_n(R)$ denote the set of all $n \times n$ matrices with entries from $R$. It is easy to see that $M_n(R)$ is a semiring with respect to the obvious induced operations of matrix addition and matrix multiplication, and has multiplicative identity element. Taking $I$ to be a semigroup ideal of the multiplicative monoid of $M_n(S)$ that is large enough to generate $M_n(R)$ as a semiring ideal (e.g. any semigroup ideal containing the $0$-$1$ matrices with exactly one entry equal to $1$), the previous result applies to show that the translational hull of $I$ is naturally isomorphic to $M_n(R)$. Of course, $M_n(R)$ may be identified with the endomorphism monoid of the free $R$-module $R^n$. \end{remark}

Our next proposition will allow us to extend the above remark  to arbitrary free $S$-modules.

\begin{proposition}
\label{semiring+} 
Let $R$ be a semiring, $I$ a semigroup ideal of the multiplicative semigroup of $R$, and $E=\{ e_i:i\in B\}$ a set of idempotents of $I$ such that:
\begin{enumerate} \item  for every finite set $F\subseteq I$ we have
\[p=p(\Sigma_{k\in K}e_k)\]
for all $p\in F$, some finite subset $K=K_F$  of $B$;

\item 
for every subset
\[C=\{ c_i:i\in B\}\subseteq I\]
there exists $q=q_C \in R$ such that $e_ic_i=e_iq$ for all $i\in B$;

\item  $e_ia=e_ib$ for all $i\in B$, where $a,b\in R$,  gives $a=b$. 
\end{enumerate}
Then the translational hull $\Omega(I)$ is naturally isomorphic to the multiplicative semigroup $R$,  that is, $\chiO^R$ is an isomorphism.
\end{proposition}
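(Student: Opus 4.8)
The plan is to show that the natural morphism $\chiO^R\colon R\to\Omega(I)$, which is well-defined since $I$ is a two-sided ideal of the multiplicative semigroup of $R$, is both injective and surjective. Injectivity is quick: if $(\lambda_f,\rho_f)=(\lambda_g,\rho_g)$ then in particular $e_if=e_i\rho_f=e_i\rho_g=e_ig$ for every $i\in B$, whence $f=g$ by hypothesis~(3). The substance of the argument is surjectivity, and throughout I would keep in mind the lesson of Proposition~\ref{semiring}: a right or left translation of $I$ is a map of the \emph{multiplicative} semigroup only, so it cannot legitimately be applied to a sum; every manipulation of sums must instead be carried out by distributivity inside $R$, applied to elements of $I$, with the linked condition $\alpha\lambda(\beta)=(\alpha\rho)\beta$ used to pass between $\lambda$ and $\rho$.

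So fix $(\lambda,\rho)\in\Omega(I)$ and let me construct the realising element. First I would produce a candidate $f$ using hypothesis~(2) applied to the family $C=\{e_i\rho:i\in B\}\subseteq I$: this yields $f:=q_C\in R$ with $e_i(e_i\rho)=e_if$ for all $i$. Since $e_i$ is idempotent and $\rho$ is a right translation, $e_i(e_i\rho)=(e_ie_i)\rho=e_i\rho$, so in fact
\[e_i\rho=e_if\qquad\text{for all }i\in B.\]

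It then remains to verify that $f$ realises the whole bi-translation. For $\lambda$: given $c\in I$, the linked condition gives $e_i\lambda(c)=(e_i\rho)c=(e_if)c=e_i(fc)$ for every $i\in B$, so hypothesis~(3) forces $\lambda(c)=fc$; hence $\lambda=\lambda_f$. For $\rho$: feeding $\lambda=\lambda_f$ back into the linked condition yields $(\alpha\rho)\beta=\alpha\lambda(\beta)=(\alpha f)\beta$ for all $\alpha,\beta\in I$, and I would finish by showing $I$ is right reductive via hypothesis~(1): writing $u=\alpha\rho$ and $v=\alpha f$ (both in $I$), choose by~(1) a single finite $K\subseteq B$ with $u=u\sum_{k\in K}e_k$ and $v=v\sum_{k\in K}e_k$; distributing and using $ue_k=ve_k$ (valid as each $e_k\in I$) gives $u=v$, that is $\alpha\rho=\alpha f$. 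Thus $\rho=\rho_f$ and $(\lambda,\rho)=f\chiO^R$.

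The main obstacle I anticipate is precisely bookkeeping the non-additivity of the translations: the temptation is to ``distribute $\rho$ over $\sum_{k}e_k$'', which is illegitimate, and the whole design of the three hypotheses is to avoid ever doing so --- (2) manufactures $f$ from the single values $e_i\rho$, (3) recovers equalities in $R$ from their left multiples by the $e_i$, and (1) supplies the local finite ``partial identities'' needed for right reductivity. Once it is clear which hypothesis discharges which of these roles, the remaining computations are routine.
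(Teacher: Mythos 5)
Your proof is correct, and it reorganises the argument in a way that genuinely differs from the paper's. The paper establishes $\rho=\rho_q$ first, by a direct computation of $x\rho$ for an arbitrary $x\in I$: it invokes hypothesis (1) twice (once for $x$ itself and once for the finite set $\{x\rho\}\cup\{e_k\rho:k\in K\}$), producing a double sum $\sum_{l\in L}\sum_{k\in K}x(e_k\rho)e_l$ which is then collapsed via distributivity, idempotency of the $e_k$ and hypothesis (2); only afterwards is $\lambda=\lambda_q$ read off from the linked condition and (3). You instead begin by pinning down $e_i\rho=e_if$ for all $i$ --- using (2) together with the observation $e_i(e_i\rho)=(e_ie_i)\rho=e_i\rho$, which the paper leaves implicit --- then deduce $\lambda=\lambda_f$ at once from the linked condition and (3), and only then return to $\rho$, where hypothesis (1) enters exactly once in the form of a right-reductivity argument: for $u=\alpha\rho$ and $v=\alpha f$, both in $I$, a single finite $K$ from (1) gives $u=\sum_{k\in K}ue_k=\sum_{k\in K}ve_k=v$. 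Every step checks out: you never apply $\rho$ or $\lambda$ to a sum, the family $\{e_i\rho:i\in B\}$ does lie in $I$ so (2) applies, and $\alpha f\in I$ because $I$ is an ideal, so (1) applies to $\{u,v\}$. Your route buys a cleaner separation of the roles of the three hypotheses and avoids the double-summation bookkeeping; the paper's route buys a derivation of $\rho=\rho_q$ that is self-contained and does not pass through $\lambda$ first. The injectivity argument via (3) coincides with the paper's.
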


\begin{proof} 
 Let $(\lambda,\rho)\in \Omega(I)$ and let $C=\{ e_i\rho:i\in B\}$. Let  $q$ be the element guaranteed by (2) and let 
$x\in I$. By (1) we have
\begin{equation}
\label{sub}
x=x(\Sigma_{k\in K}e_k)
\end{equation}
for some finite set $K\subseteq B$. Now take the finite subset $F=\{ x\rho, e_k\rho: k\in K\} \subseteq I$, 
and set $L=K_F$ as given by (1).
Then since $x\rho \in F$ by using (1), distributivity in $R$ and the fact that $(\lambda, \rho)$ is a bi-translation we have:

$$x\rho = x\rho \Sigma_{l\in L}e_l =\Sigma_{l\in L}(x\rho)e_l =\Sigma_{l\in L}x\lambda(e_l).
$$
Substituting the expression for $x$ given in \eqref{sub} into the right-hand side of the above, using distributivity and the fact that $(\lambda, \rho)$ is a bi-translation once more then gives:
$$x\rho=\Sigma_{l\in L}(x(\Sigma_{k\in K}e_k)\lambda(e_l)=\Sigma_{l\in L}\Sigma_{k\in K}xe_k\lambda(e_l)=\Sigma_{l\in L}\Sigma_{k\in K}x(e_k\rho)e_l.$$
By using distributivity once more, we see that the right hand sum can be rearranged as follows, upon which we may utilise (1) again since each $e_k \rho \in F$:
$$x\rho=\Sigma_{k\in K}x(e_k\rho(\Sigma_{l\in L}e_l))=\Sigma_{k\in K}x(e_k\rho).$$
Now since each $e_k$ is idempotent and $\rho$ is a right translation, the right-hand side can be re-written as follows:
$$x\rho=\Sigma_{k\in K}xe_k(e_k\rho).$$
Using the fact that each $e_k \rho \in C$, by (2) we have that $e_k(e_k\rho) = e_kq$, and so by using distributivity and \eqref{sub} once more we have
$$x\rho=\Sigma_{k\in K}xe_kq=x(\Sigma_{k\in K}e_k)q=xq.$$

Notice that the element $q$ is independent of the element $x$ and so the above argument shows that $x\rho=xq$ for all $x \in I$. That is, $\rho$ is the right translation $\rho_q$. Now for every $y\in I$ we have
\[e_i\lambda(y)=(e_i\rho)y=e_iqy\]
for every $i\in B$, so that $\lambda(y)=qy$ by (3). That is, $\lambda$ is the left translation $\lambda_q$. Thus we have shown that for every bi-translation $(\lambda,\rho)$ there exists an element $q \in R$ such that $(\lambda, \rho)=(\lambda_q,\rho_q)$, and by using (3) again it is clear that $q$ is unique. Thus $\Omega(I)$ is naturally isomorphic to $R$.
\end{proof}

\begin{remark}
	\label{additive}
	Suppose that $A$ is an algebra with a binary operation $+$ such that $(A,+)$ is a commutative monoid with identity element $0_A$, then $\en(A)$ becomes a semiring with respect to the operations of composition and point-wise addition (i.e. $a(\alpha+\beta) = a\alpha +a \beta$ for all $a \in A$). The identity map is the identity of $\en(A)$ whilst the zero map ($a \mapsto 0_A$ for all $a \in A$) is a zero element for $\en(A)$, and so one may apply the previous result to this situation.
\end{remark}

\begin{remark} Let $S$ be a semiring, $F$ a free $S$-module and $R = \en(F)$. By the previous remark we may regard $R$ as a semiring. Let $\{ z_i:i\in B\}$ be a basis for  $F$ and let $e_i\in R$ be defined by
\[z_je_i=\delta_{ij}z_j.\]
Now let  $I$ be  any semigroup ideal of $R$ containing only morphisms with image contained in a finitely generated subsemimodule,  and including the subset $E=\{ e_i:i\in B\}$. Let $C=\{ c_i:i\in B\}\subseteq I$ and define $\alpha\in R$ by $z_i\alpha=z_ic_i$. Then the conditions of the proposition hold for this $E$, giving that the translational hull $\Omega(I)$ is naturally isomorphic to the endomorphism monoid $\en(F)$.
\end{remark}

\subsection{Finite symmetric groups}
\label{sec:Sn}

As a very different application of our earlier results, we conclude the paper 
 by considering in detail the case where $I$ is an ideal of the endomorphism monoid of a finite symmetric group $\S_n$ with identity element denoted by $e$. In what follows we shall describe the ideal structure of $\en(\S_n)$ in the case where $n>2$ and $n$ is not $4$ or $6$ so that the kernel of any endomorphism of $\S_n$ must be exactly one of $\{e\}$, $\mathcal{A}_n$ or $\S_n$, and, all automorphisms are inner. We leave the determination of the ideal structure of $\en(\S_n)$ in each of the remaining cases as an exercise for the interested reader; this can be computed in a very similar fashion, upon noting the following differences: case $n=1$ is trivial; for $n=2$ the alternating group coincides with the trivial group, so there are only two possible kernels of an endomorphism; for $n=4$ there is another normal (Klein $4$) subgroup, and hence another possible kernel; and for $n=6$ there are outer automorphisms.
 
 Clearly the only endomorphism with kernel $\S_n$ is $\phi_e\colon \S_n\rightarrow \{e\}$, and this is a zero of $\en(\S_n)$. Since all automorphisms are inner, the endomorphisms with trivial kernel are precisely the maps $\psi_s\colon \sigma \rightarrow s^{-1}\sigma s$ where $s \in \S_n$. The endomorphisms with kernel $\mathcal{A}_n$ are precisely the maps $\phi_t$ indexed by elements $t$ of order $2$ in $\S_n$  and defined by $s \phi_t = e$ if $s \in \mathcal{A}_n$  and $s \phi_t = t$ if $s \in \mathcal{O}_n:=\S_n\setminus\mathcal{A}_n$. It shall be convenient to write $D = \{t \in \S_n: t^2=e\}$.
With notation as above it is straightforward to check that:
\begin{enumerate}
	\item $\psi_{s}\psi_t = \psi_{st}$ for all $s,t \in \S_n$;
	\item $\phi_t\psi_s = \phi_{s^{-1}ts}$ for all $s \in \S_n$, $t \in D$;
	\item $\psi_s\phi_t= \phi_t$ for all $s\in \S_n$, $t \in D$;
	\item If $s \in D$ is even: $\phi_s\phi_t = \phi_e$ for all $t \in D$;
	\item If $s \in D$ is odd: $\phi_s\phi_t = \phi_t$ for all $t \in D$.
\end{enumerate}
It is convenient here to use the notion of left and right divisibility of elements, encoded by Green's relations $\mathcal{R}$ and $\mathcal{L}$, and their associated pre-orders; we refer the reader to a standard text such as \cite{Howie:1995}  for the details.
Clearly the automorphisms $\psi_s$ form the group of units of $\en(\S_n)$ and this is isomorphic to $\S_n$.  It follows from (2) that if $t, r \in D$ are elements of the same cycle structure then $\phi_t$ and $\phi_r$ are $\mathcal{R}$-related.  Notice that for $s \in D$ even, it follows from (4) and the fact that $\phi_e$ is a zero of the monoid that no other elements are $\mathcal{R}$-related to $\phi_s$.  On the other hand, if $s \in D$ is odd then it follows from (5) that $\phi_s$ is $\mathcal{R}$-related to every $\phi_t$ where $t \in D$ is odd. Moreover, each $\phi_s$ with $s \in D$ odd is $\mathcal{R}$-above every $\phi_t$ with $t \in D$ even. Finally, notice that by (3), (4) and (5), regardless of whether $t \in D$ is odd or even the $\mathcal{L}$-class of each $\phi_t$ is a singleton.

The egg-box diagram of this endomorphism monoid is therefore as shown in Figure~\ref{fig1}. Writing $K$ to denote the set of all $\phi_p$ with $p \in D$ and $p$ even, and $E$ to denote the set of all $\phi_p$ with $p \in D$ with $p$ odd, notice that $\phi_e$ is a zero element, $K$ is a null semigroup, $E$ is right-zero semigroup, and each element of $E$ acts as a left identity on $\en(\S_n) \setminus {\rm Aut}(\S_n) = E \cup K$.
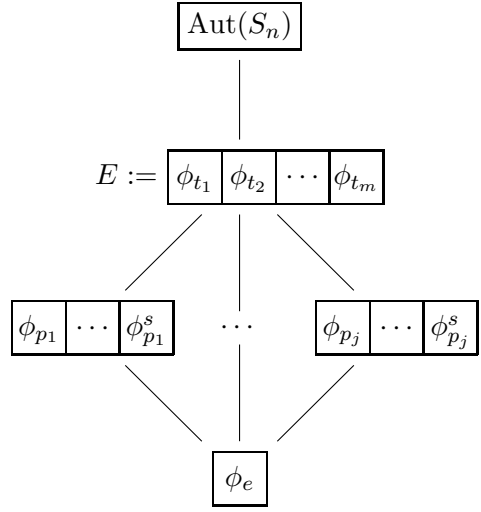
\begin{figure}
	\label{fig1}
\begin{center}
\begin{tikzpicture}
\node (aut) at (0,4) {\framebox{${\rm Aut}(S_n)$}};
\node (a) at (0,2) {$E:=$\ytableausetup{centertableaux, boxsize=1.8em}
	\begin{ytableau}
	\phi_{t_1} & \phi_{t_2} & \cdots & \phi_{t_m}\\
	\end{ytableau}};
\node (d) at (-2,0) {\ytableausetup{centertableaux}
	\begin{ytableau}
	\phi_{p_1} & \cdots & \phi_{p_1}^s\\
	\end{ytableau}};
\node (e) at (0,0) {$\cdots$};
\node (f) at (2,0) {\ytableausetup{centertableaux}
	\begin{ytableau}
	\phi_{p_j} & \cdots & \phi_{p_j}^s\\
	\end{ytableau}};
\node (zero) at (0,-2) {\begin{ytableau}
	\phi_e\\
	\end{ytableau}};
\draw (zero) -- (d) -- (a) -- (aut);
\draw (zero) -- (e) -- (a);
\draw (zero) -- (f) -- (a);
\end{tikzpicture}
\end{center}
\caption{The egg-box diagram of $\en(\S_n)$. The automorphism group ${\rm Aut}(S_n)=\{\psi_s: s \in \S_n\}\cong \S_n$ is the $\mathcal{H}$-class of the identity element (i.e.\ the group of units);  $E:=\{\phi_{t_1}, \ldots, \phi_{t_m}\}$ is a single $\mathcal{R}$-class containing all (idempotent) endomorphisms $\phi_{t}$ with $t \in S_n$ odd and of order $2$; the set of all $\phi_p$ with $p \in S_n$  even and of order $2$ decomposes as a disjoint union of $\mathcal{R}$-classes (one for each conjugacy class of even elements of order $2$) of the form $\{\phi_p^s: p \in S_n \mbox{ even of order } 2, s \in \S_n\}$, where we write simply $\phi_p^s$ to denote $\phi_{s^{-1}ps}$, and take $p_1, \ldots, p_j$ to be a transversal of conjugacy classes of even order $2$ elements; and  $\{\phi_e\}$ is the minimal ideal. }
\end{figure}

Consider now an ideal $I$ of $\en(\S_n)$. This must be a union of $\mathcal{R}$-classes that is downwards closed with respect to the $\mathcal{R}$-order. We now consider all possible ideals $I$ of $\en(\S_n)$ and calculate the translational hulls.\\

\noindent
\textbf{The case \boldmath{$I=\en(\S_n)$}}. As noted above (see Example \ref{full}), it is clear that  $$\Omega(I) = \big\{(\lambda_f, \rho_f): f \in \en(\S_n)\big\} \cong \en(\S_n).$$ However, as we shall see below, there is no proper ideal $I$ with $\Omega(I) \cong \en(\S_n)$.

\medskip
\noindent
\textbf{The case \boldmath{$I=\en(\S_n) \setminus {\rm Aut}(\S_n)$}}. Noting that $I = \{ \phi_t: t \in D \}$ it is clear from the definition of the endomorphisms $\phi_t$ that the pair $\S_n$ is $I$-representable, but is not $I$-separable, since if permutations $a$ and $b$ have the same sign, then they cannot be separated by any element of $I$. It is also easy to see that $I^2=I$ in this case. Now, noting that $\S_n$ has a generating set $X$ consisting of all transpositions, and for each $t \in D = \bigcup_{\gamma \in I} \im\gamma$ there exists an endomorphism $\phi_t \in I$ with $x \phi_t = t$ for all $x \in X$, Proposition \ref{prop:suffmaps} immediately gives that $\Omega(I) = \{(\lambda_f, \rho_f):f \in T(\S_n, I)\}$.

To determine the elements of $T(\S_n, I)$, observe that if $f\in \mathcal{T}_{\mathcal{S}_n}$, then $fI\subseteq I$ if and only if
either $f:\mathcal{A}_n\rightarrow \mathcal{A}_n$ and $f:\mathcal{O}_n\rightarrow \mathcal{O}_n$, or $f:\mathcal{S}_n\rightarrow \mathcal{A}_n$; let us refer to this property of $f$ as being {\em strongly parity preserving}. We leave the reader to check
these are precisely the maps in $\mathcal{T}_{\mathcal{S}_n}$ we would obtain via the process given in Proposition~\ref{prop:lift} (2). On the other hand, $If\subseteq I$ if and only if 
$ef=e$ and $Df\subseteq D$. Thus 
\[T(\S_n, I)=\{ f\in  \mathcal{T}_{\mathcal{S}_n}: ef=e, Df\subseteq D\mbox{ and }f\mbox{ is strongly parity preserving}\}.\] 
Returning to Proposition \ref{prop:suffmaps}, we have that $\Omega(I)\cong T(\S_n, I)/{\equiv_{\im}}$. We have observed  $\bigcup_{\gamma\in I}\im\gamma=D$
and so for $f,g\in T(\S_n, I)$ we deduce $f\equiv_{\im} g$ if and only if $Df=Dg$. 

\medskip
\noindent\textbf{The remaining cases}. Suppose now that $I$ is any ideal generated by some collection of $\phi_t$ where each $t \in D$ is even. In this case $I$ is a null semigroup, and $\S_n$ is neither $I$-representable nor $I$-separable (the image of each map in $I$ consists of even permutations, and so $\bigcup_{\alpha\in I} \im\alpha \subseteq A_n$, and just as in the previous case, elements of the ideal cannot separate elements of $\S_n$ with the same sign). Since $I$ is null, as noted in Example \ref{null} we have that the translational hull of $I$ is
$$\Omega(I) = \big\{(\lambda, \rho): \lambda, \rho: I \rightarrow I, \lambda\phi_e=\phi_e=\phi_e \rho\big\}.$$
Fixing any right translation $\rho$  it is straightforward to verify (in a similar manner to the previous case) that $\rho = \rho_f$ for any function $f: \S_n \rightarrow \S_n$ with the property that $\phi_s \rho = \phi_t$ implies $sf=t$. On the other hand, not all left translations are realised by mappings $f: \S_n \rightarrow \S_n$. Indeed, let $s, t, e$ be distinct elements of $\S_n$ and consider any map $\lambda: I \rightarrow I$ satisfying  $\lambda(\phi_s) = \phi_t$  and $\lambda(\phi_e) = \phi_e$. As in Example \ref{null},  $\lambda$ is a left translation. However, for any $f: \S_n \rightarrow \S_n$ it is easy to see that $\lambda \neq \lambda_f$. Indeed, the image of $\lambda_f(\phi_s) = f\phi_s$ is contained in $\{e, s\}$, whilst the image of $\lambda(\phi_s) = \phi_t$ is precisely $\{e, t\}$. Thus in this case, not all left translations are realised by mappings of $\S_n$.

\begin{remark}
Another example where we can encounter left and right translations which are linked but are not coming from mappings on the underlying algebra is that of $\en(\fullTA[n])$.  The ideal structure of $\en(\fullTA[n])$ may be found in \cite{Gould:2023,Grau:2023} and discussions of the translational hulls of these ideals  in \cite{Grau:2023}.
\end{remark}

\section*{Acknowledgements} The authors are grateful to Stuart Margolis, who, by asking us  disparate and well motivated questions, prompted us to make this investigation. Visits to York made by the third author were supported by both the University of Manchester and the University of York. The second author was supported by a PhD studentship from the University of York; a number of our results also appear as joint work in Chapters V-VII of his thesis \cite{Grau:2023}.

\end{document}